\newcommand{\rperf}[2]{\operatorname{RPerf}(#1 \into #2)}
\def\op{\text{op}}
\def\can{\operatorname{can}}
\def\rrperf{\operatorname{RPerf}}
\def\tF{\widetilde{F}}
\def\naive{{\text{naive}}}
\def\exact{{}}
\def\cube#1#2#3#4#5#6#7#8{
& #5 \ar[rr] \ar[dl] \ar@{-}[d] && #6 \ar[dd] \ar[dl] \\
#1 \ar[rr] \ar[dd]  & \ar[d] & #2 \ar[dd] \\
& #7 \ar@{-}[r] \ar[dl] & \ar[r] & #8 \ar[dl] \\
#3 \ar[rr] && #4 \\
}
\def\TP{\operatorname{TPC}}
\def\HomMF{\operatorname{\uHom_{MF}}}
\def\rH{\operatorname{H}}
\def\rHom{\operatorname{Hom}}
\def\rExt{\operatorname{Ext}}
\def\coh{\operatorname{coh}}
\def\vf{\mathbf f}
\def\cU{\mathcal U}
\def\sing{{\operatorname{Sing}}}
\def\cone{\operatorname{cone}}
\def\cP{\mathcal P}
\def\cE{\mathcal E}
\def\cL{\mathcal L}
\def\cC{\mathcal C}
\def\cF{\mathcal F}
\def\cG{\mathcal G}
\def\cO{\mathcal O}
\def\cM{\mathcal M}
\def\cN{\mathcal N}
\def\L{\mathbf{L}}
\def\coker{\operatorname{coker}}
\def\Sing{\operatorname{Sing}}
\def\Proj{\operatorname{Proj}}
\def\sExt{\operatorname{\widehat{Ext}}}
\def\Spec{\operatorname{Spec}}
\def\map#1{{\buildrel #1 \over \lra}} 
\def\lra{\longrightarrow}
\def\into{\hookrightarrow}
\def\onto{\twoheadrightarrow}
\def\cH{\mathcal{H}}
\newcommand{\bP}{\mathbb{P}}
\newcommand{\A}{\mathbb{A}}
\newcommand{\bH}{{\mathbb{H}}}
\newcommand{\G}{\mathbb{G}}
\newcommand{\R}{{\mathbf{R}}}
\newcommand{\F}{\mathbb{F}}
\newcommand{\E}{\mathbb{E}}
\newcommand{\bF}{\mathbb{F}}
\newcommand{\bE}{\mathbb{E}}
\newcommand{\bG}{\mathbb{G}}
\newcommand{\Z}{\mathbb{Z}}
\newcommand{\N}{\mathbb{N}}
\newcommand{\fm}{{\mathfrak m}}
\numberwithin{equation}{section}
\theoremstyle{plain} 
\newtheorem{thm}[equation]{Theorem}
\newtheorem{introthm}{Theorem}
\newtheorem*{introthm*}{Theorem}
\newtheorem*{question}{Question}
\newtheorem{cor}[equation]{Corollary}
\newtheorem{lem}[equation]{Lemma}
\newtheorem{prop}[equation]{Proposition}
\theoremstyle{definition}
\newtheorem{defn}[equation]{Definition}
\newtheorem{ex}[equation]{Example}
\theoremstyle{remark}
\newtheorem{rem}[equation]{Remark}
\newtheorem*{ack}{Acknowledgements}
\def\TP{\operatorname{TPC}}
\def\HomMF{\operatorname{\uHom_{MF}}}
\def\rDsg{\mathsf{D}^{\mathsf{rel}}_{\mathsf{sg}}}
\def\Dsg{\mathsf{D}_{\mathsf{sg}}}
\def\Dsing{\Dsg}
\def\Db{\mathsf{D}^\mathsf{b}}
\def\D{\mathsf{D}}
\def\Perf{\operatorname{Perf}}
\def\coh{\operatorname{coh}}
\newcommand\bs{\boldsymbol}
\newcommand{\Ext}[4]{\operatorname{Ext}_{#2}^{#1}(#3,#4)}
\newcommand{\Hom}[3]{\operatorname{Hom}_{#1}(#2,#3)}
\newcommand{\supp}{\operatorname{supp}}
\newcommand{\brc}[2]{\lbrace \, #1 \, | \, #2 \rbrace}
\newcommand{\li}{ < \infty}
\newcommand{\xra}[1]{\xrightarrow{#1}}
\newcommand{\xla}[1]{\xleftarrow{#1}}
\newcommand{\ps}[1]{\mathbb{P}_{#1}^{\text{c}-1}}
\newcommand{\YQ}{\gamma}
\newcommand{\RY}{\beta}
\newcommand{\RQ}{\delta}
\newcommand{\pd}{\operatorname{pd}}
\def\uHom{\mathcal{H}\!\operatorname{om}}
\def\uExt{\mathcal{E}\!\operatorname{xt}}
\def\suExt{\widehat{\uExt}}
\begin{document}

\title{Matrix factorizations in higher codimension}
\author{Jesse Burke}
\address{Department of Mathematics\\ 
Universit\"at Bielefeld\\ 
33501 Bielefeld\\ 
Germany}
\email{jburke@math.uni-bielefeld.de}
\author{Mark E. Walker}
\address{Department of Mathematics \\
University of Nebraska\\
Lincoln, NE 68588
}
\email{mwalker5@math.unl.edu}

\begin{abstract} 
We observe that there is an equivalence between the singularity category of an
affine complete intersection and the homotopy category of matrix factorizations over a
related scheme. This relies in part on a theorem of Orlov. Using this
equivalence, we give a geometric
construction of the
ring of
cohomology operators, and a generalization of the theory of support
varieties, which we call stable support sets. We settle a question of
Avramov about which stable support
sets can arise for a given complete intersection ring. We also use
the equivalence to construct a projective resolution of a module over a complete intersection ring from a matrix
factorization, generalizing
the well-known result in the hypersurface case. 
\end{abstract}


\maketitle

\tableofcontents

\section{Introduction}
Given an element $f$ in a commutative local ring $Q$, a {\em matrix
  factorization} of $f$ is a pair of $n \times n$ matrices $(A, B)$ such
that $AB = f\cdot I_n= BA$. This construction was introduced by
Eisenbud to study modules over the factor ring $R = Q/(f)$. Indeed, in the case
$Q$ is regular local and $f \ne 0$, he showed \cite[Theorem 6.1]{Ei80} that
for any finitely generated
$R$-module $M$, the minimal
free resolution of the $d$-th
syzygy of $M$, where $d$ is the Krull dimension of $Q$, is of the form
\[ \cdots \to R^n \xra{B \otimes_Q R} R^n \xra{A \otimes_Q R} R^n \xra{B \otimes_Q R} R^n \to 0, \]
for some matrix factorization $(A,B)$. Thus,
matrix factorizations describe the category of finitely generated
$R$-modules, ``up to finite projective dimension.'' This was further clarified by Buchweitz in \cite{Bu87} where he noted that
there is an equivalence of categories between the homotopy category
of matrix factorizations and the quotient of the bounded derived category of finitely
generated $R$-modules by perfect complexes:
\begin{equation} \label{L826}
[MF(Q,f)] \xra{\cong}  \Db(R)/\Perf(R) =: \Dsg(R).
\end{equation}
This functor sends a matrix factorization $(A,B)$ to 
$\coker A$, regarded as an object of $\Dsg(R)$.
 Here, the \emph{homotopy category} of matrix
factorizations is defined analogously to the homotopy category of
complexes of modules. Also, a complex is \emph{perfect} if it is
isomorphic in $\Db(R)$ to a bounded complex of finitely generated
projective $R$-modules. We call $\Dsg(R)$ the
\emph{singularity category} of $R$, following \cite{MR2101296}.

In the equivalence \ref{L826}, the right-hand side
is well defined for any ring $R$.

\begin{question} \label{question}
What should the left-hand side of \eqref{L826} be when $R$
is a complete intersection --- i.e., when $R = Q/(f_1, \ldots, f_c)$ for
a regular ring $Q$ and a $Q$-regular sequence $f_1, \ldots, f_c$?
\end{question}

For a complete intersection $R = Q/(f_1, \ldots, f_c)$
we refer to $c$ as the codimension of $R$ (technically $c$ is the codimension of the \emph{presentation} $Q/(f_1, \ldots,
f_c)$ --- but we fix a presentation throughout); complete intersections of codimension 1 are called
\emph{hypersurfaces}. In this paper we propose an answer for
higher codimension complete intersections and investigate the
consequences. To state it we introduce some notation.
If $X$ is a scheme, $\cL$ a line bundle on $X$, and $W$ a regular global
section of $\cL$, then a \emph{matrix factorization} of
the  triple
  $(X, \cL, W)$ consists of a pair of locally free coherent sheaves $\cE_1,
\cE_0$ on $X$ and maps
\[ \cE_1 \xra{e_1} \cE_0 \xra{e_0} \cE_1 \otimes \cL\]
such that $e_0 \circ e_1$ and $(e_1 \otimes 1_\cL) \circ e_0$ are both
multiplication by $W$. The 
{\em homotopy
category of matrix
factorizations}, written $[MF(X, \cL, W)]$, has these matrix factorizations
as objects. To define the morphisms in this category one starts out as
in the affine case, but some adjustment is needed to deal with the higher cohomology
of the locally free sheaves involved. In Section \ref{setup} we recall the
full definition of morphisms and give details on the following:
\begin{introthm} \label{introthm1}
Let $Q$ be a regular ring of finite Krull dimension, $f_1, \ldots, f_c$ a
$Q$-regular sequence, and  $R = Q/(f_1, \dots, f_c)$. Let $X = \ps Q = \Proj( Q[T_1,
\ldots, T_c] )$,
$W = \sum_i f_i T_i \in \Gamma(X, \cO_X(1))$, and define
$$
Y = \Proj(Q[T_1, \dots, T_c]/(W)) \into X,
$$ 
to be the zero
subscheme of $W$. There are equivalences of
triangulated categories
\[
\xymatrix@C=4em{[MF(\ps Q, \cO(1), W)] \ar[r]_(.6){\coker}^(.6)\cong & \Dsg(Y)
  \ar[r]_\Phi^\cong & \Dsg(R).}
\]
\end{introthm}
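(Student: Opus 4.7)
I would approach the theorem by establishing each of the two equivalences separately.

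For the first equivalence $[MF(\ps{Q}, \cO(1), W)] \xra{\coker} \Dsg(Y)$, I would invoke the projective, line-bundle-twisted generalization of Buchweitz's theorem: if $W$ is a regular section of a line bundle $\cL$ on a regular scheme $X$ with zero locus $Y$, then $[MF(X, \cL, W)] \xrightarrow{\cong} \Dsg(Y)$ via the cokernel functor. This is due to Orlov in various forms and has been further developed by Positselski, Lin--Pomerleano--Segal, Polishchuk--Vaintrob, and Isik. The sheaf $\coker(e_1)$ is naturally an $\cO_Y$-module because $e_0$ exhibits multiplication by $W$ as null-homotopic on it. Well-definedness, full faithfulness, and essential surjectivity reduce affine-locally to the classical Buchweitz equivalence via a standard MCM-approximation argument; the cohomological correction built into the definition of morphisms in $[MF]$, flagged by the authors in the excerpt, is precisely what is needed to match $\rHom$-sets globally.

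For the second equivalence $\Phi: \Dsg(Y) \xra{\cong} \Dsg(R)$, the geometric setup to exploit is the closed subscheme $Z := \ps{R} = Y \times_{\Spec Q} \Spec R \hookrightarrow Y$. The scheme $Z$ is a projective bundle $\pi: Z \to \Spec R$, and a Jacobian calculation (the $T_i$-partials of $W$ are $f_i$) shows that the singular locus of $Y$ is set-theoretically contained in $Z$: it consists of points lying over the singular locus of $\Spec R$ where an additional rank condition on the Jacobian of $(f_1,\ldots,f_c)$ is satisfied. In particular $Y \setminus Z$ is regular, so by the localization sequence for singularity categories $\Dsg(Y)$ is generated by complexes set-theoretically supported on $Z$. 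I would define $\Phi$ as the composition $\mathbb{R}\pi_* \circ \mathbb{L} i^*$ for the inclusion $i: Z \hookrightarrow Y$, with candidate quasi-inverse given on MCM $R$-modules $M$ by an explicit sheaf on $Y$ restricting to $\pi^*M$ on $Z$.

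The main obstacle will be proving that $\Phi$ is an equivalence. In the bounded derived category $\Db(Z)$, Beilinson's semi-orthogonal decomposition $\Db(Z) = \langle \pi^*\Db(R)(k) \rangle_{k=0}^{c-1}$ shows that $\pi_*$ is far from fully faithful, so the point must be that passage to the ambient singularity category $\Dsg(Y)$ collapses this decomposition: the twists $\cO_Z(k)$ all become stably equivalent to $\cO_Z$ because, on $Y$, the relation $W = \sum f_i T_i = 0$ in $\Gamma(Y, \cO_Y(1))$ produces a twisted Koszul-type resolution relating $\cO_Z(k)$ to $\cO_Z$ modulo perfect complexes. An $\rExt$-calculation combining this resolution with the projective bundle formula should deliver full faithfulness of $\Phi$; essential surjectivity then follows because $\Dsg(R)$ is generated by MCM modules, each realized as $\Phi$ applied to an explicit coherent sheaf on $Y$.
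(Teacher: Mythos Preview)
Your treatment of the first equivalence is correct and matches the paper: both invoke the non-affine generalization of Buchweitz's theorem due to Polishchuk--Vaintrob, Orlov, Positselski, and the authors' own earlier work.

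For the second equivalence your approach diverges from the paper's in two respects. First, a minor point: the paper's functor is $\Phi = \R\pi_*\circ\beta^\flat$, not $\R\pi_*\circ\L\beta^*$. Since $\beta^\flat \cong \L\beta^*(-)\otimes\omega_{Z/Y}[-(c-1)]$ and the twist and shift are autoequivalences of $\Dsg$, your choice is not wrong, but it hides the adjunction that actually drives the proof.

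Second, and more substantively, the paper does not argue via localization or collapse of a semi-orthogonal decomposition. Following Orlov, it sets $\Phi_Z = \beta_*\pi^*:\Db(R)\to\Db(Y)$ with right adjoint $\Phi_Z^*=\R\pi_*\beta^\flat$, shows the unit $\mathrm{id}\to\Phi_Z^*\Phi_Z$ is an isomorphism (this is where the projective-bundle formula enters, as a direct computation rather than as a semi-orthogonal decomposition), checks both functors preserve relative perfection, and then proves $\bar\Phi_Z^*$ has trivial kernel on singularity categories by reducing via matrix factorizations to a local algebra lemma: a module over a hypersurface $B/(f)$ which becomes free after quotienting by a further regular sequence was already free. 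The equivalence then follows from the formal fact that an adjoint pair with one side fully faithful and the other having trivial kernel consists of mutual inverses.

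Your localization observation that $Y\setminus Z$ is regular is correct but only gives generation of $\Dsg(Y)$ by objects supported on $Z$, not the equivalence with $\Dsg(R)$. The intuition that the Beilinson pieces $\cO_Z(k)$ collapse in $\Dsg(Y)$ is right, but turning it into a proof of full faithfulness requires essentially the same unit computation Orlov performs; the phrase ``an $\rExt$-calculation \ldots\ should deliver full faithfulness'' is exactly where the content of the argument lies, and you have not supplied it.
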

The first equivalence of this theorem has been proven in various
contexts by various authors in
\cite{1101.4051, Polishchuk:2010ys, 1102.0261}. The version we use
here is from our
previous paper \cite{BW11a}. The
second equivalence is essentially due to Orlov
\cite[Theorem 2.1]{MR2437083}. In Appendix \ref{appendix}
we provide a slight generalization of \emph{loc}.\ \emph{cit}.\ removing the
assumption that $Q$ is equicharacteristic and, more significantly, that
$Q$ is regular. (When $Q$ is not regular, $\Dsg$ is replaced by the
``relative singularity category''.)
This allows us to state many results in the paper in a
``relative'' form in which $Q$ is not necessarily regular but rather that the modules involved have finite
projective dimension over $Q$.

The composition of the equivalences in Theorem \ref{introthm1}, which we
write 
\[
\Psi = \Phi \circ \coker: [MF(\ps Q, \cO(1), W)] \xra{\cong}\Dsg(R),
\] 
provides an answer to Question \ref{question}. 
We  note here that matrix
factorizations of $W$ were (perhaps first) used in the proof of
\cite[3.2]{AvBu00} and their relation to MCM $R$-modules is currently
being studied in work of Buchweitz, Pham, and Roberts --- see
\cite{MR2849508}.
Also related is work of Isik \cite{1011.1484}. In this paper
we show that the above
equivalence and non-affine matrix factorizations provide a new and useful perspective on many aspects of homological
algebra over complete intersection rings. These results are
naturally stated in terms of the \emph{stable $\rExt$-modules} of finitely generated $R$-modules $M,N$,
defined as
\[ 
\sExt_R^n(M,N) := \Hom {\Dsing(R)} {M[-n]} N, \, \text{ $n \in \Z$.}
\] 
These modules are also known as the {\em stable cohomology}, or the {\em Tate cohomology}, of $M$ and
$N$. When $R$ is Gorenstein it is a well known result of Buchweitz
that they may be computed
using a complete resolution of $M$; see \cite{Bu87} and also 
Appendix \ref{compl_resl_sec}. The term ``stable'' reflects the facts that $\sExt_R^n(M,N)$ is zero if
$M$ or $N$ has finite projective dimension and that there are natural 
isomorphisms $\Ext n R M N \cong \sExt_R^n(M,N)$ for $n \gg 0$. 

When $R$ is a hypersurface it
follows from
\eqref{L826} that there are natural isomorphisms \[\sExt_R^n(M,N) \cong \sExt_R^{n+2}(M,N)\] for all $n
\in \Z$. For complete
intersections of higher codimension the stable $\rExt$-modules are
not necessarily
two-periodic in this sense. 
They are, however,  given as the sheaf cohomology modules of certain
sheaves on $Y$ that do satisfy a ``twisted'' periodicity, as we now
explain.
For finitely generated $R$-modules $M$ and $N$, let $\cM = \beta_* \pi^*(M)$ and
$\cN = \beta_* \pi^*(N)$, where $\pi : \ps R \to \Spec(R)$ is the
canonical projection and $\beta: \ps R \into Y$ is the evident closed
immersion. (Here, we identify $M$ and $N$ with coherent sheaves on
$\Spec(R)$ as usual.) 
We show in 
Section \ref{sec_eis_ops} (see Corollary
\ref{isom_ext_global_sections_hom}
and Proposition \ref{prop:nat_trans_ext_stable}) 
that for $n \gg 0$ there are natural isomorphisms
\begin{equation}
\label{intro_eqation}
\begin{aligned}
\sExt_R^{n}(M,N) & \cong \Gamma( Y , \uExt_{\cO_Y}^n( \cM, \cN )) \\
\uExt_{\cO_Y}^{n}(\cM, \cN)(1) & \cong \uExt_{\cO_Y}^{n+2}(\cM, \cN),
\end{aligned}
\end{equation}
where $\uExt_{\cO_X}^n(\cM, \cN)$ denotes the coherent sheaf
satisfying 
$$
\uExt^n_{\cO_X}(\cM, \cN)_x \cong  \rExt^n_{\cO_{X,x}}(\cM_x,
\cN_x).
$$ 
Combining these, there is an integer $m \geq 0$
and isomorphisms
\begin{equation}
\begin{aligned}\label{global_sects_isom}
\sExt_R^{m+2k}(M,N) &\cong \Gamma( Y, \uExt^{m}_{\cO_Y}(\cM,
\cN)(k) )\quad \text{for all } k \geq
0 \\
\sExt_R^{m+2k + 1}(M,N) &\cong \Gamma( Y, \uExt^{m+1}_{\cO_Y}(\cM,
\cN)(k) )\quad \text{for all } k \geq
0.
\end{aligned}
\end{equation}
In other words,  the even (respectively, odd) $\rExt$-modules of $M$ and $N$ are given (in
high enough
degrees) by the graded components of the graded $Q[T_1, \dots, T_c]$-module associated to the
coherent $\cO_Y$-sheaf
$\uExt^m_{\cO_Y}(\cM, \cN)$ (respectively, $\uExt^{m+1}_{\cO_Y}(\cM, \cN)$).
If $c = 1$, then $Y = \Spec R$ and $\cL$ is the trivial bundle, recovering the
two-periodicity for hypersurfaces.

The isomorphisms \eqref{global_sects_isom} show that in high degrees there are natural  maps of
degree two on the stable $\rExt$-modules, given by 
multiplication by the variables $T_1, \dots, T_c$.
On the other hand, it is well known that 
\[
\Ext *  R M N =
\bigoplus_{n \geq 0} \Ext n R M N
\] 
is a
graded module over the polynomial ring $R[\chi_1, \ldots, \chi_c]$
(which is graded by setting $|\chi_i| = 2$), where the action of the $\chi_i$'s is induced by the {\em Eisenbud
  operators} \cite{Ei80}.

Theorem \ref{thm:Eisenbud} shows that these two families of operators coincide under
the isomorphisms $\sExt^n_R(M,N) \cong \Ext n R M N$, for $n \gg 0$:
\begin{introthm}
\label{introthm3}
Let $M$ and $N$ be finitely generated
$R$-modules. For $n \gg 0$ and $i = 1, \ldots, c$ there is an equality:
\[ \chi_i = T_i : \sExt_R^n(M,N) \to \sExt_R^{n+2}(M,N).\]
\end{introthm}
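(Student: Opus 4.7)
The plan is to identify both operators at the chain level. Both $\chi_i$ and multiplication by $T_i$ define natural transformations of bifunctors $\sExt_R^n(-,-) \Rightarrow \sExt_R^{n+2}(-,-)$ for $n \gg 0$, so by a standard Yoneda argument it should suffice to check that they agree on $\id_M \in \sExt_R^0(M,M)$ for each finitely generated $R$-module $M$ (equivalently, on a single generating family of elements).

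First I would produce an explicit $R$-projective resolution of $M$ built from the matrix factorization $(e_1,e_0)$ of $W$ corresponding to $M$ under $\Psi$. The generalization of Eisenbud's hypersurface resolution --- which is taken up later in the paper --- assembles such a resolution from $e_0$, $e_1$, and the Koszul-type data coming from the embedding $\beta : \ps R \hookrightarrow Y$. The key structural feature for the present purpose is that the construction is already set up so that each term lifts canonically to a free $Q$-module: one simply retains the matrix factorization data on $\ps Q$ before imposing the relation $W = \sum_i T_i f_i = 0$.

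Next I would read off the Eisenbud operators from that lift. By definition, $\chi_i$ is, up to homotopy, the $f_i$-component of $\tilde d^{\,2}$ for any lift $\tilde d$ of the $R$-differential to $Q$. Because the matrix factorization equation $e_0 e_1 = W \cdot \id = \sum_i T_i f_i \cdot \id$ records the failure of $d^2 = 0$ as a $T_i$-weighted sum of the $f_i$ term by term, a direct computation on this resolution shows that the $f_i$-component is precisely multiplication by $T_i$ followed by a shift of the differential. Passing through the isomorphism \eqref{global_sects_isom} then identifies this shifted operator with the action of $T_i \in \Gamma(X, \cO(1))$ on the graded module $\bigoplus_k \Gamma(Y, \uExt^m_{\cO_Y}(\cM,\cN)(k))$, which is the desired equality.

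The principal obstacle is carrying out the translation between the affine Eisenbud formalism --- stated for $R$-free resolutions together with an auxiliary lift to $Q$ --- and the projective $\cO_Y$-formalism that appears on the matrix factorization side. In particular, one has to verify that the conventions for signs, shifts, and the choice of $Q$-lift line up so that each $T_i$ matches $\chi_i$ on the nose rather than some scalar multiple or sign variant. Because both $\chi_i$ and $T_i$ are compatible with maps of modules, checking this compatibility on the universal class $\id_M$ for a single $M$ and then invoking naturality should be enough to conclude.
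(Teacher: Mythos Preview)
Your strategy matches the paper's: build an $R$-projective complex representing $\Psi(\bE)$ from sheaf cohomology of the matrix factorization on $\ps R$, lift it to $Q$ by working on $\ps Q$ instead, and read off the Eisenbud operators directly from the relation $W=\sum_i T_i f_i$ (this is exactly Theorem~\ref{thm:Eisenbud}, proved in Section~\ref{operator_proof_section}). The Yoneda reduction is superfluous --- both $\chi_i$ and $T_i$ are already natural transformations $\id_{\Dsg(R)}\to[2]$, so the content is precisely the objectwise comparison you go on to sketch --- and, as you correctly anticipate, the real work lies in making the complex $F\bigl((\RQ^\sharp\bE)^{\leq m}\bigr)$ and its $Q$-lift $\tF(\bE)^{\leq m}$ precise and checking the base-change identification $\tF(\bE)^{\leq m}\otimes_Q R\cong F\bigl((\RQ^\sharp\bE)^{\leq m}\bigr)$.
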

This statement of this Theorem and its consequences occupy Section
\ref{sec_eis_ops}. The (surprisingly delicate) proof makes up Section
\ref{operator_proof_section}. 

As outlined above, the equivalence \eqref{L826} was first proved using the fact that
the minimal free resolution of
a module over a local hypersurface is eventually given by a matrix
factorization. In Section \ref{sec_proj_res} we go in the opposite
direction and use the equivalence $\Psi$ to show: 

\begin{introthm}
\label{introthm2}
Let $M$ be a finitely
generated $R$-module and $\bE = (\cE_1 \to \cE_0 \to \cE_1(1))$ a matrix factorization of $(\ps
Q, \cO(1), W)$ such that \[\Psi(\bE) \cong M \in \Dsing(R).\] There is
an integer $n$, that depends on $\bE$, such that the $n$-th
syzygy of $M$ has a free resolution constructed from the sheaf
cohomology of twists of $\cE_1$ and $\cE_0$.
\end{introthm}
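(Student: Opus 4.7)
The plan is to extract a free resolution of a syzygy of $M$ by applying sheaf cohomology to an iterated matrix factorization sequence. Starting from $\bE = (\cE_1 \xra{e_1} \cE_0 \xra{e_0} \cE_1(1))$, iteration produces the biinfinite sequence of coherent sheaves on $X$,
\[
\cdots \to \cE_1(k) \xra{e_1(k)} \cE_0(k) \xra{e_0(k)} \cE_1(k+1) \xra{e_1(k+1)} \cE_0(k+1) \to \cdots,
\]
whose consecutive compositions equal multiplication by $W = \sum_i f_i T_i \in \Gamma(X, \cO(1))$. Over $X$ this is not a complex, but its restriction to $Y$ is a two-periodic (up to the twist by $\cO_Y(1)$) acyclic complex that represents the class of $\bE$ under the cokernel equivalence $[MF(X,\cO(1),W)] \cong \Dsg(Y)$ of Theorem \ref{introthm1}.

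The next step is to pass to free $R$-modules. Choose $k_0 \gg 0$ so that $H^i(X, \cE_j(k)) = 0$ for all $i > 0$, $j \in \{0,1\}$, and $k \geq k_0$ (Serre vanishing). For such $k$, cohomology and base change applied to the flat projective map $X \to \Spec Q$ shows that $\Gamma(X, \cE_j(k))$ is a finitely generated locally free $Q$-module, so $\Gamma(X, \cE_j(k)) \otimes_Q R$ is free over $R$. Crucially, applying $\Gamma(X, -) \otimes_Q R$ to the sheaf-level maps yields a sequence whose consecutive compositions equal multiplication by $W \otimes_Q 1$; since each $f_i \mapsto 0$ in $R$, this vanishes, and we obtain an honest complex $G_\bullet$ of finitely generated free $R$-modules, built entirely from the sheaf cohomology of twists of $\cE_0$ and $\cE_1$.

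The remaining step, which is the heart of the argument, is to show that (a suitable reindexing of) $G_\bullet$ is a free resolution of some $R$-module $N$ and to identify $N$ with the $n$-th syzygy of $M$. For exactness I would use a hypercohomology spectral sequence: the sheaf-level sequence restricted to $Y$ is acyclic, and for twists $k \geq k_0$ Serre vanishing causes the spectral sequence to degenerate, so acyclicity descends to $G_\bullet$. For the identification of $N$, I would use the explicit form of the equivalence $\Phi$ from the Appendix: since $\Psi(\bE) \cong M$ in $\Dsg(R)$ and $G_\bullet$ provides an explicit $R$-module realization of $\Psi(\bE)$, the relevant cohomology module of $G_\bullet$ must coincide with a syzygy of $M$ up to free summands. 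The integer $n$ is controlled by the threshold $k_0$, which in turn is governed by the Castelnuovo--Mumford regularity of $\cE_0 \oplus \cE_1$. The main obstacle will be pinning down $N$ precisely as a syzygy of $M$ --- that is, upgrading the isomorphism $\Psi(\bE) \cong M$ in $\Dsg(R)$ to an on-the-nose identification (modulo free summands) of an actual $R$-module --- which will require a careful comparison of the sheaf complex on $X$, its restriction to $Y$, and the action of $\Phi$.
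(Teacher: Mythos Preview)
Your approach has a direction error that prevents it from producing a projective resolution. In the biinfinite sequence the differentials point toward increasing twist, so a projective resolution $\cdots \to P_1 \to P_0 \to N \to 0$ must have its infinite tail in the direction $k \to -\infty$. But $\Gamma(X,\cE_j(k))$ vanishes for $k \ll 0$, so after applying $\Gamma(X,-)\otimes_Q R$ you obtain a complex that is bounded \emph{below} and extends to $k \to +\infty$. At best your spectral-sequence argument shows this is a free \emph{co}-resolution $0 \to N \to P^0 \to P^1 \to \cdots$ of some $N$; no reindexing turns that into a projective resolution of a syzygy of $M$.

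The paper resolves this by using the \emph{top} cohomology $\rH^{c-1}(\ps R, \RQ^*\cE_j(m))$ for $m \ll 0$ rather than $\rH^0$ for $m \gg 0$. The point (Lemma~\ref{lem:good} and Lemma~\ref{suff_small_terms_of_regularity}) is that for sufficiently negative twists one has $\rH^i = 0$ for $i \ne c-1$ and $\rH^{c-1}$ projective; this is established via Serre--Grothendieck duality, so the relevant bound is the regularity of the \emph{dual} sheaves $(\RQ^*\cE_j)^\vee$, not of $\cE_j$ itself. The resulting right-bounded complex $F(\RQ^\sharp\bE)^{\le m}$ is then shown to represent $\Psi(\bE)$ in $\Dsg(R)$ (Proposition~\ref{prop:F}), and the final identification of the resolved module with a genuine syzygy of $M$ uses that $R$ is Gorenstein together with Buchweitz's equivalence of $\Dsg(R)$ with the stable category of MCM modules. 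Your construction and the paper's are in fact Serre-dual to one another: dualizing your co-resolution over $R$ would produce a resolution built from $\rH^{c-1}$ of the dual sheaves, but this dualization---and the bookkeeping needed to land on a syzygy of $M$ rather than of its $R$-dual---is absent from your sketch.
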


See Theorem
\ref{proj_res_thm} for a precise description of this
resolution. In Section \ref{sec:inverse} we show how to construct a
matrix factorization $\bE_M$ such that $\Psi( \bE_M ) \cong M$, using
a finite resolution of $M$ over $Q$ and a system of higher homotopies,
as in \cite[\S 7]{Ei80}. Using this explicit construction we also show
that one may describe $\Dsg(R)$ using graded matrix factorizations of
$W$ over the ring $Q[T_1, \ldots, T_c]$.

When $R$ is local, Avramov and Buchweitz in \cite{AvBu00} use the action of $R[\chi_1, \ldots, \chi_c]$
on $\Ext * R M N$ to define the notion of a \emph{support
  variety}. Given finitely generated $R$-modules $M$ and $N$,
their support variety is a closed subset of $\A^c_k$, which we write
as 
\[V_Q^\vf(M, N)^{AB} \subseteq \A^c_k = \Spec
k[\chi_1, \ldots, \chi_c],
\] 
where $k$ is
the residue field of $R$.

For $R = Q/(\vf)$ a (not necessarily local) complete intersection, we
define the \emph{stable support set} of a pair of $R$-modules $M$ and $N$ to be
\[ V_Q^\vf(M,N) := \supp \widetilde{ \Ext * R M N} \subseteq \ps R.\]
After establishing
various formal properties of matrix
factorizations in Section \ref{props_mf} which we will use, in Section
\ref{suppsec} we show the properties of
stable support sets listed in the theorem below. These generalize the properties of support varieties proven
in \cite{AvBu00}; see also \cite[5.1]{BIK2}.

\begin{introthm}
\label{introthm4}
For finitely generated  $R$-modules
$M$, $N$, $M'$, and $N'$, we have
\begin{enumerate}

\item $V_Q^\vf(M,N)^{AB}$ is the cone
of the closed fiber of $V_Q^\vf(M,N)$ when $R$ is local;
\item $V_Q^\vf(M,N) = \emptyset$ if
and only if $\Ext n R M N = 0$ for all $n \gg 0$;
\item $V_Q^\vf(M,N) \cap V_Q^\vf(M',N') = V_Q^\vf(M,N') \cap V_Q^\vf(M',N)$; and 
\item $V_Q^\vf(M,N) =V_Q^\vf(M,M) \cap V_Q^\vf(N,N) = V_Q^\vf(N,M)$.
\end{enumerate}
\end{introthm}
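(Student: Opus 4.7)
My plan is to translate the stable support set, which is a priori defined via the graded $R[\chi_1,\ldots,\chi_c]$-module structure on $\Ext * R M N$ sheafified on $\ps R$, into a sheaf-theoretic support on $Y$ built from the coherent sheaves $\cM$ and $\cN$. Once this reinterpretation is in place, properties (2)--(4) reduce to elementary identities for supports of coherent sheaves on $Y$ (more precisely, in $\Dsg(Y)$), and property (1) follows from the Avramov--Buchweitz definition via reduction modulo the maximal ideal of $R$.

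The technical heart of the argument is to establish, for $m \gg 0$, the chain of equalities
\[
\begin{aligned}
V_Q^\vf(M,N) &= \bigl( \supp \uExt^m_{\cO_Y}(\cM,\cN) \,\cup\, \supp \uExt^{m+1}_{\cO_Y}(\cM,\cN) \bigr) \cap \ps R \\
&= \supp_{\Dsg(Y)}(\cM) \cap \supp_{\Dsg(Y)}(\cN) \cap \ps R,
\end{aligned}
\]
where $\supp_{\Dsg(Y)}(\cF)$ denotes the locus of points $y \in Y$ at which $\cF_y$ has infinite projective dimension over $\cO_{Y,y}$. The first equality combines Theorem \ref{introthm3} with \eqref{global_sects_isom} to identify $\Ext {\geq m} R M N$, as a graded $R[T_1,\ldots,T_c]$-module, with the direct sum $\bigoplus_{k \geq 0}\Gamma(Y, \uExt^{m}_{\cO_Y}(\cM,\cN)(k)) \oplus \bigoplus_{k \geq 0}\Gamma(Y, \uExt^{m+1}_{\cO_Y}(\cM,\cN)(k))$; sheafifying on $\ps R$ via the closed immersion $\ps R \into Y$ and invoking Serre's comparison theorem then yields the claim. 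The second equality uses the obvious inclusion of the left-hand side into the right, together with a converse: if $\cM_y$ and $\cN_y$ are both nonzero in $\Dsg(\cO_{Y,y})$, then under the local version of Theorem \ref{introthm1} they correspond to matrix factorizations having no trivial rank-one summand, and the twisted two-periodicity \eqref{intro_eqation} then forces $\uExt^m_{\cO_{Y,y}}(\cM_y,\cN_y)$ to be nonzero for infinitely many $m$.

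With this reformulation in hand, properties (2)--(4) are formal. Part (2) holds because the support vanishes iff $\uExt^m_{\cO_Y}(\cM,\cN)|_{\ps R} = 0$ for all $m \gg 0$, which by \eqref{global_sects_isom} translates to $\Ext n R M N = 0$ for $n \gg 0$. Parts (3) and (4) reduce to the set-theoretic identities $(A \cap B) \cap (A' \cap B') = (A \cap B') \cap (A' \cap B)$ and $A \cap B = (A \cap A) \cap (B \cap B)$, applied to the relevant $\Dsg(Y)$-supports intersected with $\ps R$. Part (1) follows from the Avramov--Buchweitz definition of $V_Q^\vf(M,N)^{AB}$ as the support of $\Ext * R M N \otimes_R k$ in $\A^c_k$: tensoring with $k$ corresponds to restricting the sheafification from $\ps R$ to $\ps k$, and since $V_Q^\vf(M,N)^{AB}$ is by construction an affine cone, it must equal the affine cone on $V_Q^\vf(M,N) \cap \ps k$. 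The main obstacle I anticipate is the reverse inclusion in the second equality above, namely converting stable non-vanishing in $\Dsg(\cO_{Y,y})$ into infinitely many nonzero $\uExt^m$ groups; this is where the matrix factorization description of $\Dsg(\cO_{Y,y})$ plays its decisive role.
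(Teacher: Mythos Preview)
Your approach differs structurally from the paper's. You aim to establish the intersection formula $V_Q^\vf(M,N) = \supp_{\Dsg(Y)}(\cM) \cap \supp_{\Dsg(Y)}(\cN) \cap \ps R$ directly and then read off (3) and (4) as set-theoretic trivialities. The paper instead works on $\ps Q$ with the twisted-periodic complex $\HomMF(\bE_M,\bE_N)$: it proves (3) first, via the isomorphism $\HomMF(\bE_1,\bE_2)\otimes_{MF}\HomMF(\bE_3,\bE_4) \cong \HomMF(\bE_1,\bE_4)\otimes_{MF}\HomMF(\bE_3,\bE_2)$ combined with the identity $\supp(\cP\otimes_{MF}\cQ)=\supp\cP\cap\supp\cQ$ for twisted-periodic complexes over a \emph{regular} scheme (established by pulling back to residue fields), and then deduces (4) from (3) together with $\supp\cP=\supp\cP^\vee$. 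Parts (1) and (2) are handled the same way in both.

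Your proposal has a genuine gap at exactly the spot you flagged. The reverse inclusion requires: if $\cM_y$ and $\cN_y$ are both nonzero in $\Dsg(\cO_{Y,y})$, then $\sExt^m_{\cO_{Y,y}}(\cM_y,\cN_y)\neq 0$ for some (hence, by periodicity, infinitely many) $m$. Two-periodicity alone cannot give this---it propagates nonvanishing but does not create it. Saying the corresponding matrix factorizations ``have no trivial rank-one summand'' is a restatement of the hypothesis, not an argument for why their Hom-complex has nonzero cohomology. What is actually needed is that for two nontrivial matrix factorizations over a regular local ring, their Hom-complex (a $\Z/2$-graded complex of free modules) is not exact. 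The paper proves exactly this via the tensor decomposition $\HomMF(\bE,\bF)\cong\bE^\vee\otimes_{MF}\bF$ and a residue-field computation: over a field any $\Z/2$-complex splits as its cohomology, so the tensor of two non-exact such complexes is non-exact. You could alternatively import the local hypersurface case from Avramov--Buchweitz, but then your argument for (4) would be citing rather than reproving its hardest ingredient. Either way, the step you glossed over is where the real content lies, and it is essentially the same content the paper isolates in its support-of-tensor proposition.
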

Moreover, it follows almost immediately from the definition that $$V_Q^\vf(M, N)
= V_Q^\vf(\Omega^n_R(M), N) = V_Q^\vf(M, \Omega^n_R(N))$$ for any $n
\in Z$, where $\Omega^n_R(M)$ is an $n$th syzygy of $M$. (If $n < 0$
then $\Omega^n_R(M)$ is defined if and only if there is a long exact
sequence $0 \to M \to P_0 \xra{\partial} \cdots \xra{\partial} P_{-n}$ with $P_i$
projective. In this case $\Omega^n_R(M) = P_{-n} / \partial(P_{-n+1})$.)

One can check that stable support sets are contained in the singular
locus of $Y$, which is a subset of $\ps R$; see \ref{supp_is_in_sing}.
Thus the following result shows
that every ``conceivable'' subset
of $\ps R$ is the stable support set of an $R$-module. This answers
in the affirmative a question suggested to us by Avramov, and it
generalizes \cite{Be07} and \cite{AvIy07}; see also \cite[7.11]{1105.4698}.

\begin{introthm} \label{introthm5}
For every
closed subset $C$ of the singular locus of  \[Y = \Proj
\left (Q[T_1, \dots, T_c]/(W)\right ),\] there exists an
$R$-module $M$ with $C = V_Q^\vf(M,M)$. 
\end{introthm}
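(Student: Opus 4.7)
The plan is to transport the realization question into $[MF(\ps Q, \cO(1), W)]$ via the equivalence $\Psi$ and then imitate the Carlson-type construction of \cite{Be07, AvIy07}. Combining the isomorphisms \eqref{intro_eqation}--\eqref{global_sects_isom} with Theorem \ref{introthm3}, the stable support set $V_Q^\vf(M,M)$ coincides with the support of $\cM = \beta_*\pi^*M$ viewed as an object of $\Dsg(Y)$---that is, the set of points $y$ at which $\cM_y$ has infinite projective dimension over $\cO_{Y,y}$, a set automatically contained in $\Sing(Y)$. Since $\Phi\colon \Dsg(Y) \xrightarrow{\cong} \Dsg(R)$ is an equivalence, it suffices to exhibit, for every closed $C \subseteq \Sing(Y)$, an object $\cF \in \Dsg(Y)$ (equivalently, a matrix factorization $\bE$) whose $\Dsg(Y)$-support equals $C$; then $M := \Phi(\cF)$ will satisfy $V_Q^\vf(M,M) = C$.

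Write $C = V(I)$ for a homogeneous ideal $I \subseteq R[T_1, \dots, T_c]$ with generators $g_1, \dots, g_s$ of respective degrees $d_j$. By Theorem \ref{introthm3}, each $g_j$ acts as a natural morphism $T^{g_j}\colon \bE \to \bE(d_j)$ in $[MF]$, inducing multiplication by $g_j$ on $\uExt^*_{\cO_Y}(\cM,\cM)$. Starting from a matrix factorization $\bE_0$ whose $\Dsg(Y)$-support is all of $\Sing(Y)$, I set $\bE_j := \cone\bigl(T^{g_j}\colon \bE_{j-1} \to \bE_{j-1}(d_j)\bigr)$. The distinguished triangle $\bE_{j-1} \to \bE_j \to \bE_{j-1}(d_j)[1]$, together with the twisted periodicity $\uExt^{n+2}_{\cO_Y}(\cM,\cN) \cong \uExt^n_{\cO_Y}(\cM,\cN)(1)$ from \eqref{intro_eqation}, yields at each stalk $y \in \ps R$ the identification $\supp \bE_j = \supp \bE_{j-1} \cap V(g_j)$. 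After $s$ iterations, $\supp \bE_s = \Sing(Y) \cap V(I) = C$, and $M := \Psi(\bE_s)$ is the desired module.

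For the starting MF, observe that $\Sing(Y)$ has finitely many irreducible components with generic points $y_1, \dots, y_r$, projecting under $\pi$ to primes $\fp_1, \dots, \fp_r$ of $R$, and $\pi(\Sing(Y)) = \bigcup V(\fp_i)$ is closed since $\pi$ is proper. I take $M_0 := \bigoplus_i R/\fp_i$ and $\bE_0 := \Psi^{-1}(M_0)$. Since $\beta_*\pi^*(R/\fp)$ is the structure sheaf of $\ps{R/\fp} \subseteq Y$ and its stalks at points $y \in \Sing(Y)$ above $V(\fp)$ fail to be perfect, a direct check yields $V_Q^\vf(R/\fp_i, R/\fp_i) = \pi^{-1}(V(\fp_i)) \cap \Sing(Y)$. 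Therefore $V_Q^\vf(M_0, M_0) = \bigcup_i \pi^{-1}(V(\fp_i)) \cap \Sing(Y) = \pi^{-1}(\pi(\Sing(Y))) \cap \Sing(Y) = \Sing(Y)$, as needed.

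The principal obstacle will be verifying the Carlson-cone identity $\supp \cone(T^{g}) = \supp \bE \cap V(g)$ in this sheafy setting. This rests on two ingredients: a sheafified form of Theorem \ref{introthm3} identifying $T^g$ on $[MF]$ with multiplication by $g$ on $\uExt^*_{\cO_Y}$, and the formal properties of $[MF]$ developed in Section \ref{props_mf} ensuring exactness of the long $\uExt$ sequence compatible with the twisted periodicity. Once these are in place, the conclusion follows by a stalk-wise five-lemma argument and sheafification on $\ps R$.
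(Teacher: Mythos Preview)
Your approach is genuinely different from the paper's, and considerably more laborious. The paper's proof (Theorem~\ref{thm_realization}) is essentially a one-line construction: since $Q$ is regular, $\gamma_*\cO_C$ is automatically perfect on $\ps Q$, so $\cO_C \in \rDsg(Y \into \ps Q) = \Dsg(Y)$; take $M := \Phi(\cO_C)$, replaced by an isomorphic module via Corollary~\ref{complexes_isom_to_mod_rel_sing}. Then $V_Q^\vf(M,M) = \supp \suExt^0_{\cO_Y}(\cO_C, \cO_C)$, and one checks stalkwise that this equals $C$: at a generic point $y$ of a component of $C$ (with reduced structure), $(\cO_C)_y$ is the residue field of the singular local ring $\cO_{Y,y}$, hence has infinite projective dimension. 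No cones, no iteration, no starting module.

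Your Carlson-cone strategy is the classical route of \cite{Be07, AvIy07} and can be made to work here, but several steps are not established. First, the assertion $V_Q^\vf(R/\fp_i, R/\fp_i) = \pi^{-1}(V(\fp_i)) \cap \Sing(Y)$ is not a ``direct check'', and the full equality need not hold: for $y$ lying over some $\fq \supsetneq \fp_i$ at which $R/\fp_i$ is not regular, the stalk $(\beta_*\pi^*(R/\fp_i))_y$ is not a regular local ring and the obvious transitivity argument fails. What you actually need, and what \emph{is} true, is only that each generic point $y_i$ lies in $V_Q^\vf(R/\fp_i)$: indeed $(\beta_*\pi^*(R/\fp_i))_{y_i} = \cO_{\bP^{c-1}_{\kappa(\fp_i)}, \bar y_i}$ is a regular local ring with residue field $k(y_i)$, so if it had finite projective dimension over $\cO_{Y,y_i}$ then so would $k(y_i)$, contradicting $y_i \in \Sing(Y)$. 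Since support is closed, this already gives $\bigcup_i \overline{\{y_i\}} = \Sing(Y) \subseteq V_Q^\vf(M_0)$. Second, you defer the cone identity $\supp\cone(T^g) = \supp\bE \cap V(g)$ to ``two ingredients'' without actually verifying it; at each stalk this reduces to showing that multiplication by an element of the maximal ideal is never a stable isomorphism on a nonzero MCM module, which is true but requires an argument (any stable automorphism that factors through a free module forces the module to be free). Finally, $\Psi(\bE_s)$ is only an object of $\Dsg(R)$; you must still invoke Corollary~\ref{complexes_isom_to_mod_rel_sing} to replace it by an honest $R$-module. The paper's direct use of $\cO_C$ sidesteps all of this.
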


This Theorem is proved in Section \ref{suppsec}. Also in that section
we show how notions of support for modules and
complexes over complete intersections defined  in
\cite{BIK08, 1105.4698} relate to stable support. 

The paper also contains two appendices: Appendix
\ref{appendix} gives the generalization of Orlov's theorem described
above and Appendix \ref{compl_resl_sec} shows that if a module $M$ has a
complete resolution in the sense of \cite{Ve06}, then this resolution
may be used to compute the stable $\operatorname{Ext}$-modules as
defined above. This is well known in the Gorenstein case, see
\cite{Bu87}, but we could not find the result in the literature in the
generality that we need it here.

As alluded to above, we have endeavored to state and prove the
results in this paper in a more general setting than discussed in this
introduction. In particular, we do not typically assume $Q$ is a
regular ring, but instead assume that $M$ (and occasionally $N$) has
finite projective dimension over $Q$. This leads to somewhat more
complicated statements than those listed in this introduction. 
To counteract this, we have also sought to include explicitly the ``simple case'' in which
$Q$ is regular, so that the reader who is only interested in this case
can find the results in a more pleasing and easy-to-understand format.

\begin{ack}
 It is a pleasure to thank Lucho Avramov, Ragnar-Olaf Buchweitz, David
  Eisenbud, Srikanth Iyengar, Daniel Murfet, and Greg Stevenson for helpful
  conversations about the topics in this paper.
\end{ack}

\section{Matrix factorizations of locally free sheaves and Orlov's
  Theorem}
\label{setup}
In this section, we summarize relevant results from our
previous paper \cite{BW11a} on matrix factorizations of locally free
sheaves and then show how a theorem of
Orlov \cite{MR2437083} relates these to affine complete
intersections.

\subsection{Matrix factorizations of locally free sheaves}
\label{mf_defn}
Throughout $X$ will denote a Noetherian separated scheme and $\cL$ a line
bundle on $X$. To simplify notation, even if $\cL$ is not very ample, for a quasi-coherent sheaf $\cG$ (or a
complex of such) on $X$ and integer $n$, we will write
$\cG(n)$ for $\cG \otimes_{\cO_X} \cL^{\otimes n}$, where $\cL^{\otimes -n} := \uHom_{\cO_X}(\cL^{\otimes n},
\cO_X)$ for $n \geq 1$. (Here, and elsewhere, $\uHom$ denotes the
sheaf-$\rHom$.)
In particular, $\cO(1) = \cL$.
Similarly, if $f$ is morphism of (complexes
of) quasi-coherent sheaves, then $f(1) = f \otimes
id_{\cL}$.  

\begin{defn}
 Let $W$ be a global section of $\cL$. A {\em matrix factorization} $\E =  (\cE_1 \xra{e_1} \cE_0 \xra{e_0}
  \cE_1(1))$ of the triple $(X, \cL, W)$ consists of a pair of
  locally free coherent sheaves $\cE_1, \cE_0$ on $X$ and morphisms $e_1: \cE_1 \to \cE_0$ and $e_0: \cE_0 \to \cE_1(1)$
  such that $e_0 \circ e_1$ and $e_1(1) \circ e_0$ are multiplication
  by $W$. A {\em strict morphism} of matrix factorizations from
  $(\cE_1 \to \cE_0  \to \cE_1(1))$ to 
  $(\cF_1 \to \cF_0  \to \cF_1(1))$ is a pair of maps $\cE_0 \to
  \cF_0, \cE_1 \to \cF_1$ causing the evident squares to 
  commute. Matrix factorizations and strict morphisms of such form a category which we write
$MF(X, \cL, W)_\exact$ or just $MF_\exact$ for short.
\end{defn}
The previous definition first appeared in
\cite{Polishchuk:2010ys}. The category $MF(X, \cL, W)_\exact$ is made
into an exact category
in the sense of Quillen \cite{MR0338129} by declaring a sequence of
strict maps to be
exact if it is so in both degrees.

\begin{defn}
\label{twisted_periodic_def} 
A \emph{twisted periodic complex of locally free coherent sheaves}
for $(X, \cL)$ 
is a chain complex $\cC$ of locally free coherent sheaves on $X$ together
with a specified isomorphism $\alpha: \cC[2] \xra{\cong} \cC(1)$, 
where we use the convention that $\cC[2]^i =
\cC^{i+2}$. The category $\TP(X,\cL)$ has twisted periodic
complexes as objects and a morphism is a chain map that commutes
with the isomorphisms in the evident sense. There is an equivalence
\[ \TP(X,\cL) \cong MF(X, \cL, 0)_\exact\]
given by sending $(\cC, \alpha)$ to $\cC^{-1} \xra{d}  \cC^0
\xra{\alpha^{-1} \circ \, d} \cC^{-1}(1)$.
\end{defn}

The motivating example of a twisted periodic complex is
the following:
\begin{defn}
\label{def_hom_complex}
 Let $\E =  (\cE_1 \xra{e_1} \cE_0 \xra{e_0} \cE_1(1))$ and $\F =
 (\cF_1 \xra{f_1} \cF_0 \xra{f_0} \cF_1(1))$ be matrix
 factorizations of $(X, \cL, W)$. We define the {\em mapping complex}
 of $\bE$ and $\bF$,
 written $\HomMF(\E, \F)$, to be the following twisted periodic complex of locally free sheaves:
\begin{equation*}
{\small
\ldots \xra{\partial^0(-1)}
\begin{matrix}
  \uHom(\cE_0, \cF_1) \\
  \oplus \\
  \uHom(\cE_1, \cF_0(-1))
\end{matrix}\xra{\partial^{-1}}
\begin{matrix}
  \uHom(\cE_0, \cF_0) \\
  \oplus \\
  \uHom(\cE_1, \cF_1)
\end{matrix}
\xra{\partial^{0}}
\left(\begin{matrix}
  \uHom(\cE_0, \cF_1) \\
  \oplus \\
  \uHom(\cE_1, \cF_0(-1))
\end{matrix}
\right)(1)
\xra{\partial^{-1}(1)} \ldots
}
\end{equation*}

Here $\uHom(\cE_0, \cF_0)
  \oplus
  \uHom(\cE_1, \cF_1)$ lies in degree $0$, and 
the differentials are given by
$$
\partial^{-1} = 
\begin{bmatrix}
(f_1)_* & -e_0^* \\
-e_1^* & (f_0)_* \\
\end{bmatrix}
\quad \text{ and } \quad
\partial^{0} = 
\begin{bmatrix}
(f_0)_* & e_0^* \\
e_1^* & (f_1)_* \\
\end{bmatrix},
$$
using the canonical isomorphisms 
$$
\uHom(\cE_i, \cF_j(1))  \cong
\uHom(\cE_i, \cF_j)(1)  \cong
\uHom(\cE_i(-1), \cF_j)
$$ 
and
$$
\left(\begin{matrix}
  \uHom(\cE_0, \cF_1) \\
  \oplus \\
  \uHom(\cE_1, \cF_0(-1))
\end{matrix}
\right)(1)
\cong
\begin{matrix}
  \uHom(\cE_0, \cF_1(1)) \\
  \oplus \\
  \uHom(\cE_1, \cF_0).
\end{matrix}
$$
One checks that $\partial^0 \circ \partial^{-1}$ and $\partial^{-1}(1)
\circ \partial^0$ are both zero, and hence 
$\HomMF(\bE, \bF)$ is in fact a twisted periodic complex.
\end{defn}

Note that there is an isomorphism
\[ \Hom {MF_\exact} \E  \F \cong Z^0( \Gamma(X,\HomMF(\E, \F)) ),\]
where $\Gamma(X,\HomMF(\E, \F))$ is the complex of abelian groups
obtained by applying the global sections functor 
degree-wise to $\HomMF(\E, \F)$, and $Z^0$ denotes the cycles in degree zero.

Following \cite{1101.4051,Polishchuk:2010ys} we define the {\em homotopy
category} associated to $MF(X, \cL, W)$, written $[MF(X, \cL, W)]$ or just $[MF]$, as follows. First, one defines the ``naive homotopy category'',
written $[MF]_\naive$, to have Hom-sets
$$
 \rHom_{[MF]_\naive}(\bE, \bF) = \rH^0\left ( \Gamma \left( X,
     \HomMF(\bE, \bF) \right) \right ).
$$
Thus ``homotopic''
morphisms in $MF$ are identified.
The category $[MF]_\naive$ is triangulated with shift functor and
triangles as given in e.g.\ \cite[2.5]{BW11a}. An object $\bE$ of $[MF]_\naive$ is {\em locally contractible} if for each $x
\in X$ the evident localization at $x$, written $\bE_x$, is isomorphic to zero in 
$$
[MF(\Spec \cO_{X,x}, \cL_x, W_x)]_\naive.
$$
The set of such objects forms a thick subcategory.
The homotopy category $[MF]$ is the Verdier quotient 
$$
[MF(X, \cL, W)] = \frac{[MF(X, \cL, W)]_\naive}{\text{locally
    contractible objects}}.
$$

We developed in \cite{BW11a} a more
explicit description of the Hom-sets of $[MF]$ when $X$ is projective
over a Noetherian ring. Recall that a scheme $X$ is projective over a ring $Q$
if there is a closed embedding $j: X \into \bP^m_Q$ for some $m \geq
0$. In this case, we say that $\cO_X(1) := j^* \cO_{\bP^m_Q}(1)$ is the
\emph{corresponding very ample line bundle} on $X$. 

To describe the construction, we first make a fixed choice of a
  finite affine open cover $\cU = \{U_1, \dots, U_m\}$ of $X$, and for
  any quasi-coherent sheaf $\cF$ on $X$, let $\Gamma(\cU, \cF)$ denote
  the usual Cech construction. Since $X$ is separated, the cohomology
  of the complex
  $\Gamma(\cU, \cF)$ gives the sheaf cohomology of $\cF$.
We  define
 $\Gamma(\cU, \HomMF(\E, \F))$ to be
 the total complex associated to the bicomplex
$$
0 \to \bigoplus_i \Gamma(U_i,\HomMF(\E,
  \F)) \to \bigoplus_{i<j} \Gamma(U_i \cap U_i,\HomMF(\E, \F)) \to \cdots
$$
given by applying the Cech construction degree-wise.
If $\G$ is another matrix factorization, there is an evident morphism
of chain complexes
\begin{equation}
\Gamma(\cU, \HomMF(\E, \F)) \otimes \Gamma(\cU, \HomMF(\F,
\G)) \to \Gamma(\cU, \HomMF(\E, \G))\label{eq:gammacu-hommfe-f}
\end{equation}
which is associative and unital.

We set \[ \bH^q(X, \HomMF(\E, \F)) = \rH^q( \Gamma(\cU, \HomMF(\E, \F))
)\] and define the category $[MF(X, \cL, W)]_\bH$ to have the same
objects as $MF(X,
\cL, W)_\exact$ with
morphisms
$$
\Hom {[MF]_{\bH}} \E \F := \bH^0 (X, \HomMF(\E, \F)).
$$
The composition maps in this category are the maps in homology induced
by \eqref{eq:gammacu-hommfe-f}. 

There is a functor $[MF]_\naive \to [MF]_\bH$ that is the identity on
objects. The maps on morphisms are given by the canonical
maps
\[ \rH^0\left ( \Gamma \left( X,
     \HomMF(\bE, \bF) \right) \right ) \to \bH^0 (X, \HomMF(\E, \F)). \]
This functor induces a functor $[MF] \to [MF]_\bH$ by \cite[3.6]{BW11a}.

\begin{thm} \cite[Theorem 4.2]{BW11a}
Let $X$ be a scheme that is projective over a Noetherian ring $Q$ and
$\cL = \cO_X(1)$ be  the corresponding very ample line
bundle on $X$. 
For any global section $W$ of $\cL$, the functor
$$
[MF(X, \cL,W)] \to
[MF(X, \cL,W)]_{\bH}
$$
is an equivalence. 
\end{thm}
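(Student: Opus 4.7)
The functor $[MF(X,\cL,W)] \to [MF(X,\cL,W)]_\bH$ is the identity on objects, so essential surjectivity is immediate. The content is therefore to show that for every pair $\bE,\bF$ of matrix factorizations, the canonical map
\[
\Hom{[MF]}{\bE}{\bF} \longrightarrow \bH^0(X, \HomMF(\bE,\bF))
\]
is a bijection (and likewise in all degrees, once the categories are recognized as triangulated).

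The first step is to check that the functor is well-defined, i.e., that locally contractible objects become zero in $[MF]_\bH$. If $\bE$ is locally contractible, then at each $x \in X$ the complex $\HomMF(\bE,\bF)_x = \HomMF(\bE_x,\bF_x)$ is contractible over $\cO_{X,x}$, so $\HomMF(\bE,\bF)$ is an acyclic complex of quasi-coherent sheaves. Its hypercohomology, computed by the Cech bicomplex $\Gamma(\cU, \HomMF(\bE,\bF))$, therefore vanishes, and by the universal property of the Verdier quotient we obtain the factored functor $\bar F : [MF] \to [MF]_\bH$.

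The heart of the proof is to construct, for each $\bE$, a \emph{Cech-type resolution} $\check\bE \to \bE$ which is an isomorphism in $[MF]$ (its cone is locally contractible) and for which there is a natural identification
\[
\Gamma(X, \HomMF(\check\bE, \bF)) \ \simeq \ \Gamma(\cU, \HomMF(\bE, \bF))
\]
as complexes of abelian groups, functorially in $\bF$. Concretely, I would assemble $\check\bE$ from the restrictions of $\bE$ to finite intersections $U_{i_0 \cdots i_p}$ of the cover $\cU$, weaving them together using the Cech differentials into a totalized matrix factorization. Once such a $\check\bE$ is in hand,
\[
\Hom{[MF]}{\bE}{\bF} \ =\ \Hom{[MF]_\naive}{\check\bE}{\bF} \ =\ H^0\Gamma(X, \HomMF(\check\bE, \bF)) \ =\ \bH^0(X, \HomMF(\bE, \bF)),
\]
where the second equality uses that morphisms \emph{out of} $\check\bE$ are insensitive to locally contractible targets, i.e.\ that $\check\bE$ behaves as a cofibrant model in the Verdier-localization sense.

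The main obstacle lies in this construction: the naive Cech resolution (extension by zero from open intersections) leaves the subcategory of locally free coherent matrix factorizations, since $j_!$ of a locally free sheaf is typically only quasi-coherent. To finesse this, my plan is to pass temporarily to an ambient category of quasi-coherent matrix factorizations (already developed in \cite{BW11a}), build $\check\bE$ there, and then verify that hypercohomology and $[MF]$-morphisms are preserved by this enlargement because the target $\bF$ remains coherent and locally free. As an alternative, one can avoid the resolution altogether and instead run the double-complex spectral sequence of the Cech bicomplex: its $E_1$-page computes naive homotopy classes of maps on each affine open $U_{i_0 \cdots i_p}$ (where $[MF]_\naive = [MF]$ trivially since every sheaf is free of cohomology), and the remaining work is to match the $d_1$-differential with Cech coboundary and verify convergence to both Hom-sets. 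Either route reduces the theorem to a careful bookkeeping of Cech signs and compatibilities between local and global homotopies.
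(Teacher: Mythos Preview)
This theorem is not proved in the present paper; it is quoted from the companion paper \cite{BW11a} (Theorem~4.2 there), with no argument given here. So there is no proof in this document to compare your proposal against.

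Your outline is nonetheless on the right track, but the key technical step has a wrinkle you have not fully resolved. The chain
\[
\rHom_{[MF]}(\bE,\bF)=\rHom_{[MF]_\naive}(\check\bE,\bF)=H^0\Gamma\bigl(X,\HomMF(\check\bE,\bF)\bigr)=\bH^0\bigl(X,\HomMF(\bE,\bF)\bigr)
\]
rests on $\check\bE$ being left-orthogonal in $[MF]_\naive$ to every locally contractible object. For a single affine open $j\colon U\hookrightarrow X$ this would follow from the adjunction $\rHom(j_!(\bE|_U),\bG)\cong\rHom(\bE|_U,\bG|_U)$, and indeed $\bG|_U$ is contractible when $U$ is affine. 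But, as you yourself flag, $j_!$ does not land in quasi-coherent sheaves, and your proposed fix of enlarging to quasi-coherent matrix factorizations does not help with this particular issue: $j_!\cF$ is typically not quasi-coherent at all. The clean repair is to dualize the construction and resolve the \emph{target} by the $j_*$ \v{C}ech complex $\bF\to\check\bF$. Each $j_{i_0\cdots i_p,*}(\bF|_{U_{i_0\cdots i_p}})$ is genuinely quasi-coherent (the inclusion of an affine open in a separated scheme is an affine morphism), the cone of $\bF\to\check\bF$ is locally contractible, and $\check\bF$ is now \emph{right}-orthogonal to locally contractible objects via the $(j^*,j_*)$ adjunction. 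This yields $\rHom_{[MF]}(\bE,\bF)\cong\rHom_{[MF]_\naive}(\bE,\check\bF)$, and the right-hand side is $\bH^0$ by construction. One still has to work in an ambient category allowing quasi-coherent (not locally free) components, but that enlargement is harmless. Your spectral-sequence alternative is also a legitimate route and sidesteps this issue entirely.
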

In the rest of the paper $X$ will always be assumed projective over
an affine scheme and we write $[MF]$ for $[MF]_\bH$.

\subsection{Singularity category}
The {\em singularity category} of a 
scheme $Z$ is the Verdier quotient
$$
\Dsing(Z) := \Db(Z)/\Perf(Z),
$$
where $\Db(Z)$ is the bounded derived category of coherent
sheaves on $Z$ and $\Perf(Z)$ is the full subcategory consisting
of perfect complexes --- i.e., those complexes that are locally
quasi-isomorphic to bounded complexes of free modules of finite rank. This construction was introduced by Buchweitz \cite{Bu87}
in the case when $Z$ is affine
and rediscovered by Orlov \cite{MR2101296}.

We need the following generalization:
\begin{defn}
\label{defn_rperf}
Let $i: Z \into X$ be a closed immersion of finite flat dimension.
An object $\cF$ in $\Db(Z)$ is
\emph{relatively perfect} on $Z$ if
$i_* \cF$ is perfect on $X$. We write $\rperf Z X$ for the full
subcategory of $\Db( Z)$ whose objects are relatively perfect
on $X$.

Since $i$ has finite flat dimension, $\Perf(Z)$ is a thick subcategory
of $\rperf Z
X$. We define the {\em relative singularity category of} $i$ to be
the Verdier quotient
$$
\rDsg(Z \into X) := \frac{\rperf Z X}{\Perf(Z)}.
$$
The canonical functor
$$ 
\rDsg(Z \into X) \to \Dsing(Z)
$$
is fully faithful and we thus identify $\rDsg(Z \into X) $ with
a full subcategory of $\Dsing(Z)$.  (A different definition of
``relative singularity category'' is given by Positselski in \cite{1102.0261}. There
is a fully faithful functor from the version given here to
Positselski's version but in general the two need not coincide; see
\cite[6.9]{BW11a}.)
\end{defn}

If $X = \Spec Q$ is affine, so that $Z = \Spec R$ where $R = Q/I$, we write 
$\rDsg(Q \onto R)$ for $\rDsg(\Spec R \into \Spec Q)$.
In this case, a finitely generated $R$-module $M$ is in $\rDsg(Q
\onto R)$ if and only if it has finite projective dimension as a $Q$-module.

Recall that $\cL$ is a line bundle on $X$ and $W$ is a
global section of $\cL$. We now set $Z \into X$ to be the zero
subscheme of $W$ (i.e., the subscheme with ideal sheaf given
as the image of the map $W^*: \cL^* \to \cO_X$).

For an object $\E = (\cE_1 \xra{e_1} \cE_0 \xra{e_0}
\cE_1(1))$ of $MF(X, \cL, W)_\exact$,
we define the \emph{cokernel of $\bE$},  written $\coker(\bE)$, to be $\coker(e_1)$. Multiplication by $W$ on
$\coker(e_1)$ is zero and so we regard $\coker \bE$ as a
coherent sheaf on $Z$. When $W$ is a regular, i.e.\ the map $\cO_X \xra{W} \cL$ is
injective, there is a finite resolution of $i_* \coker (\bE)$ on $X$ by locally
free sheaves:
\[ 0 \to \cE_1 \xra{e_1} \cE_0 \to i_* \coker( \bE) \to 0, \]
and hence $i_* \coker( \bE)$ is perfect on $X$. Thus the image of
$\coker(\bE)$ in $\Dsing(Z)$ is in $\rDsg(Z \into
X)$. This assignment is clearly natural and by
\cite[3.12]{Polishchuk:2010ys} there is an induced triangulated functor
$$
\coker: [MF(X, \cL, W)] \to \rDsg(Z \into X)$$
$$\bE \mapsto \coker(\bE).
$$

The following was first proved by Polishchuk and Vaintrob
in the case when $X$ is
regular, see \cite[3.14]{Polishchuk:2010ys}, and there are analogues in
\cite{1101.5847,1101.4051,1102.0261}. The version below is \cite[6.3]{BW11a}.
\begin{thm}
\label{our_equiv}
Let $X$ be a scheme that is projective over a Noetherian ring of
finite Krull dimension,
$\cL = \cO_X(1)$ be the corresponding very ample line bundle, and $W$
be a
regular global section of $\cL$.
Define $i: Z \into
X$ to be the zero subscheme of $W$. Then the 
triangulated functor
$$
\coker: [MF(X, \cL, W)] \to \rDsg(Z \into X)
$$
is an equivalence. In particular, if $X$ is
regular, there is an equivalence of triangulated categories
$$
\coker: [MF(X, \cL, W)]
  \map{\cong} \Dsing(Z).
$$
\end{thm}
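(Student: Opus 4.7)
The plan is to establish the equivalence in two pieces: essential surjectivity via a folding construction that produces matrix factorizations out of finite locally free resolutions on $X$, and full faithfulness via the Cech-hypercohomology description of morphisms in $[MF]$ provided by the preceding theorem.

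For essential surjectivity, let $\cF$ be a coherent sheaf on $Z$ representing a class in $\rperf{Z}{X}$, so that $i_*\cF$ is perfect on $X$. Choose a finite resolution $\cP_\bullet \to i_*\cF$ by locally free coherent sheaves of finite rank. Because $W$ annihilates $i_*\cF$, multiplication by $W : \cP_\bullet \to \cP_\bullet(1)$ is null-homotopic via some $h : \cP_\bullet \to \cP_{\bullet+1}(1)$ with $dh + hd = W \cdot \id$. Truncating $\cP_\bullet$ far enough to the left and folding the $\mathbb{Z}$-graded pair $(d, h)$ into a $\mathbb{Z}/2$-graded object yields a matrix factorization $\bE$ whose cokernel equals a sufficiently high syzygy of $\cF$ on $Z$, and hence agrees with $\cF$ in $\Dsg(Z)$, thus also in $\rDsg(Z \into X)$.

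For full faithfulness, the preceding theorem identifies $\Hom_{[MF]}(\bE, \bF) \cong \bH^0(X, \HomMF(\bE, \bF))$, so the task is to match this hypercohomology with $\Hom_{\rDsg}(\coker \bE, \coker \bF)$. Use the two-term resolution $0 \to \cE_1 \to \cE_0 \to i_*\coker(\bE) \to 0$ on $X$ (and similarly for $\bF$) to compute $\RHom_X(i_*\coker \bE, i_*\coker \bF)$ in terms of the underlying sheaves of $\bE$ and $\bF$, and identify the result, after folding, with a truncation of $\HomMF(\bE, \bF)$. The localization $\rDsg(Z \into X) = \rperf{Z}{X}/\Perf(Z)$ corresponds, on the hypercohomology side, to stabilizing in the $(-)(1)$-direction, so morphisms in $\rDsg$ are computed by $\bH^0$ of the full twisted periodic mapping complex. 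The final assertion about regular $X$ then follows since in that case every bounded complex on $Z$ has finite Tor-dimension over $\cO_X$, giving $\Db(Z) = \rperf{Z}{X}$ and thus $\rDsg(Z \into X) = \Dsg(Z)$.

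The main obstacle is the full-faithfulness step. Locally on an affine patch everything reduces to Buchweitz's classical affine equivalence \eqref{L826}, but globally one must carefully control the hypercohomology spectral sequence of $\HomMF(\bE, \bF)$, simultaneously tracking its internal differential and the Cech differential, and verify that the twisted $(-)(1)$-periodicity exactly matches the effect of quotienting by $\Perf(Z)$. Ensuring the constructed comparison map is natural enough to respect compositions and the triangulated structure is also a nontrivial piece of bookkeeping, and is where the hypothesis that $X$ is projective over a Noetherian ring of finite Krull dimension (so that Cech cohomology is well-behaved and $\Perf(X)$ is controlled) enters essentially.
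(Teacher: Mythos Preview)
The paper does not actually prove this theorem. It is imported from the authors' companion paper \cite{BW11a}, where it appears as Theorem~6.3; the present paper merely records the statement and its provenance (see the paragraph preceding the theorem, which attributes earlier versions to Polishchuk--Vaintrob, Orlov, Lin--Pomerleano, and Positselski). So there is no in-paper proof to compare your proposal against.

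That said, your outline is broadly the standard strategy and is in the spirit of the cited references. A couple of remarks on the sketch itself. For essential surjectivity, you only treat coherent sheaves $\cF$, not arbitrary objects of $\rperf{Z}{X}$; one must first reduce a bounded complex to a single sheaf (e.g.\ by passing to a high syzygy of a locally free resolution on $Z$), and this reduction should be argued independently of the theorem. Also, the folding you describe produces a matrix factorization whose cokernel is a shift of $\cF$ in $\Dsg(Z)$, not $\cF$ itself; this is harmless but should be said. For full faithfulness, you correctly identify the crux but do not supply the actual argument: one needs a precise natural isomorphism between $\bH^q(X,\HomMF(\bE,\bF))$ and $\Hom_{\rDsg(Z\into X)}(\coker\bE,(\coker\bF)[q])$, and the passage from $\RHom$ in $\Db(Z)$ to morphisms in the Verdier quotient by $\Perf(Z)$ is exactly where the work lies. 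Your appeal to ``stabilizing in the $(-)(1)$-direction'' is suggestive but not a proof; in \cite{BW11a} this is handled by an explicit comparison of $\HomMF$ with a suitable $\uHom$-complex on $Z$ (cf.\ the results cited here as \cite[5.2, 5.10]{BW11a}).
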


\begin{cor}
 \label{complexes_isom_to_mod_rel_sing}
 Let $X$, $Z$ and $i$ be as in Theorem \ref{our_equiv}. If $\cM$ is any object of $\rDsg(Z \into X)$,
 then there is a coherent sheaf $\cF$ on $Z$ and an isomorphism $\cM
 \cong \cF$ in the category $\rDsg(Z \into X)$. In fact, $\cF$ may be chosen so that
 $i_*(\cF)$ admits a resolution of length one by locally free coherent
 sheaves on $X$. 
\end{cor}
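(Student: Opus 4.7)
The plan is to derive the corollary essentially as a direct unpacking of Theorem \ref{our_equiv} together with the explicit short exact sequence that was used to define the cokernel functor. Since that theorem asserts the functor
\[
\coker: [MF(X, \cL, W)] \to \rDsg(Z \into X)
\]
is an equivalence of triangulated categories, any object $\cM$ of the target is isomorphic to one in the essential image of $\coker$. So the first step is to choose, using essential surjectivity, a matrix factorization $\bE = (\cE_1 \xra{e_1} \cE_0 \xra{e_0} \cE_1(1))$ such that $\coker(\bE) \cong \cM$ in $\rDsg(Z \into X)$.

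Next, I would take $\cF := \coker(e_1)$, which was defined as a coherent sheaf on $X$ and then observed to be annihilated by $W$, hence viewed as a coherent sheaf on $Z$. By construction $\cF$ represents $\coker(\bE)$, so under the equivalence $\cM \cong \cF$ in $\rDsg(Z \into X)$, giving the first assertion of the corollary.

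For the second assertion, I would simply recall the short exact sequence
\[
0 \to \cE_1 \xra{e_1} \cE_0 \to i_* \cF \to 0
\]
that was written down immediately before Theorem \ref{our_equiv}, using that $W$ is a regular section so that $e_1$ is injective. Both $\cE_1$ and $\cE_0$ are locally free coherent sheaves on $X$, so this is precisely a resolution of length one of $i_*(\cF)$ by locally free coherent sheaves on $X$, as required.

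There is not really a substantive obstacle: all the work has been done in setting up the cokernel functor and in establishing its essential surjectivity in Theorem \ref{our_equiv}. The only thing to be mildly careful about is remembering that $\coker(\bE)$ is defined as a sheaf on $Z$ (not merely as an object of a quotient category) and that the resolution displayed above lives on $X$ rather than $Z$, which is exactly what the statement asks for.
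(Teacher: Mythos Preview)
Your proof is correct and follows exactly the same approach as the paper: pick a matrix factorization $\bE$ with $\coker(\bE) \cong \cM$ using the equivalence of Theorem~\ref{our_equiv}, set $\cF = \coker(e_1)$, and read off the length-one locally free resolution $0 \to \cE_1 \xra{e_1} \cE_0 \to i_*\cF \to 0$ from the regularity of $W$. The paper's proof is simply a terser version of what you wrote.
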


\begin{proof}
 Let $\bE$ be the image of $\cM$ under some inverse of $\coker$. Then
 $\cM \cong \coker \bE$, and $\coker \bE$ is a coherent sheaf that
 admits a resolution of length one by locally free coherent sheaves.
\end{proof}

\subsection{Orlov's Theorem}
\label{assumptions}
We now fix $Q$ to be a commutative Noetherian ring of
finite Krull dimension, $\vf = (f_1, \dots, f_c)$ a
$Q$-regular sequence, and $R = Q/(\vf)$. Set
\[
\ps Q = \Proj \left ( Q[T_1, \dots, T_c] \right ) \quad \text{and} \quad \ps R = \Proj \left ( R[T_1, \dots, T_c] \right )
\] 
where each $T_i$ has degree 1. Define $W = f_1 T_1 + \ldots + f_c T_c
\in \Gamma(\ps Q, \cO_{\ps Q}(1))$ and
\[ Y = \Proj \left ( Q[T_1,
\ldots, T_c]/W \right ) \into \ps Q. \]

The natural surjection $Q \onto R$ induces an inclusion
$\RQ: \ps R \into \ps Q$. This is locally a
complete intersection of codimension $c-1$
and factors through the
map $\YQ: Y \into \ps Q$. We have a commutative diagram of schemes
\begin{equation} \label{E621}
\xymatrix{\ps R \ar@/^1.5pc/[rr]^{\RQ} \ar[r]^(.6){\RY} \ar[d]_{\pi}& Y \ar[r]^(.3){\YQ} &
  \ps Q \ar[d] \\
 \Spec R \ar[rr] & & \Spec Q} 
\end{equation}
where the vertical arrows are the canonical proper maps
and each horizontal arrow is locally a complete intersection and thus has
finite flat dimension. Since $\RY$ is a finite map, there is a right
adjoint to $\RY_*$, see \cite[III.6]{MR0222093}, which we write as
$\RY^\flat: \Db(Y) \to \Db(\ps R)$.

The following was proved by Orlov in \cite[Theorem
2.1]{MR2437083} under the added assumption that $Q$ is regular and
equicharacteristic, i.e. contains a field. A proof of the version
below may be found in the appendix; see Theorem \ref{Orlov_thm}.
\begin{thm}\label{orlov}
The functor $\R \pi_* \RY^\flat: \Db(Y) \to \Db(R)$ induces an
equivalence of triangulated categories
$$
\Phi: \rDsg(Y \subset \ps Q)
\map{\cong} 
\rDsg(Q \onto R).
$$ 
In particular, if $Q$ is regular, we have an equivalence
$$
\Phi: \Dsg(Y) \cong \Dsg(R).
$$
\end{thm}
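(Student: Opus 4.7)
The plan is to adapt Orlov's semi-orthogonal decomposition strategy \cite{MR2437083} to the relative setting, working throughout with perfect and relatively perfect complexes in place of arbitrary coherent sheaves, so as to drop both the regularity and the equicharacteristic assumptions on $Q$.

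First, I would verify that $G := \R\pi_*\RY^\flat$ descends to the relative singularity categories. The map $\pi\colon \ps R \to \Spec R$ is smooth and proper of relative dimension $c-1$, so $\R\pi_*$ preserves perfectness. The map $\RY$ is finite and locally a complete intersection of codimension $c-1$, hence of finite Tor-dimension, and one has $\RY^\flat(-) \cong \L\RY^*(-) \otimes \omega_\RY[c-1]$ for a line bundle $\omega_\RY$ on $\ps R$. Thus $G(\Perf(Y)) \subset \Perf(R)$. For the relative statement, base-change and the projection formula applied to the square \eqref{E621} yield $G(\rperf{Y}{\ps Q}) \subset \rperf{\Spec R}{\Spec Q}$, giving a triangulated functor $\Phi\colon \rDsg(Y \subset \ps Q) \to \rDsg(Q \onto R)$.

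Next, I would construct a candidate quasi-inverse $\Psi\colon \rDsg(Q \onto R) \to \rDsg(Y \subset \ps Q)$ of the form $\Psi = \RY_*\L\pi^*$, possibly up to a twist by some $\cO(j)$ and a shift. That $\Psi$ takes $\rperf{\Spec R}{\Spec Q}$ into $\rperf{Y}{\ps Q}$ follows from a Koszul-type resolution of $\RY_*\cO_{\ps R}$ in $Y$, coming from the regular sequence $T_1, \dots, T_c$ modulo $W$. Then I would check $\Phi\circ \Psi \cong \id$ and $\Psi\circ \Phi \cong \id$. For the first, the composite $\R\pi_*\RY^\flat\RY_*\L\pi^*$ unwinds via adjunction, the projection formula, and the Koszul resolution into $\L\pi^*$ twisted by various $\cO(j)$; since $\R\pi_*\cO(j) = 0$ for $-c+1 \le j \le -1$ and $\R\pi_*\cO = R$, the composite becomes the identity plus terms that are perfect and hence die in $\rDsg(Q \onto R)$. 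For the second, Beilinson's resolution of the diagonal of $\ps Q$, restricted to $Y$ along $\YQ$, decomposes any object of $\rperf{Y}{\ps Q}$ into a finite iterated extension of sheaves of the form $\L\YQ^*\cO(j) \otimes \pi^*M$ for perfect $Q$-complexes $M$, on which $\Psi\Phi \cong \id$ is immediate.

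The main obstacle will be carrying out these arguments without the crutch of regularity. Orlov's original proof freely uses that every coherent sheaf on $\ps Q$ has a finite locally free resolution, which fails when $Q$ is only Noetherian of finite Krull dimension. The substitute is to restrict attention to perfect complexes on $\ps Q$ (equivalently, relatively perfect complexes on $Y$), for which the Beilinson decomposition still applies, at the cost of tracking perfectness at every step. The most delicate point will be essential surjectivity of $\Phi$: rather than producing a preimage as a coherent sheaf, one constructs it as a perfect complex on $Y$, and here the equivalence with matrix factorizations from Theorem \ref{our_equiv} is a natural source of such preimages via the $\coker$ functor.
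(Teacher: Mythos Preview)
Your setup and the first half are on target: the paper, too, works with the adjoint pair $(\beta_*\pi^*,\R\pi_*\beta^\flat)$, and $\Phi\circ\Psi\cong\id$ (equivalently, full faithfulness of $\Psi=\beta_*\pi^*$) is exactly Orlov's Proposition~2.2, which the appendix notes goes through verbatim without regularity or a ground field.

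The gap is in your treatment of $\Psi\circ\Phi\cong\id$. Beilinson's resolution of the diagonal lives on $\ps Q\times_Q\ps Q$; restricting along $\YQ$ decomposes $\L\YQ^*\YQ_*\cG$, not $\cG$ itself. The pieces you name, $\cO_Y(j)\otimes M$ with $M$ perfect over $Q$, are perfect on $Y$ and hence zero in $\rDsg(Y\subset\ps Q)$; what you recover from the triangle $\cG(-1)\to\L\YQ^*\YQ_*\cG\to\cG$ is only the twisted periodicity $\cG(1)\cong\cG[2]$, not the counit isomorphism. Extracting $\cG$ from this data is precisely where Orlov's original argument leans on regularity, and your sketch does not supply a substitute.

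The paper avoids this computation. Instead of verifying $\Psi\Phi\cong\id$ directly, it proves that $\bar\Phi=\R\pi_*\beta^\flat$ has trivial kernel on $\rDsg(Y\subset\ps Q)$ (Lemma~\ref{kernel_is_zero}) and then invokes the formal fact that an adjoint pair with one side fully faithful and the other of trivial kernel consists of mutually inverse equivalences. Matrix factorizations enter this kernel computation, but in a different role than you envision: given $\cG$, one represents it as $\coker\bE$ via Theorem~\ref{our_equiv}, uses the periodicity $\cG(n)\cong\cG[2n]$ in $\rDsg$ to deduce that $\R p_*(i^*\cG(n))$ is perfect for \emph{all} $n\in\Z$, and then applies a Castelnuovo--Mumford regularity argument (Lemma~\ref{perfect_iff_all_twists}) to conclude $i^*\cG$ is perfect on $\ps R$; a local change-of-rings argument then forces $\cG$ to be perfect on $Y$.
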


\begin{proof}
In the notation of Theorem \ref{Orlov_thm}, let $S = \Spec Q$,
let $\cE$ be a free $Q$-module of rank $c$, and let $s = (f_1, \ldots, f_c) \in
\Gamma( S, \cE) = Q^c$.
\end{proof}

As an immediate consequence of Theorems
\ref{our_equiv} and \ref{orlov} we have:

\begin{cor} \label{orlovcor}
There is an equivalence of triangulated categories
$$
\Psi = \Phi \circ \coker: [MF(\ps Q, \cO(1), W)] \map{\cong} \rDsg(Q \onto R)
$$  
such that
\[ \Psi( \bE ) = \R \pi_* \RY^\flat \coker \bE \in \Dsing(R).\] In particular, if $Q$ is regular, we have
an equivalence of triangulated categories
$$
\Psi\!:\![MF(\ps Q, \cO(1), W)] \map{\cong} \Dsg(R).
$$  
\end{cor}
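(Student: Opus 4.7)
The statement is essentially a formal corollary obtained by composing the two equivalences just cited, so the plan is to verify that the hypotheses of Theorems \ref{our_equiv} and \ref{orlov} apply to the present setup and then compose, unwrapping the definition of $\Phi$ to get the stated formula for $\Psi$.

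First I would set $X = \ps Q = \Proj(Q[T_1, \ldots, T_c])$, which is projective over the Noetherian ring $Q$ of finite Krull dimension, with corresponding very ample line bundle $\cO_X(1)$. The one substantive point is to check that $W = \sum_i f_i T_i$ is a \emph{regular} global section of $\cO_X(1)$ in the sense of Theorem \ref{our_equiv}, i.e.\ that the map $\cO_X \xra{W} \cO_X(1)$ is injective. This can be checked on the standard affine cover: on $D_+(T_i) = \Spec Q[T_1/T_i, \ldots, T_c/T_i]$, the section $W$ is given (after trivializing $\cO_X(1)$ via $T_i$) by $f_i + \sum_{j \ne i} f_j (T_j/T_i)$, and this is a non-zerodivisor in $Q[T_j/T_i : j \ne i]$ precisely because $f_1, \ldots, f_c$ is a $Q$-regular sequence (any permutation of a regular sequence in a Noetherian ring remains regular, so in particular $f_i$ is a non-zerodivisor modulo the other $f_j$'s, and extending to the polynomial ring in the $T_j/T_i$ preserves this).

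With $W$ regular, Theorem \ref{our_equiv} applied to $(\ps Q, \cO(1), W)$ gives an equivalence of triangulated categories
\[
\coker : [MF(\ps Q, \cO(1), W)] \xra{\cong} \rDsg(Y \into \ps Q),
\]
since $Y$ is by definition the zero subscheme of $W$. Separately, Theorem \ref{orlov} provides
\[
\Phi = \R\pi_* \RY^\flat : \rDsg(Y \into \ps Q) \xra{\cong} \rDsg(Q \onto R).
\]
Composing these gives the first equivalence of the corollary, and the formula $\Psi(\bE) = \R\pi_* \RY^\flat \coker \bE$ follows immediately by tracing a matrix factorization through the composition.

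For the final assertion, when $Q$ is regular of finite Krull dimension, every finitely generated $R$-module has finite projective dimension over $Q$; more generally, every object of $\Db(R)$ lies in the subcategory of objects of finite $Q$-flat dimension. Consequently $\rDsg(Q \onto R) = \Dsg(R)$, and the composition becomes the claimed equivalence $\Psi : [MF(\ps Q, \cO(1), W)] \xra{\cong} \Dsg(R)$. There is no real obstacle here, since all the work has been done in Theorems \ref{our_equiv} and \ref{orlov}; the only step that requires any thought is the verification that $W$ defines a regular section, and this reduces directly to the hypothesis that $f_1, \ldots, f_c$ is $Q$-regular.
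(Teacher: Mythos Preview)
Your approach matches the paper's, which simply records the corollary as an immediate consequence of Theorems~\ref{our_equiv} and~\ref{orlov} without further argument; you are more careful in that you actually verify the regularity hypothesis on $W$.

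One small correction: the parenthetical claim that ``any permutation of a regular sequence in a Noetherian ring remains regular'' is false without a locality (or Jacobson radical) hypothesis, so your justification on $D_+(T_i)$ is not quite right as stated. A cleaner argument: the associated primes of $S = Q[T_1,\ldots,T_c]$ are exactly $\fp S$ for $\fp \in \operatorname{Ass}(Q)$, and $W = \sum_j f_j T_j$ lies in $\fp S$ only if every $f_j \in \fp$; since $f_1$ is a non-zerodivisor in $Q$ it lies in no associated prime, so $W$ is a non-zerodivisor in $S$ and hence a regular section of $\cO(1)$ on $\ps Q$. With that fix, your proof is complete.
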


The inverse equivalence $\rDsg(Q \onto R) \to \rDsg(Y \subset \ps Q)$
is induced by the functor $\R \RY_* \circ
\L \pi^* \cong \RY_* \circ \pi^*$, but the inverse of $\coker: [MF] \map{\cong} \rDsg(Y
\subset \ps Q)$ requires making choices, and hence so does an inverse
equivalence to $\Psi$.
We fix one such inverse equivalence $\Psi^{-1}: \rDsg(Q \onto R) \to
[MF]$, and for $M \in \rDsg(Q \onto R)$ we write $\bE_M \in [MF]$ for
$\Psi^{-1}(M)$. Recall that
specifying the inverse $\Psi^{-1}$ amounts to picking, for each object
$M$ of $\rDsg(Q \onto R)$, a matrix factorization $\bE_M$ together with an
isomorphism $\Psi(\bE_M) \map{\cong} M$ in $\rDsg(Q \onto R)$. 
In Section \ref{sec:inverse} we give one explicit way of
  constructing $\bE_M$ using a system of higher
  homotopies on a $Q$-free resolution of $M$.

\begin{defn} \label{stable_ext_defn} Let $A$ be any commutative Noetherian ring and let $M$ and $N$ be complexes of $A$-modules with bounded
  finitely generated homology. For $q \in \Z$, the {\em $q$-th stable
$\rExt$-module} of $M$ and $N$ is
$$
\sExt^q_A(M,N) := \rHom_{\Dsg(A)}(M, N[q]).
$$
\end{defn} 
The definition above first appeared in \cite{Bu87}
  where the ring $A$ was assumed to be
  Gorenstein. We show in Appendix \ref{compl_resl_sec} that, as in \cite{Bu87}, the stable
  $\operatorname{Ext}$-modules may be computed using a complete
  resolution of $M$ when such a resolution exists.

Note that for all $q \in \Z$ there is a natural map
\begin{equation}
\rExt^q_A(M,N) \to \sExt^q_A(M,N),\label{eq:normal_to_stable}
\end{equation}
induced by the triangulated functor $\Db(A) \to \Dsg(A)$. 

The next result follows immediately from Corollary \ref{orlovcor} and
the definition of morphisms in $[MF]$.
\begin{cor} \label{Cor830}
Let $M$ and $N$ be objects of $\rDsg( Q \onto R)$. For all $q \in \Z$ there is an 
isomorphism, natural in $M$ and $N,$
$$
\sExt^q_R(M,N) 
\cong
\bH^q(\ps Q, \HomMF(\bE_M, \bE_N)).
$$
In particular, if $Q$ is regular, such an isomorphism holds for all
finitely generated $R$-modules.
\end{cor}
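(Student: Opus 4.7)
The plan is to chain together the two ingredients the author has already set up: the triangulated equivalence $\Psi$ of Corollary \ref{orlovcor}, and the hypercohomology description of morphism sets in $[MF] = [MF]_{\bH}$ supplied by \cite[Theorem 4.2]{BW11a}. Given these, the statement should follow essentially immediately, as the author asserts.

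In detail, the chain of natural isomorphisms runs as follows. By Definition \ref{stable_ext_defn},
$$
\sExt^q_R(M,N) = \rHom_{\Dsg(R)}(M, N[q]).
$$
Since $M$ and $N$ lie in the full subcategory $\rDsg(Q \onto R) \subseteq \Dsg(R)$, and $\Psi: [MF] \to \rDsg(Q \onto R)$ is a triangulated equivalence with $\Psi(\bE_M) \cong M$ and $\Psi(\bE_N) \cong N$, applying a chosen inverse $\Psi^{-1}$ gives
$$
\rHom_{\Dsg(R)}(M, N[q]) \cong \rHom_{[MF]}(\bE_M, \bE_N[q]).
$$
Invoking the identification $[MF] = [MF]_{\bH}$, the right-hand side equals
$$
\bH^0(\ps Q, \HomMF(\bE_M, \bE_N[q])) \cong \bH^q(\ps Q, \HomMF(\bE_M, \bE_N)),
$$
where the last isomorphism arises from a canonical identification $\HomMF(\bE_M, \bE_N[q]) \cong \HomMF(\bE_M, \bE_N)[q]$ of twisted periodic complexes, after which the hypercohomology degree shifts in the obvious way. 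Naturality in $M$ and $N$ is inherited from the functoriality of $\Psi$, of the mapping complex construction $\HomMF(-,-)$, and of hypercohomology, so no additional work is required for that aspect.

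The only nontrivial bookkeeping is the identification $\HomMF(\bE_M, \bE_N[q]) \cong \HomMF(\bE_M, \bE_N)[q]$. This is a direct unpacking of the shift functor on $[MF]_\naive$ recalled in Section \ref{mf_defn} following \cite[2.5]{BW11a}: shifting a matrix factorization $\bF$ by one swaps its two components (with a sign) and twists by $\cL$, and these operations correspond exactly to the action of the shift on the twisted periodic complex built from the formulas for $\partial^{-1}$ and $\partial^0$ in Definition \ref{def_hom_complex}. I expect this matching of twists to be the main --- though routine --- obstacle, since one must check compatibility of the $\cL$-twist in the shift functor with the $\cL$-twist built into the twisted periodic structure. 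The final sentence of the corollary is then automatic, since when $Q$ is regular we have $\rDsg(Q \onto R) = \Dsg(R)$, and every finitely generated $R$-module is an object of this category.
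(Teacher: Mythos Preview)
Your proposal is correct and matches the paper's own approach: the corollary is stated to follow immediately from Corollary \ref{orlovcor} and the definition of morphisms in $[MF]$, and you have spelled out precisely that chain of identifications. The only elaboration you add beyond the paper is the explicit verification that $\HomMF(\bE_M, \bE_N[q]) \cong \HomMF(\bE_M, \bE_N)[q]$, which is indeed routine bookkeeping with the shift functor.
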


Fix an object $\E = (\cE_1 \xra{e_1} \cE_0 \xra{e_0}
\cE_1(1))$ of $MF(\ps Q, \cO(1), W)$. We write $\YQ^*\bE$ for the following complex of locally
free coherent sheaves on $Y$:
\begin{equation}
\cdots \to \YQ^*\cE_0(-1) \map{\YQ^* e_0(-1)} \YQ^*\cE_1 \map{\YQ^* e_1}
\YQ^*\cE_0 \map{\YQ^* e_0} \YQ^*\cE_1(1) \map{\YQ^* e_1(1)} \YQ^*\cE_0(1) \to
\cdots
\label{mf_pullback_defn}
\end{equation}
where $\YQ: Y \into \ps Q$ is the natural inclusion. This
is a complex since multiplication by $W$ is the zero map on $Y$.

The following result is an analogue of Corollary
\ref{Cor830} which relaxes the assumption that
$N$ is perfect over $Q$.

\begin{prop} \label{orlovcor2} 
Let $M$ be an object in $\rDsg(Q \onto R)$ and let $N$ be any object in
$\Dsg(R)$. Set $\cN$ to be the image of
$\beta_* \pi^* N$
in $\Dsg(Y)$.
For all $q\in \Z$ there are
isomorphisms that are natural in both arguments:
\begin{align}
\nonumber
\sExt^q_R(M,N) 
&\cong \Hom {\Dsg(R)} {\Psi(\bE_M)} {N[q]} \\
&\cong \Hom {\Dsg(Y)} {\coker
  \bE_M} {\cN[q]} \label{isoms}\\ 
&\cong
\bH^q\left (Y, \uHom_{\cO_Y}(\YQ^* \bE_M, \cN) \right ).\nonumber
\end{align}
\end{prop}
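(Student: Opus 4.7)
The plan is to prove the three isomorphisms in sequence. The first is immediate from Definition~\ref{stable_ext_defn}: $\sExt_R^q(M,N) = \Hom{\Dsg(R)}{M}{N[q]}$, and by the choice of $\bE_M = \Psi^{-1}(M)$ there is a built-in isomorphism $\Psi(\bE_M) \map{\cong} M$ in $\rDsg(Q \onto R) \subseteq \Dsg(R)$, which substitutes to give the first identification. For the second, write $\Psi = \Phi \circ \coker$ so that $\Psi(\bE_M) = \Phi(\coker \bE_M)$, and invoke the equivalence $\Phi$ of Theorem~\ref{orlov}: when $Q$ is regular, $\rDsg = \Dsg$ throughout and the quasi-inverse of $\Phi$ is induced by $\beta_* \pi^*$, which carries $N$ to $\cN$ by definition, yielding the claim. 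In the general relative case, where $N$ need not itself belong to $\rDsg(Q \onto R)$, one argues from the adjunction $\beta_* \L\pi^* \dashv \R\pi_* \beta^\flat$ on $\Db$ together with the fact that on $\rDsg$ this adjunction becomes an adjoint equivalence; since $\coker \bE_M$ is relatively perfect, the required Hom-isomorphism survives passage to the Verdier quotients.

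The third isomorphism is the main content. The key observation is that the doubly-infinite complex $\YQ^*\bE_M$ of \eqref{mf_pullback_defn} is a \emph{complete resolution} of $\coker \bE_M$ as an $\cO_Y$-module: multiplication by $W$ vanishes on $Y$ so it is a genuine complex; every term is locally free; and it is acyclic in every degree because $Y$ is locally a hypersurface in $\ps Q$ and the unfolded pullback of a matrix factorization to the zero locus of $W$ is totally acyclic by classical Eisenbud theory. Granted this, I would compute $\bH^q(Y, \uHom_{\cO_Y}(\YQ^*\bE_M, \cN))$ via the Cech bicomplex for the affine cover $\cU$ fixed in Section~\ref{setup}: on each affine piece, the standard identification of stable Ext via a complete resolution (Appendix~\ref{compl_resl_sec}) applies degreewise, and the horizontal Cech assembly reproduces the sheaf-theoretic hyper-cohomology. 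A parallel Cech computation identifies $\Hom{\Dsg(Y)}{\coker \bE_M}{\cN[q]}$ with the same bicomplex cohomology, in direct analogy with the Cech-$\HomMF$ model of $[MF]_\bH$-Hom groups used in \cite[Theorem~4.2]{BW11a}.

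The main obstacle will be this final comparison, since $\cN$ is an arbitrary object of $\Dsg(Y)$ rather than the cokernel of a matrix factorization, so the explicit $\HomMF$-complex of Definition~\ref{def_hom_complex} does not apply directly. One handles this by working instead with the internal Hom $\uHom_{\cO_Y}(\YQ^*\bE_M, \cN)$ and verifying convergence of the resulting bicomplex spectral sequence: local freeness of each term of $\YQ^*\bE_M$ ensures exactness in the first argument, and a truncation reduction on $\cN$ (replacing $\cN$ by a single coherent-sheaf representative where available via Corollary~\ref{complexes_isom_to_mod_rel_sing}) reduces the comparison to the bounded-below situation where the spectral sequence converges strongly to the desired Hom-group.
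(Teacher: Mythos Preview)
Your first isomorphism matches the paper's exactly. For the second, the paper's argument is close to yours but uses a slightly different mechanism: it applies $\beta_*\pi^*$ to both slots and then simplifies the source via the counit isomorphism $\beta_*\pi^*\,\R\pi_*\beta^\flat\,\coker\bE_M \xra{\cong} \coker\bE_M$ (valid because $\coker\bE_M$ lies in $\rDsg(Y\subset\ps Q)$). The point you are glossing over is why applying $\beta_*\pi^*$ to both slots is an isomorphism when $N$ is \emph{not} assumed to be in $\rDsg(Q\onto R)$. The adjunction $\beta_*\L\pi^* \dashv \R\pi_*\beta^\flat$ that you invoke goes the wrong way for this: it identifies $\Hom_{\Dsg(Y)}(\beta_*\pi^* A, B)$ with $\Hom_{\Dsg(R)}(A, \R\pi_*\beta^\flat B)$, not $\Hom_{\Dsg(R)}(\R\pi_*\beta^\flat C, N)$ with $\Hom_{\Dsg(Y)}(C, \beta_*\pi^* N)$. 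Saying the adjunction becomes an equivalence on $\rDsg$ does not help, since $N$ may lie outside $\rDsg$. What is actually needed is that $\beta_*\pi^*$ is \emph{fully faithful on all of $\Dsg(R)$}; this follows from the proof of Theorem~\ref{Orlov_thm} (the unit $\id \to \Phi_Z^*\Phi_Z$ is an isomorphism on $\Db$, and $\beta_*\pi^*$ preserves perfects, so one descends to $\Dsg$ via \cite[1.1]{MR2437083}).

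For the third isomorphism the paper simply cites \cite[5.10]{BW11a}, which establishes precisely
\[
\Hom_{\Dsg(Z)}(\coker\bE,\cN[q]) \;\cong\; \bH^q\!\left(Z,\uHom_{\cO_Z}(\gamma^*\bE,\cN)\right)
\]
for arbitrary $\cN\in\Db(Z)$. Your sketch is morally in the spirit of how such a statement is proved, but it is not a proof as written: you correctly identify the main obstacle (that $\cN$ need not be $\coker$ of a matrix factorization), yet your proposed fix via Corollary~\ref{complexes_isom_to_mod_rel_sing} only applies to objects of $\rDsg(Z\into X)$, not to arbitrary $\cN\in\Dsg(Y)$. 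One can instead replace $\cN$ by a shift of a single coherent sheaf using a locally free resolution and brutal truncation (valid in $\Dsg(Y)$ for any $\cN$), after which the Cech/spectral-sequence comparison you outline can be made to converge. In short, your route for the third isomorphism would work with more care, but the paper avoids this entirely by invoking the ready-made result from \cite{BW11a}.
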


\begin{proof}
By definition $\Psi( \bE_M ) = \R \pi_*
\beta^\flat \coker(\bE_M) \cong M$ in $\Dsg(R)$. This gives the first
isomorphism in \eqref{isoms}.

The functor $\beta_* \pi^*$ induces an equivalence $\rDsg(Q \onto R) \to \rDsg(Y
\into X)$. This is an inverse to $\R \pi_*
\beta^\flat$, so the natural map $\beta_* \pi^* \R \pi_*
\beta^\flat \coker(\bE_M) \to \coker(\bE_M)$ is an isomorphism in
$\Dsg(Y)$.
This gives
\begin{align*} 
\Hom {\Dsg(R)} {\Psi(\bE_M)[-q]} N &= \Hom {\Dsg(R)} {\R \pi_*
\beta^\flat \coker(\bE_M)[-q]} N \\
&\cong \Hom {\Dsg(Y)} {\beta_* \pi^* \R \pi_*
\beta^\flat \coker(\bE_M)[-q]} \cN \\
&\cong \Hom {\Dsg(Y)} {\coker(\bE_M)[-q]} \cN
\end{align*}
which is the second isomorphism in \eqref{isoms}. The third isomorphism is due to \cite[5.10]{BW11a}.
\end{proof}

\section{Eisenbud operators}
\label{sec_eis_ops}

We return to the notation and assumptions of \S \ref{assumptions}. For
each matrix factorization $\bE$ and any $i = 1, \ldots, c$, multiplication by $T_i
\in \Gamma(\ps Q, \cO_{\ps Q}(1))$ defines a map
$$
T_i^{\bE}: \bE \to \bE[2]
$$
 given by
\begin{equation}\label{def_mult_T}
\xymatrix{ \cE_1 \ar[d]_{T_i} \ar[r] &
 \cE_0 \ar[d]_{T_i} \ar[r] & \cE_1(1) \ar[d]_{T_i} \\
\cE_1(1) \ar[r] & \cE_0(1) \ar[r] & \cE_1(2).} 
\end{equation}
These maps determine natural transformations
$$
T_i: id_{[MF]} \to (-)[2]_{[MF]} \quad \text{for} \quad i = 1, \dots, c
$$
from the identity functor on $[MF]$ to the functor which sends $\bE$
to $\bE[2]$.

On the other hand, there is a family of natural transformations $t'_i :
id_{\Db(R)} \to (-)[2]_{\Db(R)}$, for $i = 1, \dots c,$
given by the Eisenbud operators \cite{Ei80}; the construction of these
maps is recalled in \S\ref{Eops} below. These descend to natural transformations
$$
t_i : id_{\Dsg(R)} \to (-)[2]_{\Dsg(R)} \quad \text{for} \quad  i = 1, \dots c.
$$ 
One expects that these two natural transformations
coincide via the equivalence $$\Psi: [MF] \map{\cong} \rDsg(Q \onto R)$$
  of Corollary \ref{orlovcor}
and this is precisely the content of the
following theorem.

\begin{thm} \label{thm:Eisenbud}
For each object $\bE$ of $[MF]$ there is a commutative diagram in $\Dsg(R)$
$$
\xymatrix{ \Psi(\bE) \ar[dr]_{t^{\Psi(\bE)}_i}
\ar[r]^{\Psi(T_i^{\bE})} &
  \Psi(\bE[2]) \ar[d]^\cong \\
  &\Psi(\bE)[2] 
}
$$
where the vertical map is the canonical
isomorphism associated to the triangulated functor $\Psi$.
\end{thm}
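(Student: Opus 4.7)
Both $\bE \mapsto \Psi(T_i^{\bE})$ (composed with the canonical isomorphism $\Psi(\bE[2]) \cong \Psi(\bE)[2]$) and $\bE \mapsto t_i^{\Psi(\bE)}$ are natural transformations of functors $[MF] \to \Dsg(R)$, so since $\Psi$ is an equivalence of categories, it suffices to verify the equality on one convenient model of each object. The plan is to pick, for each $M \in \rDsg(Q \onto R)$, an especially concrete matrix factorization $\bE_M$ built from a $Q$-free resolution equipped with a system of higher homotopies, and then to verify $\Psi(T_i^{\bE_M}) = t_i^{M}$ on the nose in that explicit model.

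The crucial observation is that a single piece of data governs both sides. From a bounded $Q$-free resolution $\tilde F_\bullet \to M$ together with a system of higher homotopies $\{\sigma_\alpha\colon \tilde F_\bullet \to \tilde F_{\bullet + 2|\alpha|-1}\}_{\alpha \in \N^c}$ in the sense of \cite{Ei80}, Section \ref{sec:inverse} assembles a matrix factorization $\bE_M$ whose underlying sheaves are twists $\cO(-n)$ of the even and odd pieces of $\tilde F_\bullet$ and whose differentials are $\sum_\alpha \sigma_\alpha\, T^\alpha$. On the other hand, the Eisenbud operator $t_i^M$ is represented on the free $R$-resolution $F_\bullet = \tilde F_\bullet \otimes_Q R$ by $\sigma_{e_i} \otimes R\colon F_\bullet \to F_{\bullet-2}$, where $e_i$ is the $i$-th standard basis vector of $\N^c$; this follows directly from the defining identity $d^2 = \sum_i f_i\, \sigma_{e_i}$ for a system of higher homotopies. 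Thus the coefficient of $T_i$ in the differential of $\bE_M$ is \emph{literally} a lift of $t_i^M$ to $\tilde F_\bullet$.

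It remains to transport $T_i^{\bE_M}$ through $\Psi = \R\pi_*\RY^\flat \circ \coker$ and match the result with $\sigma_{e_i} \otimes R$. I would proceed in three steps: (i) describe $\coker(T_i^{\bE_M})\colon \coker(\bE_M) \to \coker(\bE_M)(1)$ explicitly as a map of coherent sheaves on $Y$; (ii) model $\RY^\flat$ via the Koszul-type complex arising from the regular embedding $\RY\colon \ps R \into Y$ (of codimension $c-1$), so that $\RY^\flat \coker(\bE_M)$ becomes an explicit complex on $\ps R$ built from twists of $\tilde F_\bullet$; and (iii) compute $\R\pi_*$ using a \v{C}ech cover and the standard identification of the cohomology of line bundles on $\ps R$. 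The higher-order homotopies $\sigma_\alpha$ with $|\alpha| \geq 2$ contribute only to intermediate terms that are annihilated on passing to $\Dsg(R)$, while the coefficient $\sigma_{e_i}$ of $T_i$ survives to give $t_i^M$. I expect the main obstacle to be precisely the sign, grading, and twist bookkeeping at step (ii): the compatibility of ``multiplication by the homogeneous coordinate $T_i$'' with the adjunction $\RY_* \dashv \RY^\flat$ and with the proper pushforward $\R\pi_*$ is exactly the identification the authors describe as ``surprisingly delicate'', and I anticipate that this is where the bulk of the work lies, justifying its relegation to a separate section.
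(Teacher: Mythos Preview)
Your reduction via naturality to the matrix factorizations $\bE_M$ of Section~\ref{sec:inverse} is legitimate, but the paper does not take this route, and your ``crucial observation'' contains a genuine error.

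The paper proves the theorem for an \emph{arbitrary} $\bE$, without higher homotopies. It represents $\Psi(\bE)$ in $\Dsg(R)$ by an explicit bounded-above complex of finitely generated projective $R$-modules, obtained by applying $\rH^{c-1}(\ps R,-)[-c+1]$ term-wise to a sufficiently negative truncation of the complex $\RQ^\sharp\bE$ on $\ps R$ (Definitions~\ref{def:flat} and~\ref{def:F}, Proposition~\ref{prop:F}). The key step is that this $R$-complex has a canonical $Q$-lift, obtained by replacing $\rH^{c-1}(\ps R,-)$ with $\rH^{c-1}(\ps Q,-)$. On that lift the composite of two consecutive maps is $\rH^{c-1}(\ps Q, W\cdot{-}) = \sum_i f_i\,\rH^{c-1}(\ps Q, T_i\cdot{-})$, simply because $\bE$ is a matrix factorization of $W$. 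Hence one may take the lifted Eisenbud operators to be $\tilde t_i = \rH^{c-1}(\ps Q,T_i\cdot{-})$, and these reduce modulo $(f_1,\dots,f_c)$ to $\Psi(T_i^{\bE})$. No special form of $\bE$ is used; the relation $\tilde d^2 = W$ is built into every matrix factorization.

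Your claim that $t_i^M$ is ``$\sigma_{e_i}\otimes R$ on the free $R$-resolution $F_\bullet = \tilde F_\bullet\otimes_Q R$'' is wrong on both counts. If $\tilde F = G$ is the finite $Q$-resolution, then $G\otimes_Q R$ is \emph{not} an $R$-resolution of $M$ (its homology is $\Tor^Q_*(M,R)$), and $\sigma_{e_i}$ has degree $+1$, not $-2$. The identity ``$d^2 = \sum_i f_i\sigma_{e_i}$'' is not among the defining relations for a system of higher homotopies; on $G$ one has $d_G^2 = 0$, and the relevant relation is $d_G\sigma_{e_i} + \sigma_{e_i}d_G = f_i$. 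The correct statement is that on the \emph{standard} $R$-resolution $\bar G\otimes D$, the $Q$-lift $G\otimes D$ with differential $\sum_J \sigma^J\otimes\chi^J$ satisfies $\tilde d^2 = \sum_i f_i\,(1\otimes\chi_i)$, so the Eisenbud operator is multiplication by $\chi_i$, not $\sigma_{e_i}$. Under the identification of Proposition~\ref{higher_hom_res} this does correspond to $\Psi(T_i^{\bE_M})$, so your strategy can be repaired --- but only by passing through exactly the machinery the paper develops for arbitrary $\bE$, making the detour through $\bE_M$ superfluous.
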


Our proof of this theorem requires the development of additional
machinery, and is contained in Section
\ref{operator_proof_section}. In this section we explore several
consequences of the theorem. The first is the following:

\begin{cor} \label{isom_ext_global_sections_hom_MF}
Let $M$ and $N$ be objects of $\rDsg( Q \onto R)$. The diagram
\[ 
\xymatrix{
\bH^0(\ps Q, \HomMF(\bE_M[-q], \bE_N)) 
\ar[rr]^{\phantom{XXXX}\cong} \ar[d]_(.45){T_i} && \sExt_R^{q}(M,N)
\ar[d]^(.45){\sExt^q_R(t_i, N)} \\
\bH^{0}(\ps Q, \HomMF(\bE_M[-q-2], \bE_N)) 
\ar[rr]^{\phantom{XXXX} \cong} && \sExt_R^{q+2}(M,N) \\
} 
\]
commutes for $i=1, \dots, c$, 
where the horizontal isomorphisms are given by Corollary \ref{Cor830},
the left vertical map is $\bH^0( \ps Q, T_i)$, where $T_i$ is multiplication by $T_i \in \Gamma(
\ps Q, \cO(1) )$ on the complex of coherent sheaves $\HomMF(\bE_M[-q], \bE_N)$, composed with the natural isomorphism 
$$\HomMF(\bE_M[-q], \bE_N)(1) \cong
\HomMF(\bE_M[-q], \bE_N)[2] \cong \HomMF(\bE_M[-q-2], \bE_N),
$$
and  
$t_i$ is the $i$-th Eisenbud operator.
\end{cor}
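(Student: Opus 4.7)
The plan is to read the diagram as the assertion that the equivalence $\Psi$ is compatible with the natural transformations ``multiplication by $T_i$'' on $[MF]$ and ``$t_i$'' on $\Dsg(R)$, which is exactly the content of Theorem \ref{thm:Eisenbud}. The only real work is to identify the left vertical map, which is phrased as multiplication on the sheaf $\HomMF(\bE_M[-q],\bE_N)$ by the section $T_i$, with precomposition by the morphism of matrix factorizations $T_i^{\bE_M[-q-2]}\colon \bE_M[-q-2]\to\bE_M[-q]$ (equivalently, postcomposition by $T_i^{\bE_N}[q]$).

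First, I would unwind the horizontal isomorphisms. Corollary \ref{Cor830} identifies $\sExt^q_R(M,N)=\Hom_{\Dsg(R)}(M,N[q])$ with $\bH^q(\ps Q,\HomMF(\bE_M,\bE_N))$, and applying the translation functor gives the version written in the statement: $\sExt^q_R(M,N)\cong \bH^0(\ps Q,\HomMF(\bE_M[-q],\bE_N))\cong \Hom_{[MF]}(\bE_M[-q],\bE_N)$. These isomorphisms are induced by $\Psi$, and in particular natural in both arguments. Under this dictionary the right vertical map $\sExt^q_R(t_i,N)$ is, by definition, precomposition with the Eisenbud-operator morphism $t_i^M\colon M[-2]\to M$ (suitably shifted).

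Next, I would analyze the left vertical map. Inspecting the definition of $\HomMF$ (Definition \ref{def_hom_complex}), each term has the form $\uHom(\cE_i,\cF_j)$ with differentials built from $(e_i)^*$ and $(f_j)_*$; the twisting isomorphism $\HomMF(\bE_M[-q],\bE_N)(1)\cong \HomMF(\bE_M[-q-2],\bE_N)$ is an instance of the canonical identification $\uHom(\cG,\cH)(1)\cong \uHom(\cG(-1),\cH)$. Under this identification, multiplication by the global section $T_i$ on the ``source'' argument is exactly precomposition with the chain map $T_i^{\bE_M[-q-2]}\colon \bE_M[-q-2]\to \bE_M[-q]$ defined in \eqref{def_mult_T}, because the component $\uHom(\cE_j(-1),-)$ receives the map $T_i\otimes\id$ on $\cE_j(-1)\to \cE_j$. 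The Cech construction is compatible with this since it is a natural functor in the sheaf argument. Passing to $\bH^0$ and to morphisms in $[MF]$ therefore turns the left vertical map into the homomorphism $\Hom_{[MF]}(\bE_M[-q],\bE_N)\to \Hom_{[MF]}(\bE_M[-q-2],\bE_N)$ given by precomposition with $T_i^{\bE_M[-q-2]}$.

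Finally, I would apply $\Psi$. Since $\Psi$ is a triangulated equivalence, it identifies precomposition by $T_i^{\bE_M[-q-2]}$ in $[MF]$ with precomposition by $\Psi(T_i^{\bE_M[-q-2]})$ in $\Dsg(R)$. Theorem \ref{thm:Eisenbud} (together with naturality of $\Psi$ under the shift functor) says that this latter morphism agrees, up to the canonical isomorphism $\Psi(\bE_M[-q-2])\cong M[-q-2]$, with the Eisenbud operator $t_i^M[-q]$. Hence precomposition on the left side becomes $\sExt^q_R(t_i,N)$, and the square commutes. The only step requiring care, and hence the main obstacle, is the sign- and shift-bookkeeping in the identification of multiplication by $T_i$ on the Cech-$\HomMF$ complex with precomposition by $T_i^{\bE_M}$; once this naturality is in place, the corollary is a formal consequence of Theorem \ref{thm:Eisenbud}.
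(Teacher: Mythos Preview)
Your proposal is correct and follows essentially the same approach as the paper's proof: both identify the left vertical map with precomposition by the morphism $T_i^{\bE_M[-q-2]}\colon \bE_M[-q-2]\to\bE_M[-q]$ in $[MF]$, then invoke Theorem~\ref{thm:Eisenbud} together with the naturality of $\Psi$ to conclude. The paper's version compresses the identification step into ``One checks that the middle vertical map above is equal to the left vertical map in the statement of the corollary,'' while you spell out why multiplication by the section $T_i$ on $\HomMF$ corresponds to precomposition on the source argument; otherwise the arguments are the same.
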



\begin{proof}
Let $T_i: \bE[-q-2] \to \bE[-q]$ be the map defined in
\eqref{def_mult_T}. Consider the diagram
\[\xymatrix{ \Hom {[MF]} {\bE_M[-q]} {\bE_N} \ar[r]^(.43){=} &
  \bH^0(\ps Q, \HomMF(\bE_M[-q], \bE_N)) \ar[r]^(.58)\cong
  \ar[d]_{\bH^q( \ps Q, \HomMF(T_i, \bE_N))} &
  \Hom {\Dsg(R)} {M[-q]}{N} \ar[d]_{\Hom {\Dsg(R)} {t_i}{N}} \\
\Hom {[MF]} {\bE_M[-q-2]} {\bE_N} \ar[r]^(.43){=} & \bH^0(\ps Q, \HomMF(\bE_M[-q-2], \bE_N)) \ar[r]^(.58)\cong &
  \Hom {\Dsg(R)} {M[-q-2]}{N}
}
\]
where the horizontal maps are induced by the functor $\Psi$.
The diagram commutes by Theorem \ref{thm:Eisenbud}.
One checks that the middle vertical map above is equal to the left vertical
map in the statement of the corollary.
\end{proof}

We wish to extend the previous result by dropping the assumption that $N$
is perfect over $Q$. This is best stated using a ``stable
$\operatorname{Ext}$ sheaf" introduced below. With an eye toward
future applications, we work at a level of greater
generality than needed presently.

\subsection{Stable Ext sheaf}
Let $X$ be a scheme that is projective over a Noetherian ring $A$ of
finite Krull dimension,
$\cL = \cO(1)$ the associated very ample line bundle, and $W$ a
regular global section of $\cL$. Let $\YQ: Z \into X$ be the embedding
of the zero
subscheme of $W$. Under these assumptions, by Theorem \ref{our_equiv} there is an
equivalence 
\[\coker:
[MF(X, \cL, W)] \xra{\cong} \rDsg(Z \into X).\]
We
fix an inverse equivalence of $\coker$ and for $\cM$ in $\rDsg(Z
\into X)$ we let $\bE_\cM$ denote the image of $\cM$ under this
inverse. 

\begin{defn} \label{ssExtdef}
Let $X, \cL, W,$ and $Z$ be as above. For $\cM$ in $\rDsg(Z
\into X)$, $\cN$ a bounded complex of coherent sheaves on $Z$, and
an integer $q \in \Z$, define
$$
\suExt^q_{\cO_Z}(\cM, \cN)=  \cH^q 
\uHom_{\cO_Z}(\YQ^* \bE_\cM, \cN),
$$
where $\YQ^* \bE_\cM$ is
the complex of locally free coherent sheaves on $Z$ defined in
\eqref{mf_pullback_defn}. 
\end{defn}

\begin{rem}
\label{stable_ext_hommf}
If $\cN$ is also in $\rDsg(Z \into X)$ then by \cite[5.2.3]{BW11a}
  there is a isomorphism $\YQ_* \uHom_{\cO_Z}(\YQ^* \bE_\cM, \cN) \cong \HomMF( \bE_M, \bE_N
).$
Thus for each $q \in \Z$ there is an isomorphism \[\YQ_* \suExt^q_{\cO_Z}(\cM, \cN) \cong
\cH^q \HomMF( \bE_M, \bE_N
).\]
\end{rem}

\begin{rem}
When $Z = \Spec A$ is affine, the definition above agrees with
    the previous definition of stable Ext given in
    \ref{stable_ext_defn} by Example \ref{mf_compl_resl_ex} and Lemma
    \ref{stable_ext_by_compr}.
\end{rem}

\begin{lem} \label{lem1222}
The rule $(\cM, \cN) \mapsto \uHom_{\cO_Z}(\YQ^* \bE_\cM, \cN)$
defines a functor from 
$\rDsg(Z\into X)^\op \times \Db(Z)$ to $\D(\coh Z)$, the unbounded derived
category of $\coh Z$. In particular, 
$\suExt^q_{\cO_Z}(-, -)$
is a functor
from 
$\rDsg(Z\into X)^\op \times \Db(Z)$  to $\coh Z$.
\end{lem}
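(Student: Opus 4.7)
The plan is to build the bifunctor as a composite of three functors. First, precompose with the chosen inverse equivalence $\bE_{(-)}: \rDsg(Z \into X) \to [MF(X,\cL,W)]$ of $\coker$. Second, apply a functor $\YQ^*_{\D}: [MF(X,\cL,W)] \to \D(\coh Z)$ sending a matrix factorization $\bE$ to the $2$-periodic complex $\YQ^*\bE$ of \eqref{mf_pullback_defn}. Third, post-compose in the first argument with the internal Hom bifunctor $\uHom_{\cO_Z}(-,-): \D(\coh Z)^\op \times \Db(Z) \to \D(\coh Z)$; no derivation is needed in the first slot since $\YQ^*\bE$ is already a complex of locally free coherent sheaves. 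Taking $\cH^q$ of the resulting complex then recovers $\suExt^q$ as a bifunctor landing in $\coh Z$.

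The only nontrivial piece is the construction of $\YQ^*_{\D}$. At the strict level, the rule $\bE \mapsto \YQ^*\bE$ is manifestly a functor from $MF(X,\cL,W)_\exact$ to chain complexes of locally free coherent sheaves on $Z$, and composing with the localization to $\D(\coh Z)$ gives a functor on $MF_\exact$. Naive homotopies pull back to chain homotopies, so this descends to $[MF]_\naive$. Locally contractible matrix factorizations pull back to complexes on $Z$ that are stalkwise contractible, hence acyclic, hence zero in $\D(\coh Z)$. To pass from $[MF]_\naive$ to the Cech-enhanced $[MF] = [MF]_\bH$, I would note that a morphism in the latter is a class in $\bH^0(X, \HomMF(\bE, \bF))$ and produce the corresponding morphism in $\D(\coh Z)$ from the natural chain map of twisted periodic complexes of coherent sheaves on $X$,
$$
\HomMF(\bE, \bF) \longrightarrow \YQ_*\, \uHom_{\cO_Z}(\YQ^*\bE, \YQ^*\bF).
$$
Applying $\bH^0(X,-)$ and using that $\YQ$ is a closed immersion (so $\YQ_*$ is exact and commutes with the Cech computation of hypercohomology), the right-hand side identifies with $\bH^0(Z, \uHom_{\cO_Z}(\YQ^*\bE, \YQ^*\bF)) = \operatorname{Hom}_{\D(\coh Z)}(\YQ^*\bE, \YQ^*\bF)$, the last equality holding because $\YQ^*\bE$ is a bounded-above complex of locally free sheaves (for the computation in degree $q$). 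Compatibility with composition follows from the associativity of \eqref{eq:gammacu-hommfe-f} combined with the naturality of the displayed map.

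The hard part will be the Cech step: checking that this recipe yields an assignment well-defined on cohomology classes and respecting the composition law of $[MF]_\bH$. This is essentially a repackaging of the Cech/hyper-Ext bookkeeping already used in \cite[Theorem 4.2]{BW11a} to set up $[MF]_\bH$ in the first place, but now applied to the $2$-periodic pullback complexes on $Z$ rather than to the matrix factorizations themselves on $X$. Once $\YQ^*_\D$ is in hand, bifunctoriality in the $\Db(Z)$-variable is immediate from that of $\uHom_{\cO_Z}(-,-)$, and the final statement about $\suExt^q$ follows by taking $\cH^q$.
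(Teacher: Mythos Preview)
Your overall strategy matches the paper's---build the bifunctor on $MF_\exact^{\op}\times\cC^b(Z)$, descend to $[MF]^{\op}\times\Db(Z)$, then precompose with the fixed inverse of $\coker$---but your execution introduces two unnecessary complications, one of which is a genuine gap.

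The gap is the factoring through $\D(\coh Z)$ in the first variable. You propose a functor $\YQ^*_\D:[MF]\to\D(\coh Z)$ and then apply $\uHom_{\cO_Z}(-,\cN)$. But $\uHom_{\cO_Z}(-,\cN)$ is \emph{not} a functor on $\D(\coh Z)^{\op}$; only the derived $\R\uHom$ is. Your remark that ``no derivation is needed since $\YQ^*\bE$ is locally free'' does not resolve this: once you have passed to $\D(\coh Z)$, an object is only a quasi-isomorphism class, and an arbitrary quasi-isomorphism between unbounded complexes of locally free sheaves need not induce a quasi-isomorphism after $\uHom(-,\cN)$. What actually makes the construction work is that the morphisms in $[MF]$ arise from \emph{local homotopy equivalences}, not merely quasi-isomorphisms, and this information is lost in $\D(\coh Z)$. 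The paper avoids the issue by never factoring: it defines $(\bE,\cN)\mapsto\uHom_{\cO_Z}(\YQ^*\bE,\cN)$ directly as a functor $MF_\exact^{\op}\times\cC^b(Z)\to\cC(\coh Z)$, observes it preserves quasi-isomorphisms in $\cN$ (since $\YQ^*\bE$ is degreewise locally free and $\cN$ is bounded), and checks that a local homotopy equivalence $\bE\to\bF$ induces a local homotopy equivalence, hence quasi-isomorphism, on $\uHom$.

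The second complication is your detour through $[MF]_\bH$. Recall that $[MF]$ is \emph{defined} as the Verdier quotient of $[MF]_\naive$ by locally contractible objects; the equivalence $[MF]\cong[MF]_\bH$ is a theorem. To define a functor out of $[MF]$ it therefore suffices to define it on $[MF]_\naive$ and check it kills locally contractible objects---which you already do in your second paragraph. The entire Cech discussion is unnecessary. The paper uses exactly the Verdier-quotient route and is done in a few lines.
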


\begin{proof} Recall $\D(\coh Z)$ may be defined as the category of
  complexes of
  coherent sheaves, $\cC(\coh Z)$, with quasi-isomorphisms inverted. Then
  $\Db(Z)$ is the full subcategory of $\D(\coh Z)$ with objects those
  complexes with bounded cohomology. For any
  fixed matrix factorization $\bE$, we have a functor from $\cC^b(Z)$ to
  $\cC(\coh Z)$ given by 
$$
\cN \mapsto \uHom_{\cO_Z}(\gamma^* \bE_M, \cN).
$$
Since each
component of $\gamma^* \bE_M$ is locally free and $\cN$ is bounded, 
this functor preserves quasi-isomorphisms and
hence induces a functor from $\Db(Z)$ to $\D(Z)$. Since the
construction is natural for strict morphisms of matrix factorizations,
we obtain a functor from $MF(X, \cL, W)^{\op}_\exact \times \Db(Z)$ to
$\D(\coh Z)$.

If $\bE \to \bF$ is locally a homotopy equivalence, then
so are $\gamma^* \bE \to \gamma^* \bF$ and 
$$
\uHom_{\cO_Z}(\gamma^* \bE_M, \cN) \to 
\uHom_{\cO_Z}(\gamma^* \bE_M, \cN),
$$
and hence the latter is a quasi-isomorphism.
It follows that we get an induced functor \[[MF(X, \cL, W)]^{\op}
\times \Db(Z) \to \D(\coh Z).\] Precomposing
with the chosen inverse of $\coker$ gives the result.
\end{proof}

\begin{prop}  \label{prop:1450}
Let $X, \cL, W,$ and $Z$ be as above. 
Let $\cM$ be an object of $\rDsg(Z
\into X)$ and $\cN$ be an object of $\Db(Z)$.
For all $y \in Z$ and $q \in \Z$ there are isomorphisms
$$
\begin{aligned}
\suExt^q_{\cO_Z}(\cM, \cN) & \cong \suExt^{q+2}_{\cO_Z}(\cM,
\cN)(-1) \, \text{ and}  \\
\suExt^q_{\cO_Z}(\cM, \cN)_y & \cong \sExt^q_{\cO_{Z,y}}(\cM_y,
\cN_y), \\
\end{aligned}
$$
that are natural in $\cM$
and $\cN$. For all $q$, there is  a map
$$
\rHom_{\Dsg(Z)}( \cM[-q], \cN) \cong \bH^q(Z,  \uHom_{\cO_Z} (\gamma^* \bE_\cM, \cN)) \to 
\Gamma(Z,\suExt^q_{\cO_Z}(\cM, \cN)),
$$
that is natural in $\cM$ and $\cN$. This map is an isomorphism when $q \gg
0$.
\end{prop}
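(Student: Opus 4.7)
The plan is to address the three assertions of the proposition in turn. For the twisted periodicity, I would use that by the very construction \eqref{mf_pullback_defn} the complex $\gamma^*\bE_\cM$ satisfies $\gamma^* \bE_\cM[2] \cong \gamma^* \bE_\cM(1)$: shifting the doubly infinite complex by two positions amounts to twisting by $\cL = \cO(1)$. Applying $\uHom_{\cO_Z}(-, \cN)$ converts $[2]$ in the first argument into $[-2]$ and the twist by $\cL$ into a twist by $\cL^{-1}$, yielding an isomorphism of complexes of coherent sheaves $\uHom_{\cO_Z}(\gamma^* \bE_\cM, \cN)[-2] \cong \uHom_{\cO_Z}(\gamma^* \bE_\cM, \cN)(-1)$. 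Taking $\cH^q$ of both sides and reindexing gives the asserted $\suExt^q_{\cO_Z}(\cM, \cN) \cong \suExt^{q+2}_{\cO_Z}(\cM, \cN)(-1)$.

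For the stalk identification at $y \in Z$, each term of $\gamma^* \bE_\cM$ is locally free coherent and $\cN$ is bounded with coherent cohomology, so both $\uHom$ and $\cH^q$ commute with localization, giving $\suExt^q_{\cO_Z}(\cM, \cN)_y \cong \cH^q \rHom_{\cO_{Z,y}}(\gamma^* \bE_{\cM, y}, \cN_y)$. At the stalk, $\gamma^* \bE_{\cM, y}$ is the two-sided two-periodic complex of finitely generated free $\cO_{Z,y}$-modules built from the localized matrix factorization of $W_y$; by Appendix \ref{compl_resl_sec} this is a complete resolution of $\coker(\bE_\cM)_y$ over $\cO_{Z,y}$, and Lemma \ref{stable_ext_by_compr} then identifies the right-hand side with $\sExt^q_{\cO_{Z,y}}(\coker(\bE_\cM)_y, \cN_y)$. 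Since $\coker(\bE_\cM) \cong \cM$ in $\rDsg(Z \into X)$ (and hence in $\Dsg(\cO_{Z,y})$ on stalks) and stable Ext is invariant under isomorphism in the singularity category, this equals $\sExt^q_{\cO_{Z,y}}(\cM_y, \cN_y)$.

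For the third assertion, the first isomorphism $\rHom_{\Dsg(Z)}(\cM[-q], \cN) \cong \bH^q(Z, \uHom(\gamma^* \bE_\cM, \cN))$ is a direct generalization of \cite[5.10]{BW11a}, obtained by the same argument as the third isomorphism in Proposition \ref{orlovcor2} with $\cN$ no longer assumed to be pulled back from below. The map to $\Gamma(Z, \suExt^q)$ is then the edge map of the local-to-global spectral sequence
\[ E_2^{p,r} = H^p(Z, \suExt^r_{\cO_Z}(\cM, \cN)) \Longrightarrow \bH^{p+r}(Z, \uHom_{\cO_Z}(\gamma^* \bE_\cM, \cN)). \]

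The main obstacle is proving that this edge map is an isomorphism for $q \gg 0$, which combines the periodicity of part one with Serre vanishing. By Lemma \ref{lem1222}, $\suExt^0$ and $\suExt^1$ are coherent on $Z$, and $Z$ is projective over a Noetherian ring as a closed subscheme of $X$, so Serre vanishing produces an integer $k_0$ with $H^p(Z, \suExt^{r_0}(k)) = 0$ for all $p > 0$, all $k \geq k_0$, and $r_0 \in \{0,1\}$. By part one, $\suExt^{r_0 + 2k} \cong \suExt^{r_0}(k)$, so $H^p(Z, \suExt^r) = 0$ for all $p > 0$ and all sufficiently large $r$. Since $Z$ has finite Krull dimension, only finitely many values of $p$ contribute in each total degree, so for $q \gg 0$ every term $E_2^{p, q-p}$ with $p > 0$ vanishes, forcing the edge map to be an isomorphism. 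The subtle point requiring care is the convergence of the spectral sequence for the unbounded complex $\uHom(\gamma^* \bE_\cM, \cN)$; for a fixed $q$ one can verify convergence by replacing the complex with a bounded ``window'' subcomplex that agrees with it on cohomology near degree $q$, since $\cN$ is bounded so that the degree-$q$ part of $\uHom$ only sees a bounded range of terms from $\gamma^* \bE_\cM$.
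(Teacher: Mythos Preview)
Your proposal is correct and follows essentially the same route as the paper: twisted periodicity of $\uHom_{\cO_Z}(\gamma^*\bE_\cM,\cN)$ for the first isomorphism, the complete-resolution description of stable Ext via Example~\ref{mf_compl_resl_ex} and Lemma~\ref{stable_ext_by_compr} for the stalk computation, and the edge map of the hypercohomology spectral sequence together with Serre vanishing for the final assertion. Your extra care with convergence of the spectral sequence (via a bounded window) is more explicit than the paper, which simply invokes the edge map without comment; this is a reasonable point to flag, though finiteness of the cohomological dimension of $Z$ already handles it.
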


\begin{proof}
The first isomorphism is due to the fact that $\uHom_{\cO_Z}(\YQ^*
\bE_\cM, \cN)$ is a twisted periodic complex; see Definition \ref{twisted_periodic_def}.

For the second isomorphism, we are in the context of Example
\ref{mf_compl_resl_ex} which shows that there is a complete resolution
$T \to P \to \cM_y$ with $T = \gamma^*(\bE_\cM)_y$. It
follows from Lemma \ref{stable_ext_by_compr} that $T$ can be used to compute
$\sExt_{\cO_{Z,y}}(\cM_y, \cN_y)$ and we have:
\begin{align*}\sExt_{\cO_{Z,y}}^q(\cM_y,
\cN_y) &\cong \rH^q \left ( \rHom_{\cO_{Z,y}} ( \gamma^*(\bE_\cM)_y, \cN_y )
\right )\\ &\cong \cH^q \left (\uHom_{\cO_Z}(
\gamma^*(\bE_\cM), \cN)\right )_y \\
&= \suExt^q_{\cO_Z}(\cM, \cN)_y.
\end{align*}

The last map in the statement of the proposition is an edge map in the spectral sequence
$$
E^{p,q}_2 = H^p(Z,  \suExt^q_{\cO_Z}(\cM, \cN)) 
\Longrightarrow 
\bH^{p+q}(Z,  \uHom_{\cO_Z} (\gamma^* \bE_M, \cN)).
$$
By the first isomorphism and the fact that $\cL$ is very ample, we obtain
$H^p(Z,  \suExt^q_{\cO_Z}(\cM, \cN)) = 0$ if $p >0$ 
and $q \gg 0$ by Serre's Vanishing Theorem, and hence the map is an isomorphism for $q \gg 0$.
\end{proof}

The following is the sought-after generalization of
Corollary \ref{isom_ext_global_sections_hom_MF}, which drops the requirement that
$N$ is perfect over $Q$. We return to the context and notation of \S \ref{assumptions}.

\begin{cor}
\label{isom_ext_global_sections_hom}
 Let $M$ be an object of $\rDsg( Q \onto R)$ and let $N$ be an object
 of $\Dsg(R)$. Let $\cM$ and $\cN$  be
the images of $\beta_* \pi^* M$ and $\beta_* \pi^* N$, respectively, in $\Dsg(Y)$.
For all $q \in \Z$ there is a natural map
\begin{equation}
\label{map_1619}
\sExt_R^{q}(M,N) \to 
\Gamma( Y , \suExt_{\cO_Y}^q( \cM, \cN )) 
\end{equation}
that makes
the following diagram commute:
\[ 
\xymatrix{
\sExt_R^{q}(M,N) \ar[d]^(.45){\sExt_R^q(t_i,N)} \ar[rr]
&&
\Gamma(Y,  \suExt_{\cO_Y}^{q}( \cM, \cN )) 
\ar[d]_(.45){\Gamma(Y,T_i)} \\
\sExt_R^{q+2}(M,N) \ar[r] &
\Gamma(Y,  \suExt_{\cO_Y}^{q+2}( \cM, \cN ))  
\ar[r]^(.45)\cong & \Gamma(Y,  \suExt_{\cO_Y}^{q}( \cM, \cN )(1)).
\\
} 
\]
Here $T_i$ is multiplication by $T_i
\in \Gamma(Y, \cO_Y(1))$ on the coherent sheaf $\suExt_{\cO_Y}^{q}(
\cM, \cN )$, the isomorphism on the lower-right is from Proposition \ref{prop:1450},
and  
$t_i$ is the $i$-th Eisenbud
operator. Moreover, for $q \gg 0$, the horizontal maps in this diagram
are isomorphisms.
\end{cor}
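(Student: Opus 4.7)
The plan is to define the map \eqref{map_1619} as a composition of two previously-constructed maps, and then deduce both the commutativity of the diagram and the claim that the horizontal maps are isomorphisms for $q \gg 0$ by combining naturality with Theorem \ref{thm:Eisenbud}. Concretely, I would define the map as the composition
\[
\sExt_R^q(M,N) \xrightarrow{\cong} \bH^q(Y, \uHom_{\cO_Y}(\gamma^* \bE_M, \cN)) \longrightarrow \Gamma(Y, \suExt_{\cO_Y}^q(\cM, \cN)),
\]
where the first arrow is the isomorphism from Proposition \ref{orlovcor2} and the second is the edge map of the hyperExt spectral sequence from Proposition \ref{prop:1450} (applied with $X = \ps Q$, $\cL = \cO(1)$ and $Z = Y$). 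Both maps are natural in $M$ and $N$, so the composition is too.

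For the second claim (isomorphism for $q \gg 0$), the first map is always an isomorphism, and the second is an isomorphism for $q \gg 0$ by Proposition \ref{prop:1450}. Hence the horizontal maps in the diagram are isomorphisms for $q \gg 0$.

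For commutativity of the diagram, the key identification comes from Theorem \ref{thm:Eisenbud}: under the equivalence $\Psi$, the Eisenbud operator $t_i$ on $M = \Psi(\bE_M)$ agrees with $\Psi(T_i^{\bE_M})$, where $T_i^{\bE_M}: \bE_M \to \bE_M[2]$ is the map \eqref{def_mult_T} given by multiplication by $T_i \in \Gamma(\ps Q, \cO(1))$. Therefore, under the isomorphism of Proposition \ref{orlovcor2}, the left vertical map $\sExt_R^q(t_i, N)$ is identified with the map on $\bH^q(Y, \uHom_{\cO_Y}(\gamma^* \bE_M, \cN))$ induced by multiplication by $T_i$ on $\bE_M$. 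After applying $\gamma^*$ this becomes multiplication by $\gamma^*(T_i) \in \Gamma(Y, \cO_Y(1))$ on the complex $\uHom_{\cO_Y}(\gamma^* \bE_M, \cN)$. Because the edge map of the spectral sequence is natural with respect to morphisms of complexes of coherent sheaves, the resulting action on $\Gamma(Y, \suExt_{\cO_Y}^q(\cM, \cN))$ is precisely multiplication by $T_i$ on the sheaf $\suExt_{\cO_Y}^q(\cM, \cN)$, which is the right vertical map. This gives commutativity up to the canonical twisted-periodicity isomorphism from Proposition \ref{prop:1450}, which is built into the way the diagram is drawn.

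The main obstacle I anticipate is bookkeeping around the three different incarnations of ``multiplication by $T_i$'' --- as a chain map of matrix factorizations on $\ps Q$ as in \eqref{def_mult_T}, as multiplication by $\gamma^*(T_i)$ on the pulled-back complex $\gamma^* \bE_M$ over $Y$, and as multiplication on the sheaf-Ext $\suExt^q_{\cO_Y}(\cM, \cN)$ viewed as a graded $\cO_Y$-module --- and verifying that they all agree under the chosen identifications. Once one unwinds the definitions this is formal from the functoriality of $\gamma^*$ and $\uHom_{\cO_Y}$ together with the naturality of the Grothendieck spectral sequence whose edge map defines the second arrow above; no deeper input is needed beyond Theorem \ref{thm:Eisenbud}.
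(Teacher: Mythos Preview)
Your approach is essentially the same as the paper's: define \eqref{map_1619} as the composite of the isomorphism of Proposition~\ref{orlovcor2} with the spectral-sequence edge map of Proposition~\ref{prop:1450}, invoke Theorem~\ref{thm:Eisenbud} to identify $t_i$ with $T_i$ on the matrix-factorization side, and then use naturality of the edge map to push this through to multiplication by $T_i$ on the sheaf $\suExt^q_{\cO_Y}$. The one detail you glide over, which the paper makes explicit, is that the edge map of Proposition~\ref{prop:1450} literally lands in $\Gamma(Y,\suExt^q_{\cO_Y}(\coker\bE_M,\cN))$ rather than $\Gamma(Y,\suExt^q_{\cO_Y}(\cM,\cN))$, so you must also record the natural isomorphism $\cM \cong \coker(\bE_M)$ in $\rDsg(Y\subset\ps Q)$ (coming from $\beta_*\pi^*$ being inverse to $\R\pi_*\beta^\flat$) to identify the two targets.
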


\begin{proof}
By Proposition \ref{orlovcor2} we have the following string of
isomorphisms:
\[ \Hom {\Dsg(R)} {M[-q]} N \xra{\cong} \Hom {\Dsg(R)}
  {\Psi(\bE_M)[-q]} {N} \xra{\cong}  \bH^q(Y, \uHom_{\cO_Y}(\YQ^*
  \bE_M, \cN) ).\]
By Theorem \ref{thm:Eisenbud} and the naturality of the maps in
Proposition \ref{orlovcor2} the following diagram is commutative:
\[
\tiny{\xymatrix{
\Hom {\Dsg(R)} {M[-q]} N \ar[d]^{\Hom {\Dsg(R)} {t_i} N}
\ar[r]_(.45){\cong} & \Hom {\Dsg(R)}
  {\Psi(\bE_M)[-q]} {N} \ar[d]^{\Hom {\Dsg(R)} {\Psi(T_i^\bE)} N}
  \ar[r]^\cong & \bH^q(Y, \uHom_{\cO_Y}(\YQ^*
  \bE_M, \cN) ) \ar[d]^{\bH^q(Y, \uHom_{\cO_Y}(\YQ^* T_i^\bE, \cN))} \\
\Hom {\Dsg(R)} {M[-q-2]} N \ar[r]_(.45){\cong} & \Hom {\Dsg(R)}
  {\Psi(\bE_M[-2])[-q]} {N} \ar[r]_{\cong} & \bH^q(Y, \uHom_{\cO_Y}(\YQ^*
  \bE_M[-2], \cN) ).
}}
\]
Now by Proposition
\ref{prop:1450} we have a natural map $$\bH^q \left ( Y,
  \uHom_{\cO_Y}(\YQ^* \bE_M, \cN) \right ) \to
\Gamma\left ( Y,  \suExt^q_{\cO_Y}\left (\coker(\bE_M), \cN\right) \right ),$$ noting that
$\bE_{\coker (\bE_M)} \cong \bE_M$. This gives a commutative diagram
\[
\xymatrix{ 
  \bH^q \left ( Y,
  \uHom_{\cO_Y}(\YQ^* \bE_M, \cN) \right ) \ar[r]^\cong
  \ar[d]^{\bH^q(Y, \uHom_{\cO_Y}(\YQ^* T_i^\bE, \cN))}& 
\Gamma\left ( Y,  \suExt^q_{\cO_Y}\left (\coker(\bE_M), \cN\right) \right ) \ar[d]^{\Gamma(Y,
  \suExt^q(\coker(T_i^\bE), \cN))} \\
\bH^{q} \left ( Y,
\uHom_{\cO_Y}(\YQ^* \bE_M[-2], \cN) \right ) \ar[r]_\cong &
\Gamma\left ( Y,  \suExt^q_{\cO_Y}\left (\coker(\bE_M)[-2], \cN\right) \right ).}
\]
One checks that the following diagram commutes
\[ \xymatrix{ \Gamma\left ( Y,  \suExt^q_{\cO_Y}\left (\coker(\bE_M), \cN\right) \right )\ar[r]_= \ar[d]^{\Gamma(Y,
  \suExt^q(\coker(T_i^\bE), \cN))} & \Gamma\left ( Y,  \suExt^q_{\cO_Y}\left (\coker(\bE_M), \cN\right) \right ) \ar[d]^{\Gamma(Y, T_i)} \\
\Gamma\left ( Y,  \suExt^q_{\cO_Y}\left (\coker(\bE_M)[-2], \cN\right) \right ) \ar[r]_\cong & 
\Gamma\left ( Y,  \suExt^q_{\cO_Y}\left (\coker(\bE_M), \cN\right)(1) \right )
}\]
where the isomorphism $\Gamma\left ( Y,  \suExt^q_{\cO_Y}\left (\coker(\bE_M)[-2], \cN\right) \right )\xra{\cong} 
\Gamma\left ( Y,  \suExt^q_{\cO_Y}\left (\coker(\bE_M), \cN\right)(1)
\right )$ is from Lemma \ref{prop:1450} and $T_i$ is multiplication by $T_i \in
\Gamma( Y, \cO(1) )$ on the coherent sheaf $\suExt^q_{\cO_Y}\left (\coker(\bE_M), \cN\right)$.

Finally, we claim that there is a natural isomorphism $\cM \cong \coker(
\bE_M )$ in $\rDsg(Y \subseteq \ps Q)$. Indeed, since $\Psi( \bE_M ) := \R \pi_*
\beta^\flat \coker(\bE_M) \cong M$ in $\rDsg(Q \onto R)$, we have that
\[
\R \beta_* \L \pi^* \R \pi_*
\beta^\flat \coker(\bE_M) \cong \R \beta_* \L \pi^* M = \cM \in \rDsg(Y
\into \ps Q).
\]
Since $\R \beta_* \L \pi^*$ induces an equivalence $\rDsg(Q \onto R) \to \rDsg(Y
\into \ps Q)$ that is inverse to $\R \pi_*
\beta^\flat$, the natural map $\R \beta_* \L \pi^* \R \pi_*
\beta^\flat \coker(\bE_M) \to \coker(\bE_M)$ is an isomorphism in
$\rDsg(Y \into \ps Q)$. Thus there exists a functorial isomorphism 
\[\cM := \R \beta_* \L \pi^* M \xra{\cong} \coker( \bE_M). \]
By the claim, there is a natural isomorphism $\suExt^q_{\cO_Y}(\coker(\bE_M),
\cN) \cong \suExt^q_{\cO_Y}(\cM, \cN)$. 

Piecing together the commutative squares and the isomorphism $\cM
\cong \coker \bE_M$ gives the commutative diagram in the statement of
the proposition.

That the horizontal morphisms are isomorphisms for $q \gg 0$ follows
from \ref{prop:1450}.
\end{proof}

We return again to the more general context that $X$ is a
Noetherian scheme that is projective over an affine scheme, $\cL$ is
the corresponding very ample line bundle, $W$ is a regular global section of $\cL$, and
$\gamma: Z \into X$ is the zero subscheme of $W$.
Recall that $\uExt_{\cO_Z}^q(\cM, \cN)$ denotes the quasi-coherent
sheaf satisfying 
$$
\uExt_{\cO_Z}^q(\cM, \cN)_z 
\cong
\rExt_{\cO_{Z,z}}^q(\cM_z, \cN_z)
$$ 
for all $z \in Z$.

\begin{prop}
  \label{prop:nat_trans_ext_stable}
For each $q \in \Z$, there is a natural transformation
\[ \eta_q: 
\uExt^q_{\cO_Z}( - , - ) \to \suExt^q_{\cO_Z}(
\can(-) , - ), \]
of functors from $\rrperf(Z \into X)^\op \times  \Db(Z)$ to
$\coh Z$,
where 
\[\can: \rrperf(Z \into X) \to \rrperf( Z \into X ) /
\Perf Z = \rDsg(Z \into X)\] is the localization functor. For all $y \in Z$, the induced map on stalks
$$
\rExt^q_{\cO_{Z,y}}(\cM_y, \cN_y) \cong \uExt^q_{\cO_Z}( \cM , \cN )_y \to \suExt^q_{\cO_Z}(
\cM , \cN)_y \cong
\sExt^q_{\cO_{Z,y}}(\cM_y, \cN_y) 
$$
is the map \eqref{eq:normal_to_stable}.
Moreover, for fixed $\cM \in
\rrperf(Z \into X)$ and $\cN \in \Db(Z)$, there exists 
$q_0 \geq 0$ such that $\eta_q$ is an isomorphism for all $q \geq q_0$.
\end{prop}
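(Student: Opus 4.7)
Plan: The idea is to extract from the pulled-back matrix factorization a global locally free resolution whose cohomology computes $\uExt$, and compare it to the twisted-periodic complex computing $\suExt$. Fix $\cM \in \rrperf(Z \into X)$ and let $\bE_\cM$ be the matrix factorization associated to $\can(\cM)$ under the chosen inverse of $\coker$. Decompose the doubly-infinite complex $\gamma^{*}\bE_\cM$ on $Z$ as
\[ 0 \to Q \to \gamma^{*}\bE_\cM \to P \to 0, \]
where $P$ is the brutal truncation of $\gamma^{*}\bE_\cM$ in non-positive cohomological degrees (with $\gamma^{*}\cE_0$ in degree $0$) and $Q$ is the subcomplex in strictly positive degrees. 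This sequence is termwise split, with all entries locally free, so applying $\uHom_{\cO_Z}(-,\cN)$ preserves exactness and yields a long exact cohomology sequence in which the middle term is $\suExt^{q}_{\cO_Z}(\cM,\cN)$ by Definition \ref{ssExtdef}.

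By Example \ref{mf_compl_resl_ex}, the stalk $(\gamma^{*}\bE_\cM)_z$ is a complete resolution of $\cM_z$ at every $z \in Z$, so $P_z$ is a projective resolution of $\cM_z$ (in the appropriate stable sense). This identifies $\cH^{q}(\uHom_{\cO_Z}(P,\cN))$ with $\uExt^{q}_{\cO_Z}(\cM,\cN)$ stalkwise (and hence globally as coherent sheaves), and the resulting map from the long exact sequence defines $\eta_q$. On stalks, $\eta_q$ agrees with the natural map $\rExt^q(\cM_z,\cN_z) \to \sExt^q(\cM_z,\cN_z)$ of \eqref{eq:normal_to_stable}, because via Lemma \ref{stable_ext_by_compr} and Appendix \ref{compl_resl_sec} this classical comparison map is induced precisely by the surjection from a complete resolution onto its projective-resolution half. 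Naturality in both variables then follows from the functoriality of $\cM \mapsto \bE_\cM$ through the equivalence $\coker$, together with the fact that homotopic chain-level lifts induce the same map on cohomology. For morphisms in $\rrperf$ that vanish in $\rDsg$, compatibility on the target side is automatic and on the source side follows from the vanishing of $\uExt^{q}(\cP,\cN)$ for perfect $\cP$ in high degrees.

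The eventual-isomorphism statement will be immediate from the long exact sequence. Since $Q$ lives in cohomological degrees $\geq 1$ and $\cN$ is bounded above, say by an integer $b$, the complex $\uHom_{\cO_Z}(Q,\cN)$ is concentrated in cohomological degrees $\leq b-1$; hence for all $q \geq b+1$ both $\cH^{q-1}(\uHom(Q,\cN))$ and $\cH^{q}(\uHom(Q,\cN))$ vanish and $\eta_q$ is forced to be an isomorphism, with uniform bound $q_{0} := b+1$ depending only on $\cN$. The main technical obstacle I anticipate is the clean identification $\cH^{q}(\uHom(P,\cN)) \cong \uExt^{q}_{\cO_Z}(\cM,\cN)$ at the sheaf level: this is straightforward when $\cM$ is literally a cokernel of $\bE_\cM$, but in general $\cM$ and $\coker(\bE_\cM)$ are merely isomorphic in $\rDsg(Z \into X)$. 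I expect this to be handled by a careful Verdier-roof argument in $\Db(Z)$ comparing $\cM$ to $\coker(\bE_\cM)$ up to perfect direct summands, whose contribution is annihilated by the vanishing observed for $\eta_q$ in high degrees.
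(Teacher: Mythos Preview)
Your truncation argument correctly produces, for every $q$, a map
\[
\cH^q\!\left(\uHom_{\cO_Z}(P,\cN)\right) \longrightarrow \suExt^q_{\cO_Z}(\cM,\cN),
\]
and your vanishing analysis of $\uHom_{\cO_Z}(Q,\cN)$ shows this is an isomorphism for $q$ large. The difficulty is the source. The complex $P$ is a locally free resolution of $\coker\bE_\cM$, not of $\cM$: at a point $z$, Example~\ref{mf_compl_resl_ex} furnishes a complete resolution $T \to P' \to \cM_z$ with $T=(\gamma^*\bE_\cM)_z$, but the projective half $P'$ there is \emph{not} the brutal truncation $P_z=T^{\leq 0}$; the latter resolves $(\coker\bE_\cM)_z$. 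Hence $\cH^q(\uHom_{\cO_Z}(P,\cN)) \cong \uExt^q_{\cO_Z}(\coker\bE_\cM,\cN)$, and your roof $\cM \leftarrow \cG \to \coker\bE_\cM$ identifies this with $\uExt^q_{\cO_Z}(\cM,\cN)$ only once $q$ exceeds the projective amplitude of the perfect cones involved. For small $q$ there is in general no such identification, so your construction has not produced $\eta_q$ for all $q$; it has produced it for $q\ge q_0$ and established the eventual-isomorphism claim, which is exactly the paper's first step.

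What is missing is the extension to small $q$, and this requires an additional idea. The paper supplies it by constructing a \emph{global} Eisenbud operator $\chi^q_{(X,\cL,W)}\colon \uExt^q_{\cO_Z}(\cM,\cN)(1)\to\uExt^{q+2}_{\cO_Z}(\cM,\cN)$ (Lemma~\ref{Lem1116}), built from a not-necessarily-complex lift of a resolution of $\cM$ to $X$, whose stalks recover the classical hypersurface operator. One then \emph{defines}
\[
\eta_q := (\text{periodicity})^{-1}\circ \eta_{q+2N}\circ \chi^{\circ N}
\]
for $N\gg 0$, and checks well-definedness, naturality, and the stalkwise description by reducing to the affine case. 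Your proposal does not contain a substitute for this mechanism; the roof argument alone cannot bridge the gap in low degrees.
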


\begin{rem}
 Corollaries \ref{isom_ext_global_sections_hom} and
 \ref{prop:nat_trans_ext_stable} establish \eqref{intro_eqation} and \eqref{global_sects_isom} from
  the Introduction.
\end{rem}

In the case of an affine scheme, say $Z = \Spec A$ and $\cM = \widetilde{M}$ for an
$A$-module $M$, one may compute
$\sExt^*_A(M, - )$ using a complete resolution $T \to P \to M$ by
Lemma \ref{stable_ext_by_compr}. In this case the map $T \to P$ induces the natural transformation $\Ext * A M -
\to \sExt^*_A(M, -)$.

In the more general case, $\YQ^* \bE_\cM$ represents, in some sense,
the $T$ in a complete resolution of $\cM$ in $\rDsg( Z \into X)$. However there is not necessarily a locally
free resolution $\cP$ of $\cM$ and a comparison map $\YQ^* \bE_\cM \to
\cP$. Thus to define the natural transformation in Proposition
\ref{prop:nat_trans_ext_stable} requires more work than in the
affine case. Our approach is to construct a global version of the Eisenbud
operators.

We begin by proving that given a complex $\cM = (\cdots \to \cM_{j+1}
\to \cM_j \to \cdots)$ of coherent sheaves 
on $Z$ that is bounded to the right (i.e., $\cM_j = 0$ for $j \ll 0$), 
there  exists a sequence of maps of locally free coherent sheaves on $X$
$$
\cE = \, \cdots \to \cE_{j+1} \xra{d_{j+1}} \cE_j \xra{d_{j}}
\cE_{j-1} \to \cdots
$$
and maps $g_j: \cE_j \to \gamma_*\cM_j$ of coherent
sheaves on $X$, for all $j$, such that the following conditions hold:
(1) $\cE_j = 0$ for $j \ll 0$, (2) the square 
\begin{equation} \label{L216}
\xymatrix{
\cE_{j} \ar[r] \ar[d] & \cE_{j-1} \ar[d] \\
\gamma_* \cM_{j} \ar[r]  & \gamma_* \cM_{j-1}  \\
}
\end{equation}
commutes for all $j$, (3) the sequence
$\gamma^*(\cE)$ is a complex, and (4) the map 
$\gamma^*(\cE) \to \cM$ of complexes given by the adjoints
of the $g_j$'s is a
quasi-isomorphism. Note that $\cE$ itself need not be  complex.

It suffices to construct such sheaves and maps of sheaves so that (1) --
(3) and the following condition holds: (4') the canonical map from $\gamma^*\cE_{j}$ to the
pullback of 
\begin{equation} \label{L216b}
\xymatrix{
& \gamma^* \cE_{j-1} \ar[d] \\
\cM_j \ar[r] &\cM_{j-1} \\
}
\end{equation}
is surjective for all $j$. Indeed, an easy diagram chase shows that,
so long as $\gamma^* \cE$ is a complex, 
condition (4') implies that $\gamma^* \cE \to \cM$ is a
quasi-isomorphism.

We give a recursive construction of $\cE_j$, $d_j$ and $g_j$.  
Since $\cM_j = 0$ for $j \ll 0$, there is no problem starting the
construction. Suppose $\cE_j$, $d_j$,  and $g_j$ have been constructed for all
$j \leq n$ so that \eqref{L216} commutes, the composition of $\gamma^*\cE_j
\to \gamma^* \cE_{j-1} \to \gamma^* \cE_{j-2}$ is the zero map, and
the map from $\gamma^* \cE_j$ to the pullback of \eqref{L216b} is
surjective, for all $j \leq n$. 
Observe also that these conditions ensure that
$\gamma^* \cE_j \onto \cM_j$ is surjective for all $j \leq n$, since $\cM_j =
0$ for $j \ll 0$.

Now define  $\cF_{n+1}$ to be the coherent
sheaf fitting into the pullback square
$$
\xymatrix{
\cF_{n+1} \ar[r] \ar@{->>}[d] & \gamma^* \cE_{n} \ar@{->>}[d] \\
\cM_{n+1} \ar[r] &\cM_{n}. \\
}
$$
Then we can construct a locally free coherent
sheaf $\cE_{n+1}$ and maps so that 
$$
\xymatrix{
\cE_{n+1} \ar[r]^{d_{n+1}}  \ar@{->>}[d] & \cE_{n} \ar@{->>}[d]^{\text{canonical}} \\
\gamma_* \cF_{n+1} \ar[r] & \gamma_* \gamma^* \cE_{n}  \\
}
$$
commutes and the left vertical map is surjective, by choosing $\cE_{n+1}$ to be a locally free coherent sheaf mapping
onto the  pullback of the other three components of this square. Define $g_{n+1}$ as the composition
$\cE_{n+1} \onto \gamma_* \cF_{n+1} \to \gamma_* \cM_{n+1}$. One may
readily verify that the above conditions are now satisfied for all $j
\leq n+1$, completing the recursive construction.

Tensoring the exact sequence $0 \to \cL^{-1} \to \cO_X \to \gamma_* \cO_Z
\to 0$ with any locally free coherent sheaf $\cG$
on $X$ gives an exact sequence
$$
0 \to \cG \otimes \cL^{-1} \to \cG \to \gamma_* \gamma^* \cG \to 0.
$$
It follows that for every $j$, the composite map
$$
\cE_{j} \to \cE_{j-2}
$$
factors through a uniquely determined map
$$
\tilde{t}_j:  \cE_j \to \cE_{j-2} \otimes \cL^{-1}.
$$
Let $t_j = \gamma^*(\tilde{t}_j)$. By localizing and using the results of 
\cite[Section 1]{Ei80}, we see that the collection $t_j$ determines
a map of chain complexes
$$
t: \gamma^*(\cE) \to \gamma^*(\cE\otimes \cL^{-1})[-2].
$$

\begin{lem} \label{Lem1116}
For a complex $\cM$ on $Z$ with bounded coherent cohomology, choose  $\cE$ and define $t:
\gamma^*(\cE) \to \gamma^*(\cE \otimes \cL^{-1})[-2]$ as  above. Then
$t$ induces a natural transformation of functors from $\coh Z$ to $\coh Z$
$$
\chi^q_{(X, \cL, W)}: \uExt^q_{\cO_Z}(\cM, - )(1) \to
\uExt^{q+2}_{\cO_Z}(\cM, - ).
$$
Moreover, this map is natural in $\cM$ 
and it is the unique map such that, for all $y \in Z$, the induced map on stalks at $y$,
$$
\rExt^q_{\cO_{Z,y}}(\cM_y, \cN_y) \otimes (I/I^2)^* \to
\rExt^{q+2}_{\cO_{Z_y}}(\cM, \cN),
$$
is the Eisenbud operator arising from the surjection $\cO_{X,y} \onto
\cO_{Z,y}$ with kernel $I$. In particular, it is independent of the
choice of $\cE_\cdot$.
\end{lem}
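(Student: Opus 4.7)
The plan is to produce $\chi^q$ from the chain map $t$ by applying $\uHom_{\cO_Z}(-,\cN)$, and then to verify a stalk-wise characterization, from which uniqueness, independence of the choice of $\cE$, and naturality in $\cM$ will all follow. First, I would observe that $\gamma^*\cE$ is a locally free resolution of $\cM$: its terms are locally free by construction, it is a complex by condition (3), and its augmentation to $\cM$ is a quasi-isomorphism by condition (4). Since $\gamma^*\cL^{-1}$ is locally free, $\gamma^*(\cE \otimes \cL^{-1}) \cong \gamma^*\cE \otimes \gamma^*\cL^{-1}$ is likewise a locally free resolution of $\cM \otimes \gamma^*\cL^{-1}$. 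For $\cN \in \coh Z$, I would apply $\uHom_{\cO_Z}(-,\cN)$ to $t$, use the canonical identification $\uHom_{\cO_Z}(\gamma^*\cE \otimes \gamma^*\cL^{-1},\cN) \cong \uHom_{\cO_Z}(\gamma^*\cE,\cN) \otimes \gamma^*\cL$, and pass to cohomology in the appropriate degree to obtain the desired map
\[ \chi^q_{(X,\cL,W)} \colon \uExt^q_{\cO_Z}(\cM,\cN)(1) \to \uExt^{q+2}_{\cO_Z}(\cM,\cN). \]
Naturality in $\cN$ is automatic from the functoriality of $\uHom$ in its second argument.

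Next I would compute $\chi^q$ at a stalk $y \in Z$. Because each $\cE_j$ is locally free, $(\gamma^*\cE)_y$ is a free resolution of $\cM_y$ over $\cO_{Z,y}$. The short exact sequence $0 \to \cL^{-1} \to \cO_X \to \gamma_*\cO_Z \to 0$ localizes at $y$ to $0 \to I \to \cO_{X,y} \to \cO_{Z,y} \to 0$, where $I = (\cL^{-1})_y$ is principal, generated by any local trivialization of $\cL^{-1}$ at $y$. The recursive definition of $\tilde t_j$, obtained by factoring the composite $\cE_j \to \cE_{j-2}$ through $\cE_{j-2} \otimes \cL^{-1}$, becomes on stalks exactly the construction from \cite[\S 1]{Ei80}: one lifts the differentials of $(\gamma^*\cE)_y$ to $\cO_{X,y}$-linear maps on the $(\cE_j)_y$, observes that their square is divisible by a generator of $I$, and divides. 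Thus $(\chi^q)_y$ coincides with the Eisenbud operator attached to that generator, and its $(I/I^2)^*$-linear structure is recovered via the identification $\cL|_Z \cong (I/I^2)^*$ coming from the defining sequence of $Z$ in $X$.

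Finally, a morphism of coherent sheaves is determined by its stalks, and the Eisenbud operator is a classical invariant that is independent of any auxiliary resolution and is natural in both $\cM_y$ and $\cN_y$; see \cite[\S 1]{Ei80}. Consequently $\chi^q$ is the unique natural transformation with the indicated stalk behaviour, and this at once yields independence of the choice of $\cE$ together with naturality in $\cM$ (naturality holds on stalks by classical Eisenbud-operator naturality, hence globally). The hard part will be the second step: matching the global factorization through $\cL^{-1}$ with Eisenbud's local division procedure, and keeping careful track of the twist by $\cL$ so that the stalk map agrees with the classical Eisenbud operator not merely as a map of abelian groups but with its correct $(I/I^2)^*$-linear structure.
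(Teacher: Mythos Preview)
Your proposal is correct and follows essentially the same approach as the paper: construct $\chi^q$ by applying $\uHom_{\cO_Z}(-,\cN)$ to the chain map $t$ on the locally free resolution $\gamma^*\cE$, then verify that on stalks this recovers Eisenbud's classical operator, and deduce uniqueness, naturality in $\cM$, and independence of $\cE$ from the stalk-wise characterization together with \cite[Section 1]{Ei80}. The paper's proof is terser but structurally identical; your added detail about matching the global factorization through $\cL^{-1}$ with the local division by a generator of $I$ is exactly the content the paper leaves implicit.
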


\begin{rem} We have the exact sequence $0 \to \cL^{-1} \to \cO_X \to
  \gamma_* \cO_Z \to 0$, and hence the stalk of $\cL^{-1}$ at $y$ is
  identified with $I$ and the stalk of $\gamma^* \cL^{-1}$ is identified with $I/I^2$.

Now, $I/I^2$ is a free $\cO_{Z,y}$-module of rank $1$, and
picking a generator $f$ of the principle ideal $I$ determines a basis of $I/I^2$ and
hence yields the more customary form of the Eisenbud operator for a
hypersurface:
$$
\rExt^q_{\cO_{Z,y}}(\cM_y, \cN_y) \to
\rExt^{q+2}_{\cO_{Z_y}}(\cM_y, \cN_y).
$$
But, this map depends on the choice of $f$ --- any other generator of
$I$ has the form $uf$ for a unit $u$ of $\cO_{X,x}$ and the operator
obtained from the choice of $uf$ is given by $\overline{u}$ times the operator
arising from $f$. 
\end{rem}

\begin{proof}
We use the locally free resolution $\gamma^* \cE$ of $\cM$
to compute $\uExt^*_{\cO_Z}(\cM, \cN)$. The chain map $t:
\gamma^*(\cE) \to \gamma^*(\cE \otimes \cL^{-1})$ induces
$$
\chi^q_{(X, \cL, W)}: \uExt^q_{\cO_Z}(\cM, \cN)(1) \to
\uExt^{q+2}_{\cO_Z}(\cM, \cN)
$$
for every $q \geq 0$. It follows from the construction that, locally, this agrees with
the map constructed by Eisenbud. The naturality and uniqueness
assertions may also be verified locally and thus follow from
\cite[Section 1]{Ei80}.
\end{proof}

\begin{proof}[Proof of Proposition \ref{prop:nat_trans_ext_stable}.]
Recall that by definition
$\suExt^r_{\cO_Z}( \cM, \cN ) = \cH^r \uHom(\gamma^* \bE_\cM, \cN)$ where $\bE_\cM$ denotes the image
of $\cM$ under the fixed inverse
of the functor $\coker$.
For $ r \geq 1$, there is a natural isomorphism
$\cH^r \uHom(\gamma^* \bE_\cM, \cN) \cong
\uExt^r_{\cO_Z}(\coker \bE_\cM, \cN)$ since $\gamma^* \bE_\cM$ is a
locally free resolution of $\coker \bE_\cM$ by \cite[5.2.1]{BW11a}. There is an isomorphism
$\coker(\bE_{\cM}) \xra{\cong} \cM$ in $\rDsg(Z \subset X)$ that is natural
in $\cM$. Such an isomorphism is an equivalence class of diagrams
\begin{equation} \label{E1216}
  \cM \leftarrow \cG \to \coker(\bE_{\cM})
\end{equation}
in $\rrperf(Z \into X)$ such that the cones of both morphisms lie in
$\Perf(Z)$.

For  $\cP \in \Perf(Z)$ and any $\cN$, we have $\uExt_{\cO_Z}^q(\cP, \cN) = 0$
for $q \gg 0$.
It follows that, given $\cN$, there is an integer $r_0$ such that both arrows in
$$
\uExt^r_{\cO_Z} (\cM, \cN)  \xra{\cong} \uExt^r_{\cO_Z}(\cG, \cN) 
\xla{\cong} \uExt^r_{\cO_Z}(\coker(\bE_{\cM}), \cN)
$$
are isomorphisms for $r \geq r_0$. This determines a family of
isomorphisms
$$
\eta_r: \uExt^r_{\cO_Z}( \cM, \cN) \xra{\cong}  \suExt^r_{\cO_Z}( \cM, \cN), r
\geq r_0.
$$
The induced map on the stalk at any point $y$ is given by the usual map
from ordinary to stable $\rExt$-modules by \ref{stable_ext_by_compr}
and \ref{prop:1450}. This proves, in
particular, that for $r \geq r_0$, $\eta_r$ is natural in both
arguments and that it is independent of choices.

Finally, to complete the proof of the proposition, we extend the
definition of $\eta_r$ to all $r$ by using Lemma \ref{Lem1116},
defining $\eta_r$ to be the composition of
$$
\uExt^r_{\cO_Z}( \cM, \cN) 
\xra{\chi^{\circ N}}
\uExt^{r+2N}_{\cO_Z}( \cM, \cN)(-N)
\xra{\eta_{r+2N}}  
\suExt^{r+2N}_{\cO_Z}( \cM, \cN)(-N)
\xra{\cong}
\suExt^r_{\cO_Z}( \cM, \cN)
$$
for $N \gg 0$, where the isomorphism on the right is from Proposition \ref{prop:1450}.
To see that the definition of $\eta_r$ is independent of the choice of $N \gg 0$ 
and that the resulting map is natural, it suffices to localize at an
arbitrary point. Locally, these properties are seen by combining
Lemma \ref{Lem1116}, the fact that 
$\suExt^r_{\cO_Z}( \cM, \cN) \xra{\cong}
\suExt^{r+2N}_{\cO_Z}( \cM, \cN)(-N)$ is, locally, given by the
Eisenbud operator, and the fact that the canonical map from $\rExt$ to
$\sExt$ commutes with the Eisenbud operator. The latter property is
seen to hold by using a
complete resolution to compute the stable Ext modules.
\end{proof}

\subsection{Some additional corollaries}
Again we return to the context and notation of \S \ref{assumptions}. For
complexes of $R$-modules $M$ and $N$ with bounded and finitely
generated cohomology, we define the graded $R[T_1,
\dots, T_c]$ modules:
$$
\begin{aligned}
\Ext {ev} R M N & = \bigoplus_{n \geq 0} \Ext {2n} R M N  \\
\Ext {odd} R M N  & = \bigoplus_{n \geq 0} \Ext {2n+1} R M N,
\end{aligned}
$$
where $\Ext {2n} R M N$ and $\Ext {2n+1} R M N$ lie in degree $n$ and
the $T_i$'s act as $\Ext {n} R {t_i} N$ for $t_i$ the $i$-th
Eisenbud operator on $M$. 

When $M$ has finite projective
dimension over $Q$, Gulliksen proved in \cite{Gu74} that $\Ext {ev} R
M N$ and $\Ext {odd} R M N$ are finitely generated graded modules over the
   ring $R[T_1, \ldots, T_c]$. They therefore determine coherent
   sheaves on $\ps R$, which we write as
$\widetilde{    \Ext {ev} R M N}$ and $\widetilde{ \Ext {odd} R M
  N}$. Note that while Gulliksen did not use Eisenbud operators, it was
shown in \cite{AvSu98} that the
   action used by Gulliksen coincides, up to a sign, with the Eisenbud
   operators and moreover Gulliksen's result, which was originally stated only
   for modules, was extended to complexes of
   $R$-modules with bounded and finitely generated cohomology.

\begin{cor}
\label{sheafify_ext}
Let $M$ and $N$ be
complexes of $R$-modules with bounded and finitely
generated cohomology and assume that $M$ is perfect over $Q$. Set $\cM
= \beta_* \pi^* M$ and $\cN = \beta_* \pi^* N$, where $\beta : \ps R \into Y$ is the canonical inclusion and $\pi: \ps R
\to \Spec R$ is the canonical proper map. For $q \gg 0$,
there are isomorphisms of
coherent sheaves on $Y$:
\[ 
\begin{aligned}
\uExt_{\cO_Y}^{2q}( \cM, \cN)(-q) & \cong \RY_* \widetilde{ \Ext {ev} R M N
} \\
\uExt_{\cO_Y}^{2q+1}( \cM, \cN)(-q) & \cong \RY_* \widetilde{
    \Ext {odd} R M N}.
\end{aligned}
\]
\end{cor}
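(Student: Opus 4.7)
The plan is to identify both sides as coherent sheaves on $Y = \Proj(B)$, where $B := Q[T_1,\dots,T_c]/(W)$, by comparing their graded modules of global sections
\[
\Gamma_*(\cF) \;:=\; \bigoplus_{k \geq 0} \Gamma(Y, \cF(k))
\]
in sufficiently high internal degrees and then invoking the Serre equivalence $M^\bullet \mapsto \widetilde{M^\bullet}$ between finitely generated graded $B$-modules (modulo those vanishing in high degrees) and coherent $\cO_Y$-modules. I treat only the even case; the odd case is strictly analogous.

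For the right-hand side, since $\RY\colon \ps R \into Y$ is a closed immersion with $\RY^* \cO_Y(1) = \cO_{\ps R}(1)$, the projection formula together with Serre's theorem, applied to the finitely generated graded $R[T_1,\dots,T_c]$-module $\Ext{ev}{R}{M}{N}$ (finite generation by Gulliksen, as recalled just before the corollary), give
\[
\Gamma\bigl(Y, (\RY_* \widetilde{\Ext{ev}{R}{M}{N}})(k)\bigr) \;\cong\; \Gamma\bigl(\ps R, \widetilde{\Ext{ev}{R}{M}{N}}(k)\bigr) \;\cong\; \Ext{2k}{R}{M}{N}
\]
for $k \gg 0$, with the transition $k \rightsquigarrow k+1$ given by multiplication by $T_i$ agreeing with the $i$-th Eisenbud operator $t_i$ on the Ext module, by construction of the $B$-module structure.

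For the left-hand side, I would chain together four natural isomorphisms. First, the twisted periodicity of Proposition \ref{prop:1450} gives $\suExt^{2q}_{\cO_Y}(\cM,\cN)(j) \cong \suExt^{2q+2j}_{\cO_Y}(\cM,\cN)$ for $j \geq 0$. Second, $\uExt^{r}_{\cO_Y}(\cM,\cN) \cong \suExt^{r}_{\cO_Y}(\cM,\cN)$ for $r \gg 0$ by Proposition \ref{prop:nat_trans_ext_stable}. Third, $\Gamma(Y, \suExt^{r}_{\cO_Y}(\cM,\cN)) \cong \sExt_R^{r}(M,N)$ for $r \gg 0$ by Corollary \ref{isom_ext_global_sections_hom}. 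Fourth, $\sExt_R^{r}(M,N) \cong \Ext{r}{R}{M}{N}$ for $r \gg 0$, as recalled in the introduction. Taking $r = 2k$ with $k \geq q$ and $q \gg 0$, these combine to give
\[
\Gamma\bigl(Y, \uExt^{2q}_{\cO_Y}(\cM,\cN)(-q)(k)\bigr) \;\cong\; \Ext{2k}{R}{M}{N}.
\]

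The decisive step is to check that the $T_i$-actions match: on the left, multiplication by $T_i \in \Gamma(Y,\cO_Y(1))$ determines the graded-module structure on $\Gamma_*$, and Corollary \ref{isom_ext_global_sections_hom} is precisely the statement that this corresponds to the Eisenbud operator $t_i$ via identifications (iii) and (iv). The resulting natural isomorphism of graded $B$-modules in degrees $\geq$ some fixed bound then yields, under $\widetilde{(-)}$, the claimed isomorphism of coherent sheaves on $Y$. The main subtlety will be the bookkeeping of twists and actions through the four identifications — ensuring in particular that the $T_i \in \Gamma(Y,\cO_Y(1))$ appearing in Corollary \ref{isom_ext_global_sections_hom} is the same $T_i$ that appears in Gulliksen's $R[T_1,\dots,T_c]$-module structure on $\Ext{ev}{R}{M}{N}$ — but this compatibility is exactly the content of Theorem \ref{thm:Eisenbud} packaged through Corollary \ref{isom_ext_global_sections_hom}.
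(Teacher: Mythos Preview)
Your proposal is correct and follows essentially the same strategy as the paper's proof: compare the graded $R[T_1,\dots,T_c]$-modules of global sections of both sides in high degrees using Corollary~\ref{isom_ext_global_sections_hom}, Proposition~\ref{prop:nat_trans_ext_stable}, and the isomorphism $\rExt^n \cong \sExt^n$ for $n\gg 0$ (compatibly with Eisenbud operators), and then pass to associated sheaves. The paper presents the chain of isomorphisms $\bigoplus_{n\ge q}\Ext{2n}{R}{M}{N}\cong\bigoplus_{n\ge q}\rH^0(Y,\uExt^{2q}_{\cO_Y}(\cM,\cN)(n-q))$ directly, whereas you compute $\Gamma_*$ of each side separately and match them, but the ingredients and logic are the same.
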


\begin{proof}
For $n \gg 0$, we have natural isomorphisms $\Ext n R M N \cong
\sExt^n_R(M,N)$ that commute with the Eisenbud operators. 
It thus follows from Corollary \ref{isom_ext_global_sections_hom} and
Proposition \ref{prop:nat_trans_ext_stable}
that, for $q \gg 0$, we have  isomorphisms
$$
\bigoplus_{n \geq q} \Ext {2n} R M N \cong
\bigoplus_{n \geq q} \rH^0( Y, \uExt^{2n}_{\cO_Y}(\cM, \cN) )
\cong
\bigoplus_{n \geq q} \rH^0( Y, \uExt^{2q}_{\cO_Y}(\cM, \cN)(n-q) )
$$ 
of graded $R[T_1, \ldots,
T_c]$-modules. The coherent sheaf on $Y$ associated to the 
first of these is $\beta_* \widetilde{ \Ext {ev} R M N}$ and the coherent
sheaf associated to the
last of
these is $\uExt^{2q}_{\cO_Y}(\cM, \cN)(-q)$.

The proof for the odd degree case is identical.
\end{proof}

\begin{rem} 
The results above recover Gulliksen's theorem on the
finiteness of $\Ext * R M N$ over $R[T_1, \ldots, T_c]$. Indeed, since
$\uExt^q_{\cO_Y}(\cM, \cN)$
is coherent, so is $\YQ_* \uExt^q_{\cO_Y}(\cM, \cN)$ where $\gamma: Y
\into \ps Q$ is the canonical inclusion. Thus the $Q[T_1, \ldots,
T_c]$-module 
\[\bigoplus_{n \geq 0} \rH^0( \ps Q, \YQ_* \uExt^d_{\cO_Y}(\cM, \cN)(n) )\] is finitely generated.
Hence so
is $(\Ext {ev} R M N)_{\geq q} $ for $q \gg 0$, by Corollary \ref{isom_ext_global_sections_hom} and
Proposition \ref{prop:nat_trans_ext_stable}. Adding on the finitely generated
$R$-module $(\Ext {ev} R M N)_{< q} $, we see that $\Ext {ev} R M
N$ is finitely generated over $Q[T_1, \ldots, T_c]$. Finally, since
$\Ext {ev} R M N$ is annihilated by $f_1, \ldots, f_c$, it is finitely
generated over $R[T_1, \ldots, T_c]$.
\end{rem}

\begin{cor} \label{suppcor}
Let $M$ and $N$ be
complexes of $R$-modules with bounded and finitely
generated cohomology and assume that $M$ is perfect over $Q$. Let $\cM
= \beta_* \pi^* M$ and
$\cN = \beta_* \pi^* N$. For all $n \in \Z$, the support of the $\cO_Y$-module
$\suExt^n_{\cO_Y}(\cM, \cN)$ is contained in $\ps R$, and
we have equalities 
$$
\begin{aligned}
\supp \suExt^{0}_{\cO_Y}(\cM, \cN) & =
\supp \widetilde{ \Ext {ev} R M N } \\
\supp \suExt^{1}_{\cO_Y}(\cM, \cN)& =
\supp \widetilde{ \Ext {odd} R M N }
\end{aligned}
$$
of closed subsets of $\ps R$.
\end{cor}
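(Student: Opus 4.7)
The plan is to bootstrap from the stalk formula in Proposition \ref{prop:1450}, the twisted periodicity of $\suExt$, the comparison $\uExt^q \cong \suExt^q$ for $q \gg 0$ from Proposition \ref{prop:nat_trans_ext_stable}, and the explicit identification supplied by Corollary \ref{sheafify_ext}. The corollary is essentially a bookkeeping synthesis of results already in hand.

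First I would handle the support-containment assertion. Because $\beta\colon \ps R \into Y$ is a closed immersion, the sheaves $\cM = \beta_* \pi^* M$ and $\cN = \beta_* \pi^* N$ both vanish on stalks at any $y \in Y \setminus \ps R$. Proposition \ref{prop:1450} gives the stalk formula
\[
\suExt^n_{\cO_Y}(\cM, \cN)_y \cong \sExt^n_{\cO_{Y,y}}(\cM_y, \cN_y),
\]
and the right-hand side is zero when $\cM_y = 0$ (since $\Hom$ out of the zero object vanishes). Thus $\supp \suExt^n_{\cO_Y}(\cM,\cN) \subseteq \ps R$ for every $n \in \Z$.

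Next I would establish the even equality. Twisted periodicity from Proposition \ref{prop:1450} gives
\[
\suExt^0_{\cO_Y}(\cM, \cN) \cong \suExt^{2q}_{\cO_Y}(\cM, \cN)(-q) \quad \text{for every } q \geq 0,
\]
and since $\cO_Y(1)$ is a line bundle, the twist $(-q)$ does not change support. For $q \gg 0$, Proposition \ref{prop:nat_trans_ext_stable} identifies $\uExt^{2q}_{\cO_Y}(\cM, \cN) \xra{\cong} \suExt^{2q}_{\cO_Y}(\cM, \cN)$ (using that $\cM$ is relatively perfect on $Y \into \ps Q$, which is where $M$ being perfect over $Q$ is used). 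Combining with Corollary \ref{sheafify_ext}, for such $q$,
\[
\suExt^0_{\cO_Y}(\cM, \cN) \cong \uExt^{2q}_{\cO_Y}(\cM, \cN)(-q) \cong \beta_* \widetilde{\Ext{ev}{R}{M}{N}}.
\]
Taking supports and using that pushforward along the closed immersion $\beta$ identifies supports (under the identification of $\ps R$ with its image in $Y$) yields
\[
\supp \suExt^0_{\cO_Y}(\cM, \cN) \;=\; \supp \beta_* \widetilde{\Ext{ev}{R}{M}{N}} \;=\; \supp \widetilde{\Ext{ev}{R}{M}{N}}.
\]

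Finally, the odd case is completely parallel: one uses $\suExt^1_{\cO_Y}(\cM, \cN) \cong \suExt^{2q+1}_{\cO_Y}(\cM, \cN)(-q)$ together with the odd half of Corollary \ref{sheafify_ext}. There is no substantive obstacle — the only thing to verify with any care is the hypothesis required to invoke Proposition \ref{prop:nat_trans_ext_stable} (namely, that $\cM$ lies in $\rrperf(Y \into \ps Q)$), which is exactly where the assumption that $M$ is perfect over $Q$ enters.
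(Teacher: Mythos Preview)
Your proposal is correct and follows essentially the same route as the paper: combine the twisted periodicity of $\suExt$ from Proposition \ref{prop:1450}, the isomorphism $\suExt^q \cong \uExt^q$ for $q \gg 0$ from Proposition \ref{prop:nat_trans_ext_stable}, and the identification in Corollary \ref{sheafify_ext}. The only minor difference is that you prove the containment $\supp \suExt^n \subseteq \ps R$ directly via the stalk formula, whereas in the paper it is implicit from the isomorphism with $\beta_*$ of a sheaf on $\ps R$ together with periodicity.
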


\begin{proof}
For $n \gg 0$ we have that $\suExt^{n}_{\cO_Y}(\cM, \cN) \cong
\uExt^{n}_{\cO_Y}(\cM, \cN)$ by Proposition
\ref{prop:nat_trans_ext_stable}. For $q \gg 0$, by Corollary \ref{sheafify_ext}
we have that $\uExt_{\cO_Y}^{2q}( \cM, \cN)(-q) \cong \RY_*
\widetilde{ \Ext {ev} R M N}$ and
$\uExt_{\cO_Y}^{2q+1}( \cM, \cN)(-q) \cong \RY_* \widetilde{
    \Ext {odd} R M N}$. Finally, by \ref{prop:1450},
  $\suExt_{\cO_Y}^{2q}( \cM, \cN)(-q) \cong \suExt_{\cO_Y}^{0}( \cM,
  \cN)$ and similarly for the odd case.
\end{proof}

We close with a definition.
\begin{defn}  \label{suppdef}
Let $M$ and $N$ be
complexes of $R$-modules with bounded and finitely
generated cohomology and assume that $M$ is perfect over $Q$.
The \emph{stable support set} of $(M,N)$ 
is the closed subset $V_Q^{\vf}(M, N)$ of $\ps R$ defined as
$$
V_Q^{\vf}(M,N) := 
\supp \widetilde{ \Ext {ev} R M N }
\cup
\supp \widetilde{ \Ext {odd} R M N }.
$$

The \emph{stable support set} of $M$ is defined to
be $V_Q^{\vf}(M) := V_Q^{\vf}(M, M)$. 
\end{defn}
In Section \ref{suppsec} we study this support further and
clarify its relation to the notions of support for $R$-modules defined
previously in
\cite{AvBu00,BIK08,1105.4698}. 

\begin{rem} \label{supp_rem}
By Corollary \ref{suppcor}, there is an equality
$$
V_Q^{\vf}(M,N) = 
\supp \suExt^{0}_{\cO_Y}(\cM, \cN)
 \cup \supp \suExt^{1}_{\cO_Y}(\cM, \cN).
$$
If $N$ is also perfect over $Q$, then there is an
equality
\[V_Q^{\vf}(M,N) = \supp \cH^{2i} \HomMF( \bE_M, \bE_N) \cup \supp
\cH^{2i+1} \HomMF( \bE_M, \bE_N)\]
for any $i \in \Z$.
This follows from \ref{stable_ext_hommf} and the definition of
$\HomMF( \bE_M, \bE_N)$ as a twisted periodic complex which shows that
$\cH^{i+2} \HomMF( \bE_M, \bE_N) \cong (\cH^{i} \HomMF( \bE_M, \bE_N)
)(1)$ for all $i \in \Z$.
\end{rem}

\begin{rem}
 In fact, there is a containment
\[\supp
 \suExt^{0}_{\cO_Y}(\cM, \cN) \supseteq \supp
 \suExt^{1}_{\cO_Y}(\cM, \cN).\] Indeed, for a local hypersurface ring
 $T$, and a finite $T$-module $M$, it follows from \cite[4.2]{AvBu00}
 that $M$ has finite projective dimension if and only if $\sExt^0_T(M,
 M ) = 0$ which implies that $\sExt^1_T(M,M) = 0$.
\end{rem}

\section{Proof of Theorem~\ref{thm:Eisenbud}}
\label{operator_proof_section}

In this section we prove Theorem
\ref{thm:Eisenbud}. The first step is to associate to a matrix factorization $\bE$ an explicit complex of finitely generated
projective modules that represents $\Psi(\bE)$ in $\Dsing(R)$, where
$\Psi$ is the equivalence of Corollary \ref{orlovcor}. We remain in
the context and under the assumptions of \S \ref{assumptions}.

For any scheme $X$, we write $\cC(X)$ for the category of arbitrary
complexes of quasi-coherent sheaves and $\cC^b(X)$ for the full
subcategory consisting of complexes with bounded and coherent cohomology. If $X = \Spec R$, we write
  $\cC(R)$ and $\cC^b(R)$.

\begin{defn}\label{def:flat}
Set $\RQ^\sharp$ to be the additive functor from $MF(\ps Q, \cO(1), W)_\exact$ to
  $\cC^b(\ps R)$ that sends an object $\bE = (\cE_1 \to \cE_0 \to \cE_1(1))$ to
$$
\RQ^\sharp \bE:= \left( 
\cdots \to \RQ^*\cE_0(-2) \to \RQ^*\cE_1(-1) \to
\RQ^*\cE_0(-1) \to 0\right),
$$
where $\RQ^*\cE_0(-1)$ is located in cohomological degree
$c-1$. The action on morphisms is the obvious one. Recall that $\RQ: \ps R
\into \ps Q$ is the canonical inclusion.
\end{defn}
 
\begin{rem}   \label{RQ_bdd_cohom}
The complex $\RQ^\sharp \bE$ is exact everywhere except on
  the right (i.e., except in
  degree $c-1$). Indeed by \cite[5.2.1]{BW11a}, $\YQ^*
  \bE$ is an exact complex of locally free sheaves, where $\YQ: Y
  \into \ps Q$ is the canonical inclusion and $\YQ^* \bE$ is defined in \eqref{mf_pullback_defn}. The sheaf $\cM =
  \cH^0( \YQ^* \bE^{\leq 0} ) = \coker \partial^0_{\YQ^* \bE}$ has an obvious right resolution
  by locally free sheaves. Thus since $\RY$ has finite flat dimension,
  where $\RY: \ps R \into Y$ is the canonical inclusion, $\L
  \RY^* \cM \cong \RY^* \cM$, i.e.\ $\L
  \RY^* \cM$ has cohomology only
  in a single degree. Since $\YQ^* \bE^{\leq 0}$ is a locally free
  resolution of $\cM$, this translates to the fact that $\RY^* \YQ^*
  \bE^{\leq 0} \cong \RQ^* \bE^{\leq 0}$
  only has cohomology in a single degree. The claim follows
  immediately.
\end{rem}

Recall that $\RY: \ps R \into
Y$ is the canonical inclusion, and that $\RY^\flat$ is the right
adjoint to $\R \RY_*$. By \cite[III.1.5, III.7.3]{MR0222093}, the functor $\RY^\flat$ sends a complex
$\cF$ to $\L\RY^*(\cF)(-1)[-c+1]$.
By \cite[5.2.1]{BW11a}
$$ \cdots
\to \YQ^* \cE_1(-1) \to \YQ^* \cE_0(-1) \to 0
$$
is a locally free resolution of $\coker(\bE)(-1)$. Thus there is a
functorial isomorphism in $\Db( \ps R)$:
\[
\L\RY^*(\coker(\bE)(-1))[-c+1] \cong
\RQ^\sharp \bE,
\]
since $\YQ \circ \RY= \RQ$.
We thus have a functorial isomorphism
\begin{equation}
  \label{eq:1752}
  \RQ^\sharp \bE \xra{\cong} \beta^\flat \coker \bE 
\end{equation}
in the category $\Db(\ps R)$.

Let $\cU = (U_1, \dots, U_c)$ denote the standard affine cover of $\ps R$
and let $\cP$ be a complex of quasi-coherent sheaves on $\ps R$. As
in Section \ref{setup}, $\Gamma(\cU, \cP)$ is
 the total complex associated to the bicomplex given by applying the
 Cech construction with respect to the cover $\cU$ degree-wise to $\cP$. We view this as a functor $\Gamma(
 \cU, - ): \cC^b(\ps R) \to \cC^b(R)$. There is a natural
 isomorphism
\begin{equation}
\label{eq:2246}
\Gamma( \cU, \cP ) \xra{\cong} \R \pi_* \cP \in \Db(\ps
R). 
\end{equation}
Combining \eqref{eq:1752} and \eqref{eq:2246} we see that for $\bE$ a
matrix factorization, we have a natural isomorphism:
\[ \R\pi_* \RY^\flat(\coker(\bE)) \cong \Gamma( \cU, \RQ^\sharp \bE).\]
Since $\R\pi_* \RY^\flat(\coker(-))$ induces the equivalence $\Psi: [MF] \to \rDsg(Q \onto R)$ of
Corollary \ref{orlovcor}, we have
proven the following:

\begin{prop}\label{main_equiv_explicit}
The functor $\Gamma( \cU, \RQ^\sharp -)$ induces a functor $[MF(\bP_Q^{c-1},
\cO(1), W)] \to \rDsg(Q \onto R)$, which we also write as $\Gamma( \cU,
\RQ^\sharp -)$, and there is an isomorphism of triangulated functors
\[ \Psi( - ) \xra{\cong} \Gamma( \cU, \RQ^\sharp -): [MF(\bP_Q^{c-1},
\cO(1), W)] \to \rDsg(Q \onto R), \]
where $\Psi$ is the equivalence of Corollary~\ref{orlovcor}.
\end{prop}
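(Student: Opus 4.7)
The plan is to observe that almost all the work has been done in the discussion preceding the proposition, and the proof is essentially an assembly of the functorial isomorphisms \eqref{eq:1752} and \eqref{eq:2246} together with a verification that $\Gamma(\cU, \RQ^\sharp -)$ factors through the Verdier quotient $[MF]$ and lands in $\rDsg(Q \onto R)$.

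First I would verify the factorization through $[MF]$. The composite functor $\Gamma(\cU, \RQ^\sharp -)$ from $MF_{\exact}$ to $\cC^b(R)$ preserves strict homotopies because both $\RQ^*$ and the Cech construction $\Gamma(\cU, -)$ are additive and preserve homotopies; hence it descends to $[MF]_\naive$ after passing to the homotopy category of complexes, and then to $\Db(R)$ and $\rDsg(Q \onto R)$. To see it kills locally contractible objects, note that for $\bE$ locally contractible, $\coker(\bE)$ is locally zero in $\Dsg(Y)$, so $\beta^\flat \coker(\bE)$ is locally zero in $\Dsg(\ps R)$, i.e.\ locally perfect; combined with \eqref{eq:1752}, $\RQ^\sharp \bE$ is a bounded complex of locally free coherent sheaves that is locally perfect on $\ps R$, hence perfect on $\ps R$. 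Applying $\R\pi_* \cong \Gamma(\cU, -)$ then produces a perfect complex of $R$-modules, which is zero in $\rDsg(Q \onto R)$. Thus $\Gamma(\cU, \RQ^\sharp -)$ descends to a triangulated functor $[MF] \to \rDsg(Q \onto R)$.

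Next I would assemble the natural isomorphism. For each matrix factorization $\bE$, chaining together the isomorphisms already produced in the discussion gives
\[
\Gamma(\cU, \RQ^\sharp \bE) \xra{\cong} \R\pi_*(\RQ^\sharp \bE) \xra{\cong} \R\pi_* \RY^\flat(\coker \bE) = \Psi(\bE),
\]
where the first arrow is \eqref{eq:2246} applied to $\RQ^\sharp \bE$ and the second is obtained by applying $\R\pi_*$ to the functorial isomorphism \eqref{eq:1752}. Naturality in $\bE$ follows because both building blocks are natural transformations. To upgrade this to an isomorphism of \emph{triangulated} functors, one checks compatibility with the suspension and with triangles; this is immediate because each of $\coker$, $\RY^\flat$, $\R\pi_*$, $\RQ^*$, and $\Gamma(\cU,-)$ is triangulated (on the appropriate derived categories), and the two comparison isomorphisms above are the standard ones arising from the adjunction and from Cech-to-derived-pushforward, both of which respect the triangulated structure.

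The only step that requires any care is the verification that locally contractible matrix factorizations go to perfect complexes, since one must be sure that ``locally perfect implies perfect'' can be applied; here the key point is that $\RQ^\sharp \bE$ is already a bounded complex of locally free coherent sheaves, so the question reduces to showing its bounded cohomology is locally zero in $\Dsg(\ps R)$, which is immediate from \eqref{eq:1752} and the hypothesis. Everything else is a routine assembly of the ingredients already in place.
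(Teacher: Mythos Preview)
Your second paragraph is exactly the paper's argument: the discussion preceding the proposition assembles \eqref{eq:1752} and \eqref{eq:2246} into a natural isomorphism $\R\pi_* \RY^\flat(\coker(\bE)) \cong \Gamma(\cU,\RQ^\sharp\bE)$ and then observes that since $\R\pi_* \RY^\flat(\coker(-))$ already induces the equivalence $\Psi$ of Corollary~\ref{orlovcor}, the same is automatically true of $\Gamma(\cU,\RQ^\sharp -)$.

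Your first paragraph, however, is both unnecessary and contains a slip. It is unnecessary because once you have a natural isomorphism between $\Gamma(\cU,\RQ^\sharp -)$ and a functor that is already known to descend to $[MF]$ and land in $\rDsg(Q\onto R)$, the descent and the target are inherited for free; there is no need to re-verify that locally contractible objects go to zero. The slip is the claim that $\RQ^\sharp\bE$ is a \emph{bounded} complex of locally free sheaves: by Definition~\ref{def:flat} it is unbounded to the left. What is true (Remark~\ref{RQ_bdd_cohom}) is that it has bounded cohomology, so it defines an object of $\Db(\ps R)$; perfection is local on $\Db$, so your ``locally perfect $\Rightarrow$ perfect'' step survives once you replace ``bounded complex'' by ``complex with bounded cohomology''. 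But again, the paper avoids this detour entirely by transporting the factorization along the natural isomorphism.
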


As the sheaves in the complex $\Gamma( \cU,
\RQ^\sharp \bE)$ are not coherent, we
look for a complex of coherent sheaves that represents it in $\rDsg(Q
\onto R)$.

\begin{defn} \label{def:F} Define $F: \cC^b(\ps R) \to
  \cC(R)$ to be the functor given by applying
$$
\rH^{c-1}(\ps R, -)[-c+1]
$$ 
degree-wise. In particular
$$
\begin{aligned}
F(\RQ^\sharp \bE) :=
\big(
\cdots & \to \rH^{c-1}(\ps R, \RQ^*\cE_0(-2)) \to
\rH^{c-1}(\ps R, \RQ^* \cE_1(-1))  \\
 & \to
\rH^{c-1}(\ps R, \RQ^* \cE_0(-1)) \to 0 \big),
\end{aligned}
$$
with $\rH^{c-1}(\ps R, \cE_0(-1))$ located in cohomological degree
$2c-2$.  
\end{defn}

If $\cE$ is any coherent sheaf on $\ps R$, there is a natural map of complexes
\begin{equation}\label{1830_eq}
\Gamma(\cU, \cE) \to \rH^{c-1}(
\Gamma(\cU, \cE) )[-c+1]
\end{equation}
since $(\Gamma(\cU, \cE))^i = 0$ for $i > c -1$.
Since $\rH^{i}(\Gamma( \cU,
\cE)) \cong \rH^i( \ps R, \cE)$, the map \eqref{1830_eq} is a quasi-isomorphism if
and only if $\rH^i(\ps R, \cE) = 0$ for all $i \ne c-1$.
Applying \eqref{1830_eq} degree-wise to a chain complex $\cP$ yields a map of
bicomplexes and
taking the total complex gives a map
$\eta_\cP: \Gamma(\cU, \cP) \to F( \cP ).$
It is natural in $\cP$ and so gives a natural transformation
\begin{equation} \label{etadef}
\eta: \Gamma(\cU, -) \to F(-).
\end{equation}
{}From this construction we see that if $\rH^i(\ps R, \cP^j) = 0$ for
all $i \ne c-1$ and all $j$ then $\eta_\cP$ is a
quasi-isomorphism. Also note that
for all $\cP$ there is a commutative diagram
\begin{equation}
\xymatrix{ \Gamma(
  \cU, \cP[1] ) \ar[d]_\cong \ar[r]^{\eta_{\cP[1]}} & \ar[d]^\cong F( \cP[1] ) \\
\Gamma( \cU, \cP )[1] \ar[r]_{\eta_{\cP}[1]} & 
F(\cP)[1]}
\label{eq:graded_func} 
\end{equation}
where $[1]$ is the shift functor on the
respective categories of complexes.

For a complex $\cP$, recall that  $\cP^{\leq m}$ is the brutal
truncation in cohomological degree $m$.
There are natural maps $\cP \to \cP^{\leq m}$ and $\cP^{\leq m}
\to \cP^{\leq n}$ for $m \geq n$.

\begin{defn} 
\label{def_suff_small}
An
integer $m$ is {\em sufficiently small} for an object
$\bE = (\cE_1 \to \cE_0 \to \cE_1(1))$ of $MF(\ps Q, \cO(1), W)$ if every coherent sheaf $\cE$ appearing in the
complex $\RQ^\sharp(\bE)^{\leq m} = (\cdots \to \RQ^* \cE_1(-1) \to
\RQ^* \cE_0(-1) \to 0)^{\leq m}$ satisfies 
\begin{enumerate}
\item   $\rH^i(\ps R, \cE) = 0$ for all $i \ne c-1$ and 
\item $\rH^{c-1}(\ps R, \cE)$ is a 
projective $R$-module.
\end{enumerate}
\end{defn}
The name of this condition is justified by:

\begin{lem} \label{lem:good}
Let $\bE$ be an object of $MF(\ps Q, \cO(1), W)$. There is
an integer $m_0$, which
depends on $\bE$, such
that all $m \leq m_0$ are sufficiently small for $\bE$. Moreover, if
$m$ is sufficiently small for $\bE$, then $F\left ( (\RQ^\sharp
  \bE)^{\leq m} \right )$ 
is a right bounded complex of finitely generated projective
$R$-modules and the map
$$
\eta: \Gamma\left (\cU, (\RQ^\sharp \bE)^{\leq m} \right )  \map{\sim} F\left ( (\RQ^\sharp
  \bE)^{\leq m} \right )
$$ 
is a quasi-isomorphism.
\end{lem}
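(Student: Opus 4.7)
The plan is to reduce both assertions of the lemma to a Serre-vanishing-type statement for negative twists of the two locally free coherent sheaves $\RQ^*\cE_0$ and $\RQ^*\cE_1$ on $\ps R = \bP^{c-1}_R$. Specifically, I would first establish that there exists $k_1 \geq 1$ such that for all $k \geq k_1$ and $j = 0,1$, the cohomology $\rH^i(\ps R, \RQ^*\cE_j(-k))$ vanishes for $i \neq c-1$, and $\rH^{c-1}(\ps R, \RQ^*\cE_j(-k))$ is a finitely generated projective $R$-module. Direct inspection of $\RQ^\sharp \bE$ shows that the sheaf in cohomological degree $j \leq c-1$ has twist $-\lceil (c-j)/2 \rceil$; so choosing $m_0 := c + 1 - 2k_1$, every sheaf in $(\RQ^\sharp \bE)^{\leq m}$ for $m \leq m_0$ is of the form $\RQ^*\cE_j(-k)$ with $k \geq k_1$, making $m$ sufficiently small for $\bE$ in the sense of Definition \ref{def_suff_small}.

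For the Serre-vanishing step I would use relative Serre duality for the smooth proper morphism $\pi: \ps R \to \Spec R$ together with classical Serre vanishing applied to the dual locally free sheaf $\cG^\vee$ (where $\cG = \RQ^*\cE_j$). Serre's theorem supplies $k_1$ such that $\rH^i(\ps R, \cG^\vee(k-c)) = 0$ for $i > 0$ and $\pi_*\cG^\vee(k-c)$ is a finitely generated $R$-module; Grauert's theorem (cohomology and base change applied fiberwise over $\Spec R$) then upgrades this to local freeness after possibly enlarging $k_1$. Relative Serre duality with $\omega_\pi = \cO(-c)$ yields a natural isomorphism
\[
\R\pi_*\cG(-k) \cong \RHom_R(\pi_*\cG^\vee(k-c),\, R)[-c+1],
\]
so $\R\pi_*\cG(-k)$ is concentrated in degree $c-1$ with value the $R$-dual of a finitely generated projective, hence itself a finitely generated projective $R$-module.

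For the second assertion, assuming $m \leq m_0$ is sufficiently small, both conditions of Definition \ref{def_suff_small} immediately give that $F((\RQ^\sharp\bE)^{\leq m})$ is a right-bounded complex (supported in cohomological degrees $\leq m + c - 1$) whose terms are finitely generated projective $R$-modules. For the quasi-isomorphism statement about $\eta$, I would view both $\Gamma(\cU, (\RQ^\sharp \bE)^{\leq m})$ and $F((\RQ^\sharp \bE)^{\leq m})$ as total complexes of the bicomplexes obtained by applying $\Gamma(\cU, -)$ and $\rH^{c-1}(\ps R, -)[-c+1]$ degree-wise to $(\RQ^\sharp \bE)^{\leq m}$. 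Condition (1) of Definition \ref{def_suff_small} says that on each column the natural map $\Gamma(\cU, \cP^j) \to \rH^{c-1}(\ps R, \cP^j)[-c+1]$ is a quasi-isomorphism, by the observation following \eqref{etadef}. Since the columns have uniformly bounded length (the Cech complex has $c$ rows) and the horizontal direction is bounded above by the brutal truncation, a standard filtration argument promotes the column-wise quasi-isomorphism to a quasi-isomorphism of total complexes.

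The main obstacle is the Serre-vanishing step, specifically securing the \emph{projectivity} (as opposed to mere finite generation) of $\rH^{c-1}(\ps R, \cG(-k))$ over $R$ for $k \gg 0$ without any regularity hypothesis on $R$ --- the vanishing in other cohomological degrees is more standard. The Serre-duality/Grauert route sketched above handles this cleanly once one arranges uniform fiberwise higher-cohomology vanishing over $\Spec R$, which the Noetherian hypothesis guarantees by taking $k$ sufficiently large.
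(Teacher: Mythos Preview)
Your proposal is correct and follows essentially the same route as the paper: Serre vanishing for the duals $\cG^\vee(k-c)$, projectivity of $\pi_*\cG^\vee(k-c)$ via cohomology-and-base-change (the paper cites EGA III 7.9.10, which is the same statement you attribute to Grauert), and then Serre--Grothendieck duality to transfer both vanishing and projectivity to $\rH^{c-1}(\ps R,\cG(-k))$; the quasi-isomorphism assertion is exactly the column-wise argument recorded just after \eqref{etadef}.
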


\begin{proof}
Consider the complex $\RQ^\sharp \bE$. By construction, if for some
$i$, the sheaf $(\RQ^\sharp \bE)^{i}$ is nonzero,
then 
\begin{equation}
\label{E1527}
(\RQ^\sharp \bE)^{i-2}
\cong (\RQ^\sharp \bE)^i(-1).
\end{equation}
Let $(-)^\vee$ denote
$\uHom_{\ps R}({-},{\cO_{\ps R}})$. It follows from 
Serre Vanishing and the isomorphism \eqref{E1527} that
for $m \ll 0$,
every coherent sheaf $\cE$ in
$\RQ^\sharp \bE^{\leq m}$ satisfies 
\[\rH^i(\ps R, \cE(-c)^\vee) = 0 \text{ for all } i \ne 0.\]
By \cite[7.9.10]{MR0163911}, $\rH^0(\ps R, \cE(-c)^\vee)$ is thus a finitely
generated projective $R$-module for each such $\cE$.
By a corollary to Serre-Grothendieck
Duality, see \cite[III.5.2]{MR0222093}, we get
$$
\begin{aligned}
\rHom_R(\rH^i(\ps R, \cE(-c)^\vee), R)
& \cong \rExt^{c-1-i}_{\cO_{\ps R}}(\cE(-c)^\vee, \cO(-c)) \\
& \cong \rExt^{c-1-i}_{\cO_{\ps R}}(\cO_X, \cE) \\
&\cong \rH^{c-1-i}(\ps R, \cE)
\end{aligned}
$$
for all $\cE$ in $\RQ^\sharp \bE^{\leq m}$ and all $i$. The first two
assertions follow.

The last part was noted below the definition of $\eta$.
\end{proof}

\begin{prop} \label{prop:F}
For every object $\bE$ of $MF(\ps Q, \cO(1), W)$ and integer $m$
that is sufficiently small for $\bE$, there is an isomorphism
$$
\zeta_{\bE}^m: \Gamma(\cU,\RQ^\sharp \bE) \map{\cong} F\left ( (\RQ^\sharp \bE)^{\leq
    m} \right ) \text{
  in } \Dsg(R)
$$
such that the following naturality condition holds: given a strict map $g: \bE \to
\bF$ and an integer $n \leq m$ that is sufficiently small for $\bF$, the diagram
$$
\xymatrix{
\Gamma(\cU,\RQ^\sharp \bE) \ar[dd]^{\Psi(g)}  \ar[r]^(.4){\cong}_(.4){\zeta_{\bE}^m}   &
F\left ( (\RQ^\sharp \bE)^{\leq
    m} \right )
\ar[d]^{F\left ( (\RQ^\sharp g)^{\leq
    m} \right )} \\
 & F\left ( (\RQ^\sharp \bF)^{\leq
    m} \right ) \ar[d]^{canonical} \\
\Gamma(\cU,\RQ^\sharp \bF) \ar[r]^(.4){\cong}_(.4){\zeta_{\bF}^n}      &   F\left ( (\RQ^\sharp \bE)^{\leq
    n} \right )   \\
}
$$
commutes in $\Dsg(R)$.
\end{prop}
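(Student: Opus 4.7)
The plan is to define $\zeta_\bE^m$ as the honest composition of morphisms in $\cC^b(R)$
\[
\Gamma(\cU, \RQ^\sharp \bE) \xra{\,a_\bE^m\,} \Gamma(\cU, (\RQ^\sharp \bE)^{\leq m}) \xra{\;\eta\;} F((\RQ^\sharp \bE)^{\leq m}),
\]
where $a_\bE^m$ is obtained by applying $\Gamma(\cU, -)$ to the canonical brutal truncation $\tau_m \colon \RQ^\sharp \bE \to (\RQ^\sharp \bE)^{\leq m}$, and $\eta$ is the natural transformation from \eqref{etadef}. By Lemma~\ref{lem:good}, when $m$ is sufficiently small for $\bE$ the map $\eta$ is a quasi-isomorphism and $F((\RQ^\sharp \bE)^{\leq m})$ is a right-bounded complex of finitely generated projective $R$-modules, so $\eta$ is already invertible in $\Db(R)$. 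The crux is therefore to show that $a_\bE^m$ is invertible in $\Dsg(R)$, after which $\zeta_\bE^m = \eta \circ a_\bE^m$ is automatically an isomorphism there.

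For this, the kernel of $\tau_m$ in $\cC^b(\ps R)$ is the subcomplex $(\RQ^\sharp \bE)^{>m}$, a \emph{bounded} complex of locally free coherent sheaves on $\ps R$, nonzero only in degrees $m+1, \dots, c-1$. Under the natural isomorphism $\Gamma(\cU, -) \simeq \R\pi_*(-)$ of \eqref{eq:2246}, the cone of $a_\bE^m$ is identified in $\Db(R)$ with $\R\pi_*((\RQ^\sharp \bE)^{>m})[1]$. Since $(\RQ^\sharp \bE)^{>m}$ is a perfect complex on $\ps R$ and $\pi \colon \ps R \to \Spec R$ is flat and projective, $\R\pi_*$ sends it into $\Perf(R)$. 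Hence the cone of $a_\bE^m$ is perfect, $a_\bE^m$ becomes invertible in $\Dsg(R)$, and $\zeta_\bE^m$ is the desired isomorphism.

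For the naturality clause, given a strict morphism $g \colon \bE \to \bF$ and $n \leq m$ sufficiently small for $\bF$, the obvious square in $\cC^b(\ps R)$ with vertical maps $\RQ^\sharp g$ and $(\RQ^\sharp g)^{\leq m}$, together with the canonical further truncation $(\RQ^\sharp \bF)^{\leq m} \to (\RQ^\sharp \bF)^{\leq n}$, commutes in $\cC^b(\ps R)$ by functoriality of brutal truncation. Applying $\Gamma(\cU, -)$ and then $\eta$, and invoking the naturality of $\eta$, yields a commutative diagram in $\cC^b(R)$ and hence in $\Dsg(R)$. Finally, Proposition~\ref{main_equiv_explicit} identifies $\Gamma(\cU, \RQ^\sharp g)$ with $\Psi(g)$, placing the leftmost vertical arrow in the requested form.

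I expect the main obstacle to be the perfectness step in the second paragraph: one must justify that $\R\pi_*$ carries the bounded complex of locally free coherent sheaves $(\RQ^\sharp \bE)^{>m}$ into $\Perf(R)$, even though the individual \v{C}ech terms $\Gamma(U_{i_0 \cdots i_k}, \RQ^* \cE(j))$ are not finitely generated over $R$. The needed fact is standard for flat projective morphisms of bounded relative dimension, but it is the one place where external input (Serre's coherence theorem combined with the \v{C}ech description of $\R\pi_*$) must be invoked to pass from the unwieldy term-by-term description to a genuinely perfect representative.
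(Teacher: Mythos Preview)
Your proposal is correct and essentially identical to the paper's proof. The only cosmetic difference is that the paper defines $\zeta_\bE^m$ as the composite $\Gamma(\cU,\RQ^\sharp\bE)\xra{\eta}F(\RQ^\sharp\bE)\to F((\RQ^\sharp\bE)^{\leq m})$ and then observes, via the commuting square arising from naturality of $\eta$, that this equals your composite $\eta\circ a_\bE^m$; both proofs then invoke Lemma~\ref{lem:good} for the $\eta$ leg and the fact that $\R\pi_*$ preserves perfect complexes (the paper cites \cite[III.4.8.1]{MR0354655}) for the truncation leg.
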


\begin{proof}
Given such a pair $(\bE, m)$, define $\zeta_{\bE}^m$ 
to be the map in $\Dsg(R)$ represented by the composition of 
\begin{equation}
\Gamma(\cU,\RQ^\sharp \bE) \map{\eta} F(\RQ^\sharp \bE)
\to F\left ( (\RQ^\sharp \bE)^{\leq
    m} \right ),\label{eq:psibe-xrac-gamm}
\end{equation}
where
$F(\RQ^\sharp \bE)
\to F\left ( (\RQ^\sharp \bE)^{\leq
    m} \right )$ is induced by the canonical  map
$\RQ^\sharp \bE \to (\RQ^\sharp \bE)^{\leq m}$.

The naturality of $\eta$ shows that the diagram
\begin{equation} \label{L624}\xymatrix{\Gamma(\cU,\RQ^\sharp \bE) \ar[d] \ar[r]_\eta & F(\RQ^\sharp \bE) \ar[d] \\
\Gamma\left (\cU, (\RQ^\sharp \bE)^{\leq  m}\right ) \ar[r]_\eta &
F\left ( ( \RQ^\sharp
\bE)^{\leq  m} \right )}
\end{equation}
commutes, where the vertical maps are induced by $\RQ^\sharp \bE \to \RQ^\sharp
\bE^{\leq m}$.
Thus to show that \eqref{eq:psibe-xrac-gamm} is an
isomorphism it is enough to show that the left vertical map and
the lower
horizontal map in \eqref{L624} are isomorphisms.

The lower horizontal map  of \eqref{L624} is an isomorphism in $\Dsg(R)$ by Lemma \ref{lem:good}. The cone of $\RQ^\sharp(\bE) 
\to \RQ^\sharp(\bE)^{\leq m}$ is a bounded complex of locally free
coherent sheaves, i.e.\ is a perfect complex, and $\R\pi_* = \Gamma(\cU, -) $ maps
perfect complexes to perfect complexes by \cite[III.4.8.1]{MR0354655}. Thus the
left vertical map of \eqref{L624} is an isomorphism in $\Dsg(R)$.

The naturality assertion is evident from the construction.
\end{proof}

\begin{rem}
By \ref{eq:graded_func} and the construction of $\zeta$ above,
we see that there is a commutative diagram
\begin{equation} \xymatrix{
\Gamma\left ( \cU, \RQ^\sharp
  (\bE[2]) \right ) \ar[r]^\cong \ar[d]_{\cong}& F\left ( (\RQ^\sharp \bE[2])^{\leq m}\right) \ar[d]^{\cong} \\
\Gamma( \cU, \RQ^\sharp
  \bE )[2] \ar[r]^\cong & F\left ( (\RQ^\sharp \bE)^{\leq
      m}\right)[2].}
\label{graded_funct}
\end{equation}
\end{rem}

\subsection{Recollection of Eisenbud operators} \label{Eops}

We recall from \cite[\S 1]{Ei80} the construction of the Eisenbud operators
for the commutative ring $R = Q/(\vf)$, where $\vf = (f_1, \dots,
f_c)$ is a  $Q$-regular sequence. Note that the ring $Q$ need not be regular for this construction.

Let $F$ be a complex of projective $R$-modules such that
there exists a sequence of maps of projective $Q$-modules 
\[
\widetilde{F} = \cdots \to \widetilde{F}^{p-1} \xra{\tilde{\partial}^{p-1}} \widetilde{F}^p \xra{\tilde{\partial}^p}
\widetilde{F}^{p+1} \to \cdots
\]
 with $F
\cong \widetilde{F} \otimes_Q R$. The sequence of maps $\widetilde{F}$ is {\em not}
required to be a chain complex. Since $\widetilde{F} \otimes_Q R \cong F$, for each $i = 1, \dots, c$ and
all $n$, there exist (non-unique) maps
$\tilde{t}_i^n: \widetilde{F}^{n} \to \widetilde{F}^{n+2}$ 
such that
\begin{equation}
\label{eis_ops_cond}
\tilde{\partial}^{n+1} \tilde{\partial}^{n} = \sum_{i = 1}^c
\tilde{t}_i^n f_i.
\end{equation}
Set $t_i^n =  \tilde{t}_i^n \otimes_Q R:
F^{n} \to F^{n+2}$. Then the following properties hold \cite[Section 1]{Ei80}:
\begin{enumerate}
\item for each
$i$, the maps $t_i^n$ 
assemble to
give morphisms of chain complexes,
$t_i: F \to F[2]$;
\item the chain map $t_i$ is independent of the choice of
$\widetilde{F}$ and the $\tilde{t}_i^n$'s, up to homotopy;
\item the $t_i$'s commute, up to homotopy;
\item the $t_i$'s are
natural, up to homotopy, in the argument $F$.
\end{enumerate}
It is assumed in \cite{Ei80} that $F$ is a complex of free
$R$-modules, in which case a lifting $\widetilde{F}$ always
exists: viewing the
differentials of $F$ as matrices, simply
lift each element in each matrix to an element of $Q$. However, if $F$ is only assumed to be a complex of projective
modules for which such a lifting exists, then the proofs
in \cite{Ei80} go through unchanged.

For any object $M$ in $\Db(R)$ we may choose a complex
of projective $R$-modules $F$, an isomorphism  $\alpha: F \map{\cong}
M \in \Db(R)$, and a lifting $\widetilde{F}$ of $F$ as above. For
example, one could take $F$ to be a bounded above complex of free
modules that maps via a quasi-isomorphism to $M$.
Define $t_i^M: M \to M[2]$ in $\Db(R)$
to be the composition of
$$
M \map{\alpha^{-1}} F \map{t_i} F[2] \map{\alpha[2]} M[2],
$$
where the map $t_i$ is induced from a choice of maps $\tilde{t_i^n}$
satisfying \eqref{eis_ops_cond}.

The facts listed above imply that $t_i^M$ does not depend on the
choice of $F$ or $t_i$, that $t_i^M \circ t_j^M = t_j^M \circ
t_i^M$ for all $i,j$ and, if $g: M \to N$ is a morphism in $\Db(R)$,
then
$$
\xymatrix{
M \ar[r]^{t_i^M}  \ar[d]^g & M[2] \ar[d]^{g[2]} \\
N \ar[r]^{t_i^N} & N[2] \\
}
$$
commutes. In other words, the collection of maps $t_i^M$ form a
family of pairwise commuting natural transformation from the identity functor to the functor
$(-)[2]$.
We call these the {\em Eisenbud operators on $\Db(R)$} given by
$(Q, \vf)$. 

It is clear that $t_i^M$ descends along the canonical functor
$\Db(R) \to \Dsg(R)$ to give natural
transformations on $\Dsg(R)$, and we will also write these
induced transformations as $t_i^M$. Moreover, these induced
transformations, in turn, restrict to give
natural transformations of endo-functors on the full, triangulated subcategory $\rDsg(Q
\onto R)$ of $\Dsg(R)$.

\begin{proof}[Proof of Theorem \ref{thm:Eisenbud}]
\label{proof_of_main_thm_sec3}
Let $\bE$ be an object of $[MF(\ps Q, \cO(1), W)]$. Let $m$ be an
integer which is sufficiently small for $\bE[2]$; by definition it
will also be sufficiently small for $\bE$. By \ref{lem:good} and \ref{prop:F}, 
\mbox{$ \zeta^m: \Gamma( \cU, \RQ^\sharp
  \bE ) \to F\left ( (\RQ^\sharp \bE)^{\leq m}\right)$} is an
isomorphism in $\Dsg(R)$ and $F\left ( (\RQ^\sharp \bE)^{\leq m}\right)$ is a complex of finitely generated
projective $R$-modules. Using the
  isomorphism of functors $\Psi( - ) \xra{\cong} \Gamma( \cU, \RQ^\sharp -)$
  of Proposition \ref{main_equiv_explicit}, we have a commutative
  diagram:
\[ \xymatrix{ \Psi( \bE ) \ar[r]^\cong \ar[d]_{\Psi(T_i)}& \Gamma( \cU, \RQ^\sharp
  \bE ) \ar[r]^\cong \ar[d]_{\Gamma(\cU, T_i)}& F\left ( (\RQ^\sharp
    \bE)^{\leq m}\right) \ar[d]^{F( \RQ^\sharp(T_i)^{\leq m})} \\
\Psi( \bE[2] ) \ar[r]^\cong \ar[d]_{\cong}& \Gamma\left ( \cU, \RQ^\sharp
  (\bE[2]) \right ) \ar[r]^\cong \ar[d]_{\cong}& F\left ( (\RQ^\sharp \bE[2])^{\leq m}\right) \ar[d]^{\cong} \\
\Psi( \bE )[2] \ar[r]^\cong & \Gamma( \cU, \RQ^\sharp
  \bE )[2] \ar[r]^\cong & F\left ( (\RQ^\sharp \bE)^{\leq m}\right)[2]}
\]
Indeed, the top left square commutes since it is given by a natural
transformations applied to the map $T_i$. The top right square
commutes by Proposition \ref{prop:F}. The lower left square commutes
since $\Psi( - ) \xra{\cong} \Gamma( \cU, \RQ^\sharp -)$ is an
isomorphism of triangulated functors, and the lower right square commutes by
\eqref{graded_funct}.

Since $F\left ( (\RQ^\sharp \bE)^{\leq m}\right)$ is a complex of projective
$R$-modules, we may calculate Eisenbud operators using it. Let us call
the right hand vertical map in the above diagram $\beta_i: F\left (
  (\RQ^\sharp \bE)^{\leq m}\right) \to F\left ( (\RQ^\sharp \bE)^{\leq
    m}\right)[2]$. To prove Theorem~\ref{thm:Eisenbud} it is enough to find
a lifting of $F\left ( (\RQ^\sharp \bE)^{\leq m}\right)$ to a complex
of projective $Q$-modules and degree $-2$ endomorphisms $\tilde{t}_i$
of the lifting such that
\ref{eis_ops_cond} holds, and such that $\tilde{t_i} \otimes_Q R \cong \beta_i$.

To lift $F\left ( (\RQ^\sharp \bE)^{\leq m}\right)$
to a sequence of projective $Q$-modules consider the sequence of maps of $Q$-modules defined in analogy with Definition
\ref{def:F}:
$$
\begin{aligned}
\tF(\bE) :=
& \big(
\cdots \to \rH^{c-1}(\ps Q, \cE_0(-2)) \to
\rH^{c-1}(\ps Q, \cE_1(-1)) \\
& \to
\rH^{c-1}(\ps Q, \cE_0(-1)) \to 0\big),
\end{aligned}
$$
with $\rH^{c-1}(\ps Q, \cE_0(-q))$ located in cohomological degree
$2c-2$. 
Just as for $F\left ( (\RQ^\sharp \bE)^{\leq m}\right)$, if 
$m \ll 0$, each coherent sheaf in $\tF( \bE)^{\leq m}$ 
is a finitely
generated projective $Q$-module (but this is not a chain complex). We claim there is an isomorphism of chain complexes 
$$
\tF( \bE)^{\leq m}
\otimes_Q R \cong F\left ( (\RQ^\sharp \bE)^{\leq m}\right)
$$ 
for all $m \ll 0$. Indeed, if $\cE$ is any coherent
sheaf on $\ps Q$, then by \cite[II.5.12]{MR0222093} we have the isomorphism $\R \pi_* \L \RQ^* \cE \cong R \otimes_Q^{\L}
\R p_* \cE$ in $\Db(R)$, where $p: \ps Q \to \Spec Q$ is the canonical map. Moreover,
if $\cE$ is locally free and $\rH^i(\ps Q, \cE) = 0$ for all $i \ne c-1$, then
this gives an isomorphism of $R$-modules
$$
\rH^{c-1}(\ps R, \RQ^* \cE) \cong
\rH^{c-1}(\ps Q, \cE) \otimes_Q R,
$$
and the claim follows.

Thus $\tF( \bE)^{\leq m}$ is a lifting; to see it is the lifting we
sought, note that $\beta_i$ is given in each degree by \[\rH^{c-1}(\ps R, \cE_j(k) )
\xra{\rH^{c-1}(\ps R, T_i)} \rH^{c-1}(\ps R, \cE_j(k+1)).\]
The composition of two successive maps in $\tF( \bE)^{\leq m}$ is the map on
sheaf cohomology induced by multiplication by  $\sum_{i=1}^c T_i f_i$.
Thus, letting $\tilde{t}_i = \rH^{c-1}(\ps Q, T_i)$ we see that
$\beta_i = \tilde{t}_i \otimes_Q R$ and the theorem follows.
\end{proof}

\section{Projective resolutions}
\label{sec_proj_res}
We continue to work in the context and under the assumptions of
\S \ref{assumptions}, with the added assumption that $Q$ is Gorenstein,
i.e.\ $Q$ has finite injective dimension over itself. Let $M$ be a finitely generated $R$-module that has
finite projective dimension over $Q$.
In this section we construct a projective
resolution over $R$ of a high
syzygy of $M$ using the equivalence $\Psi$ of Corollary
\ref{orlovcor} and the explicit representative of $\Psi(\bE)$ given in
Section \ref{operator_proof_section}.

Recall that a coherent
sheaf $\cF$ on $\ps R$ is
  \emph{$m$-regular} for an integer $m$ if \[\rH^i(\ps R, \cF(m-i) ) = 0 \text{ for all } i >
  0.\] The
  \emph{regularity} of $\cF$ is the smallest $m$ such that $\cF$ is
  $m$-regular; see e.g.\ \cite [Lecture 14]{MR0209285} and
  \cite[Section 8]{MR0338129}. By Serre's Vanishing Theorem, every
coherent sheaf on $\ps R$ has finite regularity.

\begin{defn}
For a matrix factorization $\bE = (\cE_1 \to
 \cE_0 \to \cE_1(1))$, we set 
\[\alpha( \bE ) = \max \{ \text{regularity } (\RQ^*\cE_0)^\vee - 1, \text{regularity
} (\RQ^* \cE_1)^\vee - 1\},\]
where $\RQ: \ps R \to \ps Q$ is the canonical inclusion, and $(-)^\vee =
\uHom( -,
\cO )$.
\end{defn}

\begin{thm}
\label{proj_res_thm}
 Let $R = Q/(f_1, \ldots, f_c)$, where $Q$ is a Gorenstein ring of
 finite Krull dimension and $f_1, \ldots, f_c$ is a $Q$-regular
 sequence. Let $M$ be a finitely generated $R$-module that has finite projective
 dimension over $Q$ and let $\bE = (\cE_1 \to \cE_0 \to \cE_1(1))$ be an
 object of $MF(\ps Q, \cO(1), W)$ such that $\Psi(\bE) \cong M$, where
 $\Psi$ is the equivalence of \ref{orlovcor}. Set
\[ n_{\bE} = 2\alpha(\bE) + e - 1,\] where $e$ is the is the Krull
  dimension of $Q$. The complex 
\[F(\RQ^\sharp \bE)^{\leq -n_\bE} = \cdots \to \rH^{c-1}(\ps R, \RQ^* \cE_1(m)) \to \rH^{c-1}(\ps R, \RQ^*
\cE_0(m)) \to \rH^{c-1}(\ps R, \RQ^* \cE_1(m+1)) \to \cdots
\]
 is a projective resolution of a $n_\bE$-th syzygy
of $M$, where $\RQ^\sharp \bE$ is defined
in \ref{def:flat} and $F(-)$ is defined in \ref{def:F}.
\end{thm}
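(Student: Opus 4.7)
The plan is to upgrade the $\Dsg(R)$-isomorphism $F((\RQ^\sharp\bE)^{\le m})\cong M$ of Proposition~\ref{prop:F} to an honest projective resolution of a specific syzygy of $M$. Under the cohomological shift by $c-1$ that $F$ introduces, the complex $F(\RQ^\sharp\bE)^{\le -n_\bE}$ displayed in the theorem equals $F((\RQ^\sharp\bE)^{\le m})$ for an appropriate truncation bound $m$.

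I would first verify that this $m$ is sufficiently small for $\bE$ in the sense of Definition~\ref{def_suff_small}. By Serre--Grothendieck duality on $\ps R\cong\bP^{c-1}_R$, exactly as in the proof of Lemma~\ref{lem:good}, the vanishing $\rH^i(\ps R,\RQ^*\cE_i(k))=0$ for $i\ne c-1$ together with the projectivity of the top cohomology is equivalent to $0$-regularity of the dual twist $\RQ^*\cE_i^\vee(-k-c)$. The bound $\operatorname{reg}(\RQ^*\cE_i^\vee)\le\alpha(\bE)+1$ then translates into an explicit upper bound on the twists $k$ occurring in the truncation, and the factor $2\alpha(\bE)$ in $n_\bE$ is calibrated exactly to this estimate. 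Lemma~\ref{lem:good} and Proposition~\ref{prop:F} then identify $F((\RQ^\sharp\bE)^{\le m})$ as a right-bounded complex of finitely generated projective $R$-modules, naturally quasi-isomorphic to $\R\pi_*((\RQ^\sharp\bE)^{\le m})$ in $\Db(R)$ and to $M$ in $\Dsg(R)$.

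The central calculation is then to show that the cohomology of this complex is concentrated in the top degree $-n_\bE$. By Remark~\ref{RQ_bdd_cohom}, the complex $\RQ^\sharp\bE$ is a locally free resolution of a single coherent sheaf $\cG$ placed in degree $c-1$, so the brutal truncation $(\RQ^\sharp\bE)^{\le m}$ is quasi-isomorphic to a single coherent sheaf $\cK_m:=\ker(d^{m+1})$ in degree $m$, itself a high syzygy of $\cG$. Comparing the two spectral sequences for the Cech hypercohomology of $(\RQ^\sharp\bE)^{\le m}$ --- the column-first collapses by sufficient smallness, and the row-first collapses because $\cK_m$ is concentrated in one degree --- yields the identification
\[
\rH^{i}\bigl(F((\RQ^\sharp\bE)^{\le m})\bigr)\;\cong\;\rH^{i-m}(\ps R,\cK_m),
\]
so the resolution property reduces to the vanishing $\rH^{j}(\ps R,\cK_m)=0$ for $j<c-1$.

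This vanishing is the main obstacle, and it is where the term $e-1$ in $n_\bE$ enters. I would establish it using the finite exact sequence
\[
0\to\cK_m\to\cP^{m+1}\to\cdots\to\cP^{c-1}\to\cG\to 0
\]
extracted from $\RQ^\sharp\bE$ together with iterated long exact sequences, bounding the regularity of $\cG$ and of the intermediate kernels in terms of the regularities of the $\cP^j$'s and of a $Q$-free resolution of $M$; since $Q$ is Gorenstein of Krull dimension $e$, the projective dimension of $M$ over $Q$ is at most $e$, and the $e-1$ shift in $n_\bE$ absorbs the resulting extra regularity contribution. Once this vanishing is in hand, $F(\RQ^\sharp\bE)^{\le -n_\bE}$ is a projective resolution of the single surviving cohomology module $\cN:=\rH^{c-1}(\ps R,\cK_m)$. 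The $\Dsg(R)$-isomorphism of this complex with $M$, together with the cohomological position of $\cN$, forces $\cN\cong M[-n_\bE]\cong\Omega^{n_\bE}_R(M)$ in $\Dsg(R)$; hence $\cN$ is an $n_\bE$-th syzygy of $M$ up to projective summands, completing the proof.
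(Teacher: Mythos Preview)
Your overall outline is right, but there are two genuine gaps, and both concern places where the paper's argument is different from (and simpler than) what you propose.

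\textbf{The vanishing step.} You correctly reduce to showing $\rH^{j}(\ps R,\cK_m)=0$ for $j<c-1$, but your proposed route via bounding the regularity of $\cG$ and of intermediate kernels, using a $Q$-resolution of $M$, is both vague and aimed at the wrong kind of vanishing: regularity controls \emph{upper} sheaf cohomology of twists, whereas what you need is vanishing of \emph{lower} cohomology of the untwisted $\cK_m$. The paper does something much simpler. In Proposition~\ref{bdd_cohom} it uses only that $\R\pi_*$ on $\ps R=\bP^{c-1}_R$ has cohomological amplitude $c-1$: since $(\RQ^\sharp\bE)^{\le n}$ with $n=-2\alpha(\bE)-c+1$ is quasi-isomorphic to a single sheaf in degree $n$, the complex $F((\RQ^\sharp\bE)^{\le n})\cong\R\pi_*(\cN)[-n]$ has cohomology only in $[n,n+c-1]$. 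Hence $F(\RQ^\sharp\bE)$ is already exact below degree $n$, and any further brutal truncation $F(\RQ^\sharp\bE)^{\le k}$ for $k\le n$ is automatically a projective resolution of its top cohomology. No regularity estimate on $\cG$ or on a $Q$-resolution of $M$ is needed.

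\textbf{The role of $e-1$ and the final identification.} You attribute the term $e-1$ in $n_\bE$ to an ``extra regularity contribution'' coming from $\pd_Q M\le e$. That is not its role. Writing $e-1=(c-1)+d$ with $d=\dim R$, the $c-1$ is the cohomological amplitude just discussed, and the additional $d$ steps of truncation are there solely to force the module $N=\rH^{-n_\bE}(F(\RQ^\sharp\bE)^{\le -n_\bE})$ to be maximal Cohen--Macaulay (as a $d$-th syzygy over a Cohen--Macaulay ring of dimension $d$). This MCM property is essential for your last sentence, which as written is a gap: an isomorphism $N\cong\Omega^{n_\bE}_R(M)$ in $\Dsg(R)$ does \emph{not} in general imply that $N$ is a syzygy of $M$ up to projective summands. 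The paper closes this gap by invoking Buchweitz's equivalence $\Dsg(R)\simeq\underline{\mathrm{MCM}}(R)$ for Gorenstein $R$: since both $N$ and the honest syzygy $M'=\Omega^{n_\bE}_R(M)$ are MCM (the latter because $n_\bE\ge d$), their isomorphism in $\Dsg(R)$ yields $M'\oplus Q_1\cong N\oplus Q_2$ with $Q_1,Q_2$ projective, and one then glues $F(\RQ^\sharp\bE)^{\le -n_\bE}$ to the tail of a projective resolution of $M$.
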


\begin{rem}
 \label{eis_conj} On this resolution one may choose the Eisenbud
 operators
\[ t_i : \rH^{c-1}(\ps R, \RQ^* \cE_i(n)) \to \rH^{c-1}(\ps R, \RQ^*
\cE_i(n+1)) \]
to be multiplication by $T_i \in \Gamma( \ps Q,
 \cO_{\ps Q}(1))$. In particular these operators commute. 
\end{rem}

We need several preliminary results for the proof of Theorem \ref{proj_res_thm}.

\begin{lem}
\label{suff_small_terms_of_regularity}
The integer $-2\alpha(\bE)  - c + 1$ is
sufficiently small for the matrix factorization $\bE$, in the sense of Definition \ref{def_suff_small}.
\end{lem}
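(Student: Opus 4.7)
The plan is to follow the template of the proof of Lemma~\ref{lem:good}, making the implicit threshold on $m$ explicit in terms of the regularities packaged into $\alpha(\bE)$. First I would unwind Definition~\ref{def:flat} to observe that $\RQ^*\cE_0(-j)$ sits in cohomological degree $c+1-2j$ and $\RQ^*\cE_1(-j)$ sits in cohomological degree $c-2j$; requiring each such degree to be $\leq m = -2\alpha(\bE)-c+1$ yields in both cases $j \geq \alpha(\bE)+c$. Thus every coherent sheaf $\cE$ occurring in $\RQ^\sharp(\bE)^{\leq m}$ has the form $\RQ^*\cE_i(-j)$ with $i \in \{0,1\}$ and $j \geq \alpha(\bE)+c$, and the task reduces to verifying the two conditions of Definition~\ref{def_suff_small} for each such $\cE$.

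For this verification I would reuse the Serre--Grothendieck duality argument from Lemma~\ref{lem:good}: since $Q$ is Gorenstein, so is $R = Q/(\vf)$, and the projective morphism $\pi\colon \ps R \to \Spec R$ is relatively Gorenstein with dualizing sheaf $\cO(-c)$. The key identity from that lemma is the natural isomorphism
\[
\rHom_R(\rH^q(\ps R, \cE(-c)^\vee), R) \cong \rH^{c-1-q}(\ps R, \cE),
\]
so both conditions will follow once we know that $\rH^q(\ps R, (\RQ^*\cE_i)^\vee(j+c)) = 0$ for all $q \geq 1$: that vanishing forces $\rH^{c-1-q}(\ps R, \cE) = 0$ for $q \geq 1$ via duality, and by \cite[7.9.10]{MR0163911} the $R$-module $\rH^0(\ps R, (\RQ^*\cE_i)^\vee(j+c))$ is finitely generated projective, whose $R$-dual, again via duality, is $\rH^{c-1}(\ps R, \cE)$.

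Finally I would invoke Castelnuovo--Mumford regularity: by definition of $\alpha(\bE)$, both $(\RQ^*\cE_0)^\vee$ and $(\RQ^*\cE_1)^\vee$ have regularity at most $\alpha(\bE)+1$, and hence by Mumford's lemma $\rH^q(\ps R, (\RQ^*\cE_i)^\vee(k)) = 0$ for all $q \geq 1$ and all $k \geq \alpha(\bE)$. Since $j+c \geq \alpha(\bE)+2c \geq \alpha(\bE)$, the required vanishing holds uniformly over all sheaves appearing in the truncation, completing the proof. The only real obstacle is the bookkeeping in the first paragraph---checking that the specific threshold $m = -2\alpha(\bE)-c+1$ really does force $j \geq \alpha(\bE)+c$ for both families of sheaves present---after which the argument reduces entirely to the pattern already established in Lemma~\ref{lem:good}.
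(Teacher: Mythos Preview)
Your approach mirrors the paper's proof exactly: identify the twists appearing in the truncation, then feed them through the Serre--Grothendieck duality argument from Lemma~\ref{lem:good}, with Castelnuovo--Mumford regularity supplying the needed vanishing. The bookkeeping in your first paragraph is correct: the sheaves in $(\RQ^\sharp\bE)^{\leq -2\alpha(\bE)-c+1}$ are precisely the $\RQ^*\cE_i(-j)$ with $j \geq \alpha(\bE)+c$, just as in the paper.

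There is, however, a sign slip in your duality step. For $\cE = \RQ^*\cE_i(-j)$, the sheaf ``$\cE(-c)^\vee$'' appearing in the identity you quote from Lemma~\ref{lem:good} must be read as $\cE^\vee(-c) = (\RQ^*\cE_i)^\vee(j-c)$, not as $(\cE(-c))^\vee = (\RQ^*\cE_i)^\vee(j+c)$: the next step in that chain of isomorphisms, $\rExt^{c-1-i}(\cE(-c)^\vee,\cO(-c)) \cong \rExt^{c-1-i}(\cO,\cE)$, requires $\uHom(\cE(-c)^\vee,\cO(-c)) \cong \cE$, which forces the former reading. With your reading the formula is simply false (take $\cE=\cO$), so the implication you draw would not follow. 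The paper's own proof of the present lemma avoids the ambiguity by writing the dual sheaf explicitly as $(\RQ^*\cE_j)^\vee(k)$ and recording the duality as $\rHom_R(\rH^0(\ps R,(\RQ^*\cE_j)^\vee(k)),R) \cong \rH^{c-1}(\ps R,\RQ^*\cE_j(-k-c))$. Fortunately the correction is harmless for your argument: what you actually need is $\rH^q(\ps R,(\RQ^*\cE_i)^\vee(j-c)) = 0$ for $q \geq 1$, and since $j-c \geq \alpha(\bE)$ this follows from the regularity bound exactly as you say. With this twist corrected, your proof coincides with the paper's.
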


\begin{proof}
By \cite[8.1.3]{MR0338129}, if a sheaf $\cF$ on $\ps R$ is $k$-regular
then it is also $(k+1)$-regular. Thus the sheaves $(\RQ^*\cE_0)^\vee$ and
$(\RQ^*\cE_1)^\vee$ are $k$-regular for all $k > \alpha(\bE)$. In
particular 
\[\rH^i( \ps R, (\RQ^*\cE_0)^\vee(k) ) = 0 = \rH^i(
\ps R, (\RQ^*\cE_1)^\vee(k) ) \text{ for all } i > 0 \text{ and all } k \geq
\alpha(\bE).\] As in the proof of \ref{lem:good}, this implies that for
$j = 0,1$ and all $k \geq \alpha(\bE)$:
\begin{enumerate}
\item the $R$-module $\rH^0( \ps R, (\RQ^*\cE_j)^\vee(k))$ is projective;
\item there is an isomorphism \[  \Hom R {\rH^0( \ps R,
  (\RQ^*\cE_j)^\vee(k))} R\cong \rH^{c-1}( \ps R,
  (\RQ^*\cE_j)(-k - c)),\] and
  so in particular $\rH^{c-1}( \ps R,
  (\RQ^*\cE_j)(-k - c))$ is projective;
\item $\rH^i( \ps R,
  (\RQ^*\cE_j)(-k - c)) = 0$ for all $i < c - 1$.
\end{enumerate}
Now consider the complex $$
\RQ^\sharp \bE := \left( 
\cdots \to \RQ^*\cE_0(-2) \to \RQ^*\cE_1(-1) \to
\RQ^*\cE_0(-1) \to 0\right)
$$
where $\RQ^*\cE_0(-n)$ is in cohomological degree $c - 2n + 1$.
We have
$$
\begin{aligned}  (\RQ^\sharp \bE)^{\leq -2\alpha(\bE) - c +1} := ( 
\cdots &\to \RQ^*\cE_0(-(\alpha(\bE)+1) - c) \to \\
\RQ^*\cE_1(-\alpha(\bE)-c) &\to
\RQ^*\cE_0(-\alpha(\bE) - c) \to 0) \end{aligned} 
$$
and by the above each sheaf appearing in this complex satisfies
the two conditions of Definition \ref{def_suff_small}.
\end{proof}

\begin{prop}
\label{bdd_cohom}
  The cohomology of the complex $F(\RQ^\sharp \bE)^{\leq
    -2\alpha(\bE)}$ is concentrated in
  degrees $-2 \alpha(\bE) - c + 1 \leq i \leq -2\alpha(\bE)$.
\end{prop}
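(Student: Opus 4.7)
Set $\alpha = \alpha(\bE)$ and $m_0 := -2\alpha - c + 1$, so that $m_0 + c - 1 = -2\alpha$. The plan is to identify the complex $F(\RQ^\sharp \bE)^{\leq -2\alpha}$, up to quasi-isomorphism, with $\R\pi_*$ applied to an object on $\ps R$ whose cohomology sheaves are concentrated in a single degree, and then to invoke the standard cohomological amplitude bound for $\R\pi_*$.

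The first step would be a routine unwinding of the shift by $[-c+1]$ built into the definition of $F$ to obtain the degree-by-degree identification
\[
F(\RQ^\sharp \bE)^{\leq -2\alpha} \;=\; F\bigl((\RQ^\sharp \bE)^{\leq m_0}\bigr)
\]
as complexes of $R$-modules. Next, Lemma~\ref{suff_small_terms_of_regularity} tells us $m_0$ is sufficiently small for $\bE$, so Lemma~\ref{lem:good} provides a quasi-isomorphism
\[
\Gamma\bigl(\cU, (\RQ^\sharp \bE)^{\leq m_0}\bigr) \;\xrightarrow{\simeq}\; F\bigl((\RQ^\sharp \bE)^{\leq m_0}\bigr).
\]
Since $\Gamma(\cU, -)$ computes $\R\pi_*$ for $\pi\colon \ps R \to \Spec R$, these steps together give an isomorphism $F(\RQ^\sharp \bE)^{\leq -2\alpha} \simeq \R\pi_*\bigl((\RQ^\sharp \bE)^{\leq m_0}\bigr)$ in $\Db(R)$.

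The main step would be to show that $(\RQ^\sharp \bE)^{\leq m_0}$, viewed in $\Db(\ps R)$, is quasi-isomorphic to a single coherent sheaf $\cK := \cH^{m_0}\bigl((\RQ^\sharp \bE)^{\leq m_0}\bigr)$ placed in cohomological degree $m_0$. Remark~\ref{RQ_bdd_cohom} asserts that $\RQ^\sharp \bE$ has cohomology concentrated in degree $c-1$, and hence is exact in every other degree. Using the inequality $m_0 \leq c-1$, one then checks that $\cH^i\bigl((\RQ^\sharp \bE)^{\leq m_0}\bigr) = \cH^i(\RQ^\sharp \bE) = 0$ for $i < m_0$ (since $i < m_0 \leq c-1$ forces $i \neq c-1$), while the cohomology of the truncation obviously vanishes for $i > m_0$.

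The conclusion then follows from the standard cohomological amplitude bound $[0, c-1]$ for $\R\pi_*$ on coherent sheaves of $\ps R$: the complex $\R\pi_*\bigl((\RQ^\sharp \bE)^{\leq m_0}\bigr) \simeq (\R\pi_* \cK)[-m_0]$ has cohomology concentrated in the range $[m_0, m_0 + c - 1] = [-2\alpha - c + 1, -2\alpha]$. The main subtle point I anticipate is verifying the single-degree concentration of the truncation's cohomology in the third step; the remaining ingredients are shift bookkeeping plus the amplitude bound for $\R\pi_*$.
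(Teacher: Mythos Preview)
Your proposal is correct and follows essentially the same approach as the paper's proof: both set $n = m_0 = -2\alpha(\bE) - c + 1$, use the identity $F(\RQ^\sharp \bE)^{\leq -2\alpha} = F((\RQ^\sharp \bE)^{\leq m_0})$, invoke Lemmas~\ref{suff_small_terms_of_regularity} and~\ref{lem:good} to pass to $\Gamma(\cU, -) \cong \R\pi_*$, use Remark~\ref{RQ_bdd_cohom} to see the truncated complex has cohomology concentrated in degree $m_0$, and finish with the cohomological amplitude $c-1$ of $\R\pi_*$. Your explicit verification that $m_0 \leq c-1$ (so that the truncation really has cohomology only in degree $m_0$) is a detail the paper leaves implicit.
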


\begin{proof} {}From the definition of $F$ we have that $F(\RQ^\sharp \bE)^{\leq
  -2\alpha(\bE)}  = F\left ( (\RQ^\sharp \bE)^{\leq -2\alpha(\bE) - c +
    1}\right )$. Set $n = -2\alpha(\bE) - c + 1$.
By Lemma \ref{suff_small_terms_of_regularity}, $n$ is
sufficiently small for $\bE$ and thus by Lemma~\ref{lem:good} there is
an isomorphism in $\Db(R)$:
\[ 
F\left ( (\RQ^\sharp
  \bE)^{\leq n} \right )\cong \Gamma\left (\cU, (\RQ^\sharp \bE)^{\leq n} \right ).
\]
By Remark \ref{RQ_bdd_cohom} the complex of sheaves $\RQ^\sharp \bE^{\leq n}$ only
has cohomology in degree $n$. Setting $\cN = \rH^n( \RQ^\sharp \bE^{\leq
  n} )$ gives an isomorphism in $\Db(Y)$:
\[ \RQ^\sharp \bE^{\leq n} \cong \cN[-n]. \]
Since the functor $\Gamma( \cU, - )$
represents $\R \pi_*$, we have the following isomorphisms in $\Db(R)$:
\[ \Gamma\left (\cU, \RQ^\sharp (\bE)^{\leq n}\right ) \cong  \R \pi_*
\left (\RQ^\sharp (\bE)^{\leq n}\right )  \cong \R \pi_* (\cN[-n] ) \cong
\R \pi_* (\cN)[-n]. \]
Since $\R \pi_*$ has cohomological dimension $c-1$, we see that $\R
\pi_* (\cN)$ has cohomology in degrees at most $0 \leq i \leq c
-1$. Combining the above gives
\begin{align*} F(\RQ^\sharp \bE)^{\leq
  -2\alpha(\bE)}  &=  F\left (\RQ^\sharp (\bE)^{\leq n}\right ) \\
&\cong \Gamma\left (\cU, \RQ^\sharp (\bE)^{\leq n}\right ) \\ & \cong \R \pi_* (\cN )[-n]\end{align*} has
cohomology in degrees at most $n \leq i \leq n + c - 1$.
\end{proof}

\begin{proof}[Proof of Theorem \ref{proj_res_thm}]
Let $M$ be a finitely generated $R$-module with finite projective
dimension over $Q$, and let $\bE$ a matrix factorization such that
$\Psi( \bE ) \cong M \in \Dsg(R)$. Since $-2\alpha(\bE) - c + 1 $ is
sufficiently small for $\bE$ by \ref{suff_small_terms_of_regularity},
there is, by \ref{main_equiv_explicit} and \ref{prop:F}, an isomorphism in $\Dsg(R)$ 
\[ \Psi( \bE ) \xra{\cong} F\left (
\RQ^\sharp (\bE)^{\leq -2\alpha(\bE) - c + 1}\right ) = F(\RQ^\sharp \bE)^{\leq
  -2\alpha(\bE)}. \]
Thus we have that
\[ M \cong F(\RQ^\sharp \bE)^{\leq -2\alpha(\bE)} \in \Dsg(R). \]
By Proposition \ref{bdd_cohom}, the complex $F(\RQ^\sharp \bE)^{\leq
  -2\alpha(\bE) - c + 1}$ only has cohomology in degree $ -2\alpha(\bE) - c + 1$. Moreover, since $F(\RQ^\sharp \bE)$ is a complex of projective
modules, the cone of the canonical
map \begin{equation}\label{trunc_map} F(\RQ^\sharp \bE)^{\leq
    -2\alpha(\bE)} \to F(\RQ^\sharp \bE)^{\leq
  -2\alpha(\bE) - c + 1} \end{equation}
is perfect, and hence
\eqref{trunc_map} is an isomorphism in $\Dsg(R)$. Finally, since $R$
is Cohen-Macaulay, we may truncate $F(\RQ^\sharp \bE)^{\leq
  -2\alpha(\bE) - c + 1}$ to the left $d = \dim R$ more steps so that the module \[N = \rH^{
  -2\alpha(\bE) - c - d+ 1}(F(\RQ^\sharp \bE)^{\leq
  -2\alpha(\bE) - c - d+ 1})\] is maximal Cohen-Macaulay (MCM). Note also
that $c + d$, where $c$ is the length of the regular sequence defining
$R$, is exactly the Krull dimension of $Q$, which we
called $e$. Thus $-2\alpha(\bE) - c -d + 1 = -n_\bE$ which was defined above.

We now have
an isomorphism
\[ M \cong N[-n_\bE] \in \Dsg(R). \]
Let $P = \cdots \to P^{-m} \xra{\partial^{-m}_P} P^{m+1}
 \to \cdots \to P^{-1} \xra{\partial^{-1}_P} P^0$ be a
 projective resolution of $M$ over $R$. By induction on $n$ one checks that there are isomorphisms for all $m \geq 0$:
\[ M \cong (\coker\partial^{-m}_P )[m-1] \in \Dsg(R). \]
Setting $M' = \coker \partial^{-n_{\bE}-1}_P$, which is MCM
since $n_{\bE} \geq d$, we have that
\[ M' \cong N \in \Dsg(R). \]
By \cite[4.4.1]{Bu87} this implies that $M' \cong N$ in the
stable category of MCM modules; i.e.\ there exist projective $R$-modules $Q_1, Q_2$ and an isomorphism
of $R$-modules
\[ M' \oplus Q_1 \cong N \oplus Q_2. \]
This allows us to glue the acyclic complexes
\[ \cdots \to F(\RQ^\sharp \bE)^{-n_{\bE} - 1} \to F(\RQ^\sharp \bE)^{-n_{\bE}} \to N \to 0 \]
\[ 0 \to M' \to P^{-n_\bE + 1} \to P^{-n_\bE + 2} \to \cdots \to
P^{-1} \to P^0 \to M \to 0\]
to give a projective resolution of $M$.
\end{proof}

\section{Graded matrix factorizations and a description of $\Psi^{-1}$ on objects} \label{sec:inverse}
We work in the context and under the assumptions of
\S \ref{assumptions}. Let $M$ be a finitely generated $R$-module
that has finite projective dimension
over $Q$. Our goal in this section is to give an explicit method of
constructing a matrix
factorization 
$\E_M$ such that $\Psi(\E_M) \cong M$ holds in $\Dsg(R)$, where $\Psi$
is the equivalence of \ref{orlovcor}.
We achieve this using the
data of a projective resolution of $M$ over $Q$ and a system of ``higher
homotopies'' on this resolution, as introduced by Eisenbud in \cite{Ei80}. Using the
explicit construction, we also show that $\rDsg(Q \onto R)$ may be
described using \emph{graded matrix factorizations}.

\subsection{Standard resolutions}
Let $G = 0 \to G_n \to \cdots \to G_0 \to 0$
 be a resolution of $M$ by finitely generated projective $Q$-modules.
 
For an element $J = (a_1, \ldots, a_c) \in \N^c$, we set $|J| =
  \sum_1^c a_i$. We write the element $(0, \ldots, 1, \ldots, 0)$,
  where the $1$ in the $i$-th position, as $i$.
By \cite[7.1]{Ei80} there exists a
  family of endomorphisms of $G$
  \[\boldsymbol{\sigma} = \brc{ \sigma^J}{J \in \N^c},\]
  where $\sigma^J \in \boldsymbol{\sigma}$ has degree $2|J| - 1$
  (i.e.\ the components are $\sigma^J_j: G_j \to G_{j + 2|J| - 1}$),
  that satisfy the following equations:
  \begin{align*}
    \sigma^0 &= \partial_G \\
    \sigma^0 \sigma^i + \sigma^i \sigma^0 &= f_i 1_G \\
    \sum_{J' + J'' = J} \sigma^{J'} \sigma^{J''} &= 0 \text{ for all }
    J \in \N^c \text{ with } |J| \geq 2.
  \end{align*}
  Such a family $\boldsymbol{\sigma}$ is called a \emph{system of
    higher homotopies}.

  \begin{rem}
   The result above is stated for complexes of free modules in
   \cite{Ei80} but the proof works under the weaker assumption that
   the modules are projective.
  \end{rem}

\begin{ex}
Let $K = K \langle e_1, \ldots, e_c \, | \, \partial( e_i) = f_i\rangle$ be the Koszul
complex resolving $R$ over $Q$. Then $K$ has the structure of a
differential graded (DG)-algebra
with multiplication given by the alternating product.
For an $R$-module $M$ with finite projective dimension over $Q$,
we may find a finite projective resolution $G$ of $M$
over $Q$ such  that $G$ is a DG-module over $K$
(and such that the $Q$-module structure coincides with the restriction along
the canonical map $Q \to K$). See \cite[Section 2]{MR1774757} for an
explicit construction of such a $G$.
Setting
$\sigma^0 = \partial^G$, $\sigma^i$ to be multiplication by $e_i$ for $i = 1, \ldots, c$,
and $\sigma^J = 0$ whenever  $|J| \geq 2$
 gives a system of higher homotopies on $G$. While such a system of higher homotopies is simpler than the general
type, not every $Q$-projective resolution of $M$ has the structure of a DG-module over $K$; see \cite{MR601460}.
\end{ex}

Define $D = \bigoplus_{j \leq 0} D^j = \bigoplus_{j \leq 0} \rHom_R(R[\chi_1, \dots,
\chi_c]_{-j},R)$ to be the graded dual of $R[\chi_1, \dots, \chi_c]$,
where $|\chi_i| = 2$, which we regard as a graded $R[\chi_1, \dots, \chi_c]$-module in the obvious way.
(In fact $D$ is a divided power algebra,
but we ignore the multiplication rule and regard it only as a graded
$R[\chi_1, \dots, \chi_c]$-module.) 
Consider the graded $R$-module
$$
\overline{G} \otimes_R D,
$$
where $G$ is the projective $Q$-resolution of $M$ above, and $\overline{G} = G \otimes_Q R$. 
 Note that $\overline{G}_i \otimes_R D^{2j}$
lies in cohomological degree $2j-i$.
Let $\boldsymbol{\sigma}$ be a system of higher homotopies on $G$, and
define 
$\partial = \sum_{J \in \N^c} \sigma^J \otimes \chi^J$, where $\chi^J =
  \chi_1^{a_1}\ldots \chi_c^{a_c}$ for $J = (a_1, \ldots, a_c) \in
  \N^c$, a degree -1 endomorphism of $\overline{G} \otimes_R D$.
When $R$ is local, \cite[Theorem 7.2]{Ei80} shows that this complex 
is an $R$-free resolution of $M$. By localizing, we see that, in general,
$\overline{G} \otimes_R D$ is a projective $R$-resolution of
$M$. This is a \emph{standard resolution of $M$} which we write as
$G \{ \boldsymbol{\sigma} \}$.

\subsection{Graded matrix factorizations}
Recall that  $S$ is
  the graded $Q$-algebra $S := Q[T_1, \dots,
  T_c]$, with $|T_i| = 1$, and $W
  = f_1T_1 + \cdots f_c T_c \in S$.
\begin{defn}
 A \emph{graded matrix factorization} of $W$ is a pair of graded
  free $S$-modules $E_1, E_0$ and maps
  \[ E_1 \xra{g_1} E_0 \xra{g_0(1)} E_1(1) \] such that $g_0 \circ
  g_1$ and $g_1(1) \circ g_0$ are multiplication by $W$. Exactly
  analagously to $MF(X, \cL, W)$ there is a category $MF^{gr}(S,W)$
  whose morphisms are pairs of maps that cause the obvious squares to
  commute. One defines homotopies between these maps and the homotopy
  category $[MF^{gr}(S,W)]$ analagously to $[MF(X, \cL, W)]_\naive$. By \cite[3.4]{MR2641200}, $[MF^{gr}( S, W)]$ is triangulated with triangles and
  shift functor defined analogously to $[MF(X,\cL,W)]_\naive$.
\end{defn}

\newcommand{\flc}{\mathcal{C}}
\newcommand{\p}{\mathfrak{p}}
We define the
full subcategory $\flc$ of $[MF^{gr}( S, W)]$ to have objects those
graded matrix factorizations $(E_1 \xra{e_1} E_0 \xra{e_0} E_1(1))$
such that $((\coker e_1)_\p)_0$ is a free $(S_\p)_0$-module for all $p \in
\Proj S$. Note that $\flc$ is a thick
subcategory.

There is a functor $\widetilde{(-)}:
[MF^{gr}( S, W)] \to [MF(\ps Q, \cO(1), W)]_\naive$ which sends a matrix
factorization $E_1 \to E_0 \to E_1(1)$ to $\widetilde{E_1} \to
\widetilde{E_0} \to \widetilde{E_1(1)}$. 

\begin{prop}
\label{graded_equiv}
The functor $\widetilde{(-)}:
[MF^{gr}( S, W)] \to [MF(\ps Q, \cO(1), W)]_\naive$ induces an
equivalence
\[ \widetilde{(-)}:
[MF^{gr}( S, W)]/\flc \to [MF(\ps Q, \cO(1), W)].
\]
\end{prop}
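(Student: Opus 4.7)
The plan is to treat Proposition \ref{graded_equiv} as a matrix-factorization analog of Serre's classical equivalence between finitely generated graded $S$-modules modulo torsion and coherent sheaves on $\Proj S = \ps Q$, with $\flc$ playing the role of the torsion subcategory. The proof splits into three steps: checking that $\widetilde{(-)}$ kills $\flc$, verifying essential surjectivity on objects, and verifying full faithfulness.

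For the first step, suppose $E = (E_1 \xra{e_1} E_0 \to E_1(1))$ lies in $\flc$. For any $\p \in \Proj S$ and any $T_i \notin \p$, the element $W/T_i \in (S_\p)_0$ annihilates the $(S_\p)_0$-module $((\coker e_1)_\p)_0$, because $W \cdot \coker e_1 = 0$ as a graded action. Since $W$ is a regular section of $\cO(1)$, the element $W/T_i$ is either a unit in $(S_\p)_0$ (when $W \notin \p$) or a non-zero-divisor (when $W \in \p$). In either case, freeness forces $((\coker e_1)_\p)_0 = 0$, so the sheaf $\widetilde{\coker e_1}$ vanishes and $\widetilde{e_1}$ is an isomorphism. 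Thus $\widetilde{E}$ is a trivial matrix factorization, which is strictly (hence locally) contractible, and $\widetilde{(-)}$ descends to a triangulated functor $[MF^{gr}(S,W)]/\flc \to [MF(\ps Q, \cO(1), W)]$.

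For essential surjectivity, given $\cE = (\cE_1 \to \cE_0 \to \cE_1(1))$ in $MF(\ps Q, \cO(1), W)$, I would invoke Serre's theorem to choose finitely generated graded free $S$-modules $E_i$ with $\widetilde{E_i} \cong \cE_i$, then lift the sheaf maps to graded $S$-maps $e_1, e_0$. The relations $e_0 e_1 - W\cdot 1$ and $e_1(1)e_0 - W\cdot 1$ sheafify to zero, so they factor through the maximal torsion submodules of $E_0$ and $E_1(1)$, which are themselves objects of $\flc$. A cone-type modification---either enlarging the $E_i$ by the necessary $\flc$-summands or replacing $E$ by a quasi-isomorphic representative---then yields a genuine graded matrix factorization whose sheafification represents $\cE$ in the quotient. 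For full faithfulness, I would compare mapping complexes: the graded Hom-complex $\rHom_S^{gr}(E,F)$ defined in analogy with $\HomMF$ sheafifies to $\HomMF(\widetilde{E}, \widetilde{F})$, and by \cite[Theorem 4.2]{BW11a} morphisms in $[MF(\ps Q, \cO(1), W)]$ are computed by $\bH^0$ of its Cech version; Serre's vanishing applied twist-by-twist identifies the global-sections calculation with the graded Hom after inverting $\flc$-cones.

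The main obstacle will be the essential surjectivity step---specifically, arranging the lifts so that the two matrix factorization relations, which a priori hold only modulo torsion, can be corrected to hold on the nose (or at least to agree with $\cE$ after passage to $[MF^{gr}(S,W)]/\flc$) in a way compatible with the triangulated structure. Once that construction is carried out carefully, the fully faithful step and Verdier's criterion will combine with step one (which pins down the kernel of $\widetilde{(-)}$ as exactly $\flc$) to deliver the equivalence.
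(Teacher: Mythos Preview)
Your Step 1 is fine, but the overall route differs from the paper's and your essential-surjectivity step has a real gap.

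The paper avoids lifting objects and maps by hand altogether. It observes that $\Gamma_*(-) = \bigoplus_{n} \Gamma(\ps Q, -(n))$, applied componentwise, gives a fully faithful right adjoint to $\widetilde{(-)}$ already on the naive homotopy categories; one then checks that this adjoint pair carries $\flc$ to the locally contractible objects and back, so it descends to an adjunction on the Verdier quotients, and finishes with the formal fact (as in the proof of \cite[2.1]{MR2437083}) that a left adjoint with zero kernel whose right adjoint is fully faithful is an equivalence. No lifting of maps and no correction of relations modulo torsion is ever needed.

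Your direct approach breaks at the first move of Step 2: Serre's theorem does \emph{not} let you ``choose finitely generated graded free $S$-modules $E_i$ with $\widetilde{E_i} \cong \cE_i$'' in general. A locally free coherent sheaf on $\ps Q = \bP^{c-1}_Q$ need not be a direct sum of line bundles once $c \ge 3$ (e.g.\ the tangent bundle), and only direct sums of line bundles arise as sheafifications of graded \emph{free} $S$-modules. To salvage the argument you would first have to show that every object of $[MF(\ps Q, \cO(1), W)]$ is isomorphic to one whose components are sums of line bundles---which is true, but requires something like the explicit factorizations $\bE(M,G,\bs{\sigma})$ of \S\ref{const_mf} together with the equivalence $\Psi$ of Corollary~\ref{orlovcor}---and even then the relation-correction problem you yourself flag as the ``main obstacle'' remains entirely unresolved. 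The paper's adjunction argument sidesteps both difficulties.
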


\begin{proof}
We first note that $\widetilde{(-)}:
[MF^{gr}( S, W)] \to [MF(\ps Q, \cO(1), W)]_\naive$ has a fully
faithful right
adjoint
\[ \Gamma_*:  [MF(\ps Q, \cO(1), W)]_\naive \to [MF^{gr}( S, W)]\]
that sends $(\cE_1 \to \cE_0 \to \cE_1(1))$ to $(\Gamma_*( \cE_1) \to
\Gamma_*( \cE_0) \to \Gamma_*( \cE_1(1))$. Here, for a coherent sheaf $\cE$,
we set $\Gamma_*(\cE ) := \bigoplus_{n \in \Z} \Gamma( \ps Q,
\cE(n) )$.

Recall from \ref{our_equiv} that \[[MF(\ps Q, \cO(1), W)] \cong
\frac{[MF(\ps Q, \cO(1), W)]_\naive}{\text{locally
    contractible objects}}.\] One checks that the adjoint pair of
functors restricts to an adjoint pair on $\flc$ and the subcategory of
${\text{locally
    contractible objects}}$, and thus induces an adjoint pair of
functors between $[MF^{gr}( S, W)]/\flc$ and  $[MF(\ps Q, \cO(1),
W)]$. It follows that the induced functor 
\[\Gamma_*: [MF(\ps Q, \cO(1),
W)] \to [MF^{gr}( S, W)]/\flc\] is also fully faithful, and the kernel
of $\widetilde{(-)}:
[MF^{gr}( S, W)]/\flc \to [MF(\ps Q, \cO(1), W)]$ is zero. The result now follows formally;
see e.g.\ the proof of 2.1 in \cite{MR2437083}.
\end{proof}

\subsection{Construction of a matrix factorization}
\label{const_mf}
As before $S$ is
  the graded $Q$-algebra $Q[T_1, \dots,
  T_c]$, and $|T_i| = 1$. Let $S(j)$ denote
  the graded free $S$-module with $S(j)_i = S_{i+j}$, and for a graded
  $S$-module $E$ we
set $E(j) = E \otimes_S
  S(j)$.

Given a finitely generated $R$-module $M$ with finite projective
dimension over $Q$, pick a finite projective resolution $G$ of $M$ as a
$Q$-module, and a system of higher homotopies $\bs{\sigma} = \{\sigma^J\}$.
We define the finitely generated graded projective $S$-modules $E_1$
and $E_0$ as follows:
$$
E_1 := \bigoplus_{j\geq 0} G_{2j+1} \otimes_Q S(j) \text{ and } E_0 :=
  \bigoplus_{j\geq 0} G_{2j} \otimes_Q S(j).
$$
For $J \in \N^c$, consider the maps 
\[ G_{2j+1} \otimes S(j) \xra{\sigma_J \otimes T^J} G_{2j +2|J|}
\otimes S(j + |J|)\]
\[ G_{2j} \otimes S(j) \xra{\sigma_J \otimes T^J} G_{2j +2|J|-1}
\otimes S(j + |J|).\]

We use these to define homogeneous maps $g_1: E_1
  \to E_0$ and $g_0: E_0 \to E_1(1)$, component-wise, as:
  \[ (g_1)_j = \sum_{J \in \N^c} \sigma^J \otimes T^J: G_{2j+1}
  \otimes_Q S(j) \to \bigoplus_i G_{2i} \otimes_Q S(i) = E_0\]
\[ (g_0)_j = \sum_{J \in \N^c} \sigma^J \otimes T^J : G_{2j}
  \otimes_Q S(j) \to \bigoplus_i G_{2i-1} \otimes_Q S(i) = E_1(1).\]

Using the defining properties of the system of higher homotopies, one checks:
\begin{lem} There are equalities:
$$
\begin{aligned}
 g_0 \circ g_1 & = \sum_{i = 1}^c f_i
 \otimes T_i = 1_{E_1} \otimes W \\ 
g_1(1) \circ g_0 & = \sum_{i = 1}^c f_i
 \otimes T_i = 1_{E_0} \otimes W. \\
\end{aligned}
$$
\end{lem}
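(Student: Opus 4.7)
The plan is to expand both compositions component-wise on each summand of $E_1$ and $E_0$, and then collect terms by degree, at which point the three defining identities of the system of higher homotopies $\boldsymbol{\sigma}$ produce exactly $W\cdot \mathrm{id}$.

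First I would fix $j\geq 0$ and restrict $g_0\circ g_1$ to the summand $G_{2j+1}\otimes_Q S(j)$ of $E_1$. By the definitions, $g_1$ sends this summand into $\bigoplus_{J\in \N^c} G_{2(j+|J|)}\otimes_Q S(j+|J|)\subseteq E_0$ via $\sum_J \sigma^J\otimes T^J$, and then $g_0$ applied to the summand $G_{2(j+|J|)}\otimes_Q S(j+|J|)$ gives $\sum_{J'} \sigma^{J'}\otimes T^{J'}$ landing in the appropriate summand of $E_1(1)$. Composing and regrouping by $K=J+J'$,
\[
(g_0\circ g_1)\bigl|_{G_{2j+1}\otimes S(j)} \;=\; \sum_{K\in\N^c}\,\Bigl(\sum_{J+J'=K}\sigma^{J'}\circ\sigma^{J}\Bigr)\otimes T^{K}.
\]

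Now I would apply the defining relations of $\boldsymbol{\sigma}$ term-by-term to the inner sums. For $K=0$, the sum is $\sigma^0\sigma^0=\partial_G^2=0$. For $|K|\geq 2$, the sum vanishes by the third relation. For $K=i$ (the $i$-th standard basis vector), the sum is exactly $\sigma^0\sigma^i+\sigma^i\sigma^0=f_i\cdot 1_G$. Hence the whole expression collapses to $\sum_{i=1}^c (f_i\cdot 1_G)\otimes T_i = 1_{E_1}\otimes W$. The second identity $g_1(1)\circ g_0=1_{E_0}\otimes W$ follows by the identical computation after restricting to $G_{2j}\otimes_Q S(j)\subseteq E_0$; the homological parity of the source plays no role in the calculation, only the fact that $\{\sigma^J\}$ is a system of higher homotopies.

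There is no real obstacle here beyond careful bookkeeping of the grading shifts $S(j)$ to confirm that each $\sigma^{J'}\sigma^J\otimes T^{J+J'}$ actually lands in the asserted summand of $E_0$ or $E_1(1)$; the content of the lemma is simply that the matrix factorization identities for $(g_0,g_1)$ are a direct repackaging of the defining relations of $\boldsymbol{\sigma}$ through the polynomial generating function $\sum_J \sigma^J\otimes T^J$.
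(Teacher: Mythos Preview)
Your proof is correct and is exactly the computation the paper has in mind: the paper does not give a detailed argument but simply says ``Using the defining properties of the system of higher homotopies, one checks'' the lemma, and your expansion by $K=J+J'$ followed by case analysis on $|K|$ is precisely that check.
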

This shows that $E = E(M, G, \bs{\sigma}) := (E_1
\xra{g_1} E_0 \xra{g_0} E_1(1))$ is an
object in $[MF^{gr}(S, W)]$.

\begin{defn} \label{def:MFHH}
For $G$ and $\bs{\sigma}$ as above, we set $\bE =
\bE(M,G,\bs{\sigma})$ to be the
object $\widetilde{E(M, G, \bs{\sigma})}$ of $[MF(\ps Q, \cO(1),
W)]$. Explicitly, $\bE = (\cE_1 \xra{e_1} \cE_0 \xra{e_0} \cE(1))$ with
\begin{align*}
  \cE_1 = \widetilde{ E_1 } = \bigoplus_{j\geq 0}G_{2j+1} \otimes_Q
  \cO_{\ps Q}(j) \quad & \quad \cE_0 = \widetilde{E_0} =
  \bigoplus_{j\geq0} G_{2j} \otimes_Q \cO_{\ps Q}(j) \\
e_1 = \widetilde{g_1}: \cE_1 \to \cE_0 \quad  & \quad
e_0 = \widetilde{g_0}: \cE_0 \to \cE_1(1).
\end{align*}
\end{defn}

\begin{prop} 
\label{higher_hom_res}
Let $M$ be a finitely generated 
$R$-module that has finite projective dimension over $Q$. Let $G$ be a finite projective 
  $Q$-resolution of $M$, $\bs{\sigma}$ a system of higher
  homotopies on $G$, and $\bE = \bE(M, G, \bs{\sigma})$ the matrix
  factorization constructed above. Then the complex $F( \RQ^\sharp \bE )$,
  where  $\RQ^\sharp \bE$ is defined
in \ref{def:flat} and $F$ is defined in \ref{def:F},
  is exactly the standard resolution $G\{\boldsymbol{\sigma} \}$
  constructed from $G$ and $\boldsymbol{\sigma}$. In particular,
there is an isomorphism in $\Dsing(R)$
$$
\Psi( \bE) \cong M 
$$
where $\Psi: [MF(\ps Q, \cO(1), W)] \to \Dsing(R)$ is the functor of Corollary \ref{orlovcor}.
\end{prop}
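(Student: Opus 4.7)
The plan is to identify the chain complex $F(\RQ^\sharp \bE)$ term-by-term with the standard resolution $G\{\boldsymbol{\sigma}\}$, and then invoke the equivalence $\Psi$ to deduce the second statement.

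First, I would unpack the terms of $F(\RQ^\sharp \bE)$ using the explicit definition of $\bE = \bE(M,G,\boldsymbol{\sigma})$ from Definition~\ref{def:MFHH}: since $\cE_i = \bigoplus_{j \geq 0} G_{2j+i} \otimes_Q \cO_{\ps Q}(j)$, each term of $\RQ^\sharp \bE$ decomposes as a finite direct sum of pieces of the form $\overline{G}_{2j+i} \otimes_R \cO_{\ps R}(j - p - 1)$. Applying $\rH^{c-1}(\ps R, -)$ together with the standard computation
\[
\rH^{c-1}(\ps R, \cO_{\ps R}(n)) \cong \rHom_R(R[T_1, \ldots, T_c]_{-n-c}, R)
\]
for $n \leq -c$ (and zero otherwise), and collecting by cohomological degree, yields an explicit description of $F(\RQ^\sharp \bE)^{-n}$ as a direct sum of summands $\overline{G}_i \otimes_R \rHom_R(R[T]_{(n-i)/2}, R)$. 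The canonical identification of monomials $T^J \leftrightarrow \chi^J$ gives an $R$-linear isomorphism $\rHom_R(R[T_1, \ldots, T_c]_k, R) \cong \rHom_R(R[\chi_1, \ldots, \chi_c]_{2k}, R) = D^{-2k}$, which matches these terms with those of $G\{\boldsymbol{\sigma}\} = \overline{G} \otimes_R D$.

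Second, I would show the differentials agree. By construction, the maps $e_0$ and $e_1$ of $\bE$ are assembled out of components of the shape $\sigma^J \otimes T^J$; pulling back along $\RQ^*$, twisting, and taking $\rH^{c-1}(\ps R, -)$ produces differentials on $F(\RQ^\sharp \bE)$ whose components are of the form $\sigma^J \otimes (\text{mult.\ by } T^J)$, where the second factor is the map on sheaf cohomology induced by multiplication by $T^J \in \Gamma(\ps Q, \cO(|J|))$. The main task is to verify that, under the duality isomorphism $\rH^{c-1}(\ps R, \cO_{\ps R}(-c-k)) \cong \rHom_R(R[T]_k, R) \cong D^{-2k}$, the induced map $\cdot T^J$ corresponds exactly to multiplication by $\chi^J$ on $D$ viewed as an $R[\chi_1, \ldots, \chi_c]$-module via the graded-dual pairing. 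With this identified, the differential of $F(\RQ^\sharp \bE)$ becomes $\sum_J \sigma^J \otimes \chi^J$, matching the differential of $G\{\boldsymbol{\sigma}\}$.

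Having matched terms and differentials, we conclude $F(\RQ^\sharp \bE) = G\{\boldsymbol{\sigma}\}$. Since $G\{\boldsymbol{\sigma}\}$ is a projective $R$-resolution of $M$ by the discussion preceding Definition~\ref{def:MFHH} (citing \cite[Theorem 7.2]{Ei80}), this gives $F(\RQ^\sharp \bE) \cong M$ in $\Db(R)$. For the final assertion, Proposition~\ref{main_equiv_explicit} gives $\Psi(\bE) \cong \Gamma(\cU, \RQ^\sharp \bE)$ in $\rDsg(Q \onto R)$, and for any $m$ sufficiently small for $\bE$, Proposition~\ref{prop:F} yields $\Gamma(\cU, \RQ^\sharp \bE) \cong F(\RQ^\sharp \bE)^{\leq m}$ in $\Dsg(R)$; the cone of the canonical map $F(\RQ^\sharp \bE) \to F(\RQ^\sharp \bE)^{\leq m}$ is a bounded complex of projective $R$-modules, hence the two are isomorphic in $\Dsg(R)$. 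Combining gives $\Psi(\bE) \cong F(\RQ^\sharp \bE) \cong M$ in $\Dsg(R)$. The principal obstacle is the differential-matching step: it rests on the compatibility of the $T^J$-action on $\rH^{c-1}$ of twists of $\cO_{\ps R}$ with the $\chi^J$-action on the (divided-power-like) graded dual $D$, which is the one genuinely nontrivial verification; everything else is index bookkeeping.
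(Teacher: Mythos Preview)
Your proposal is correct and follows essentially the same approach as the paper's own proof: unwind $F(\RQ^\sharp \bE)$ using the explicit form of $\bE(M,G,\bs{\sigma})$, invoke the standard computation of $\rH^{c-1}(\ps R,\cO_{\ps R}(n))$ to identify the terms with $\overline{G}\otimes_R D$, match the differentials, and then pass to $\Dsg(R)$ via Propositions~\ref{main_equiv_explicit} and~\ref{prop:F} together with the observation that the truncation map has perfect cone. In fact you are more explicit than the paper about the differential-matching step (the paper simply writes ``one may readily verify that the differentials are the same''); your identification of the $T^J$-action on $\rH^{c-1}$ with the $\chi^J$-action on $D$ as the one genuine check is exactly right.
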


\begin{proof}
First note that $\RQ^*( G_k \otimes_Q \cO_{\ps Q}(j) ) \cong \overline{G_k}
\otimes_R \cO_{\ps R}(j)$, where $\RQ: \ps R \into \ps Q$ is the
canonical inclusion. Thus, we have that
\[ \RQ^\sharp( \bE ) = \cdots \to \bigoplus_j \overline{G}_{2j+4} \otimes_R
\cO(j) \to \bigoplus_j \overline{G}_{2j+3} \otimes_R
\cO(j) \to \bigoplus_j \overline{G}_{2j+2} \otimes_R
\cO(j) \to 0\] 
where the last term is in cohomological degree $c - 1$. Applying the
functor $F$ we have
\[
  F(\RQ^\sharp( \bE )) = \cdots \to \rH^{c-1}(\ps R, \bigoplus_j \overline{G}_{2j+4} \otimes_R
\cO(j)) \to
\rH^{c-1}(\ps R, \bigoplus_j \overline{G}_{2j+3} \otimes_R
\cO(j))\] 
\[\to \rH^{c-1}(\ps R, \bigoplus_j \overline{G}_{2j+2} \otimes_R
\cO(j) ) \to 0, \]
where the last term is located in cohomological degree $2c - 2$. In
general, we have that $ F(\RQ^\sharp( \bE ))^{2i} = \bigoplus_j G_{2j
  +2(c-i)} \otimes \cO(j)$. Now, by \cite[Thm. III.5.1]{MR0463157},
for all $j$, there is
a natural isomorphism
\begin{align*} \label{E719}
\rH^{c-1}( {\ps R}, \cO_{{\ps R}}(j)) & \cong 
\Hom R {R[T_1, \ldots, T_c]^{-j -c}} R \\ & \cong \Hom R {R[\chi_1, \ldots, \chi_c]^{-2j - 2c}} R = D^{2j + 2c}.
\end{align*}
Since each $G_i$ is a projective $R$-module, this gives that
\begin{align*}
  F(\RQ^\sharp( \bE )) \cong \cdots & \to \bigoplus_j \overline{G}_{2j+4} \otimes_R
D^{2j+2c}  \to 
\bigoplus_j \overline{G}_{2j+3} \otimes_R
D^{2j+2c} \\ & \to \bigoplus_j \overline{G}_{2j+2} \otimes_R
D^{2j+2c} \to 0
\end{align*}
where the last term is in cohomological degree $2c - 2$. However, $D^j
= 0$ unless $j \leq 0$ and $G_i = 0$ unless $i \geq 0$, and thus the
first non-zero terms of this complex are
\[ 
\cdots \to (G_2 \otimes D^0) \oplus (G_0 \otimes D^2) \to G_1 \otimes D^0 \to G_0
\otimes D^0 \to 0 \]
where $G_0 \otimes D^0$ is in degree 0.
As a graded $R$-module this is exactly $\overline{G} \otimes D$, and
one may readily verify that the differentials are the
same. 

Thus $F(\RQ^\sharp(\bE) )$ is a free resolution of $M$ and so is
isomorphic to $M$ in $\Db(R)$. Let $m$ be an integer which is
sufficiently small for $\bE$. Then by \ref{prop:F} we have that
$F(\RQ^\sharp(\bE))^{\leq m - c +1} = F(\RQ^\sharp(\bE)^{\leq m}) \cong
\Psi( \bE ) \in \Dsing(R).$ Since $F (\RQ^\sharp(\bE))$ is projective in
each degree, the canonical map
\[ F(\RQ^\sharp(\bE)) \to F(\RQ^\sharp(\bE))^{\leq m - c +1}\]
is an isomorphism in $\Dsing(R)$. Thus $M \cong \Psi(\bE) \in \Dsing(R).$
\end{proof}

\begin{cor}
There is an equivalence
\[\varphi: [MF^{gr}( S, W)]/\flc \to \rDsg( Q \onto R), \]
where $\flc$ is the full subcategory of $[MF^{gr}( S, W)]$ with
objects those $(E_1 \xra{e_1} E_0 \xra{e_0} E_1(1))$
such that $((\coker e_1)_\p)_0$ is a free $(S_\p)_0$-module for all $p \in
\Proj S$. If $M$ is an $R$-module with a finite projective resolution $G$ over $Q$
and $\bs{\sigma}$ is a system of higher homotopies on $G$, then
$\varphi( E(M , G, \bs{\sigma})) \cong M \in \rDsg( Q \onto R).$
\end{cor}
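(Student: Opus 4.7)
The plan is to obtain $\varphi$ by composing two equivalences that have already been established. Specifically, Proposition~\ref{graded_equiv} gives an equivalence of triangulated categories
\[
\widetilde{(-)}\colon [MF^{gr}(S, W)]/\flc \xra{\cong} [MF(\ps Q, \cO(1), W)],
\]
and Corollary~\ref{orlovcor} gives an equivalence
\[
\Psi\colon [MF(\ps Q, \cO(1), W)] \xra{\cong} \rDsg(Q \onto R).
\]
I define $\varphi := \Psi \circ \widetilde{(-)}$. As a composition of triangulated equivalences, $\varphi$ is itself an equivalence of triangulated categories, proving the first statement.

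For the second statement, let $M$ be an $R$-module with finite projective dimension over $Q$, let $G$ be a finite projective $Q$-resolution of $M$, and let $\bs{\sigma}$ be a system of higher homotopies on $G$. By Definition~\ref{def:MFHH}, the object $\bE(M, G, \bs{\sigma})$ of $[MF(\ps Q, \cO(1), W)]$ is precisely $\widetilde{E(M, G, \bs{\sigma})}$. Applying Proposition~\ref{higher_hom_res}, we have $\Psi(\bE(M, G, \bs{\sigma})) \cong M$ in $\rDsg(Q \onto R)$. Chaining these identifications,
\[
\varphi(E(M, G, \bs{\sigma})) = \Psi\bigl(\widetilde{E(M, G, \bs{\sigma})}\bigr) = \Psi(\bE(M, G, \bs{\sigma})) \cong M,
\]
as claimed.

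In short, all the genuine content has already been packaged into Propositions~\ref{graded_equiv} and~\ref{higher_hom_res}, and the corollary is simply the act of composing and recording the image of the particular object $E(M, G, \bs{\sigma})$. There is no real obstacle here; the only thing to verify is that the composition is well-defined, which follows because both constituent functors are triangulated equivalences between the specified categories.
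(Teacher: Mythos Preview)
Your proof is correct and follows essentially the same approach as the paper: define $\varphi$ as the composition $\Psi \circ \widetilde{(-)}$ of the equivalences from Proposition~\ref{graded_equiv} and Corollary~\ref{orlovcor}, then use Definition~\ref{def:MFHH} and Proposition~\ref{higher_hom_res} to identify $\varphi(E(M,G,\bs{\sigma}))$ with $M$.
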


\begin{proof}
  Define $\varphi$ to be the composition of the equivalence $\widetilde{(-)}$ of \ref{graded_equiv}
  with the equivalence $\Psi$ of \ref{orlovcor}. Let
  $E = E(M , G, \bs{\sigma})$ be the object of $[MF^{gr}(S,W)]$
  defined above. By definition, $\widetilde{E} = \bE( M, G,
  \bs{\sigma})$, and \[\varphi(E)
  := \Psi( \widetilde{E} ) = \Psi( \bE( M, G,
  \bs{\sigma})) \cong M\]
where the last isomorphism is by Proposition \ref{higher_hom_res}.
\end{proof}

\section{Properties of matrix factorizations}
\label{props_mf}
In this section we state and prove basic isomorphisms using matrix
factorizations and discuss the support of a matrix
factorization. These properties will translate directly to the
properties of stable support sets described in Theorem \ref{introthm4} of the
Introduction. 

In this section, as there is nothing gained in working over $\ps Q$, we work in the
generality that $X$ is a
Noetherian separated
scheme and $\cL$ is a line bundle on $X$. As a matter of convenience,
we write $\cF(1)$ for  $\cF
\otimes_{\cO_X} \cL$ even though $\cL$ is not assumed to be very ample.

\begin{defn}
\label{def_ten_prod_mf} Suppose $W$ and $V$ are global sections of $\cL$ and
let
$\E =  (\cE_1 \xra{e_1} \cE_0 \xra{e_0} \cE_1(1))$ and  $\F =
(\cF_1 \xra{f_1} \cF_0 \xra{f_0} \cF_1(1))$ are objects of
$MF(X, \cL, W)$ and $MF(X, \cL, V)$, respectively.

\begin{enumerate}
\item Their {\em tensor product} is the object $\E \otimes_{MF} \F$ of $MF(X, \cL, W + V)$
  given by
  \[ \left (
    \begin{array}{c}
      \cE_0 \otimes \cF_1 \\
      \oplus\\
      \cE_{1} \otimes \cF_{0}
    \end{array}
    \xra{\partial^{\bE \otimes \bF}_1}
    \begin{array}{c}
      \cE_0 \otimes \cF_0 \\
      \oplus\\
      (\cE_{1} \otimes \cF_{1})(1)
    \end{array}
    \xra{\partial^{\bE \otimes \bF}_0} \left(\begin{array}{c}
        \cE_0 \otimes \cF_1 \\
        \oplus\\
        \cE_{1} \otimes \cF_{0}
      \end{array}\right )(1) \right).
  \]
  The differentials are given by the formulas
$$
\partial^{\bE \otimes \bF}_1 =
\begin{bmatrix}
  1 \otimes f_1 & e_1 \otimes 1 \\
  e_0 \otimes 1  & -1 \otimes f_0 \\
\end{bmatrix}
\quad \text{ and } \quad
\partial^{\bE \otimes \bF}_0 =
\begin{bmatrix}
  1 \otimes f_0 & (e_1 \otimes 1)(1) \\
  e_0 \otimes 1  & (-1 \otimes f_1)(1) \\
\end{bmatrix}
$$
using the canonical isomorphisms
$$
\left(\begin{array}{c}
    \cE_0 \otimes \cF_1 \\
    \oplus\\
    \cE_{1} \otimes \cF_{0}
  \end{array}\right )(1) 
\cong
\begin{array}{c}
  (\cE_0 \otimes \cF_1)(1) \\
  \oplus\\
  (\cE_{1} \otimes \cF_{0})(1)
\end{array} 
$$
and $\cE_i(1) \otimes \cF_j \cong (\cE_i \otimes \cF_j)(1) \cong \cE_i
\otimes \cF_j(1)$.
\\
\item Their \emph{$\operatorname{Hom}$-object} is the object 
$\HomMF(\bE, \bF)$
of
  $MF(X, \cL, V-W)$ given by
\begin{equation*}
{\small
\begin{matrix}
  \uHom(\cE_0, \cF_1) \\
  \oplus \\
  \uHom(\cE_1, \cF_0(-1))
\end{matrix}\xra{\partial^{-1}}
\begin{matrix}
  \uHom(\cE_0, \cF_0) \\
  \oplus \\
  \uHom(\cE_1, \cF_1)
\end{matrix}
\xra{\partial^{0}}
\left(\begin{matrix}
  \uHom(\cE_0, \cF_1) \\
  \oplus \\
  \uHom(\cE_1, \cF_0(-1))
\end{matrix}
\right)(1)
}
\end{equation*}
with the differentials defined using the same formulas as in Definition \ref{def_hom_complex}, which
represents the special case $W=V$.
\end{enumerate}

\end{defn}

\begin{rem} We will eventually assume $X$ is regular. If
  $X$ is not regular, these definitions should be view as
  ``non-derived''. 
\end{rem}

\begin{rem} If $W + V = 0$, we may interpret $\bE \otimes_{MF}
  \bF$ as an object of $TPC(X, \cL)$, i.e.\ a twisted periodic
  complex. See
  \ref{twisted_periodic_def} for the definition.
If $W=V=0$, we obtain a tensor operator
  for the category $TPC(X, \cL)$.
\end{rem}

\begin{prop} \label{assoc_comm} Let $W,V,$ and $U$ be global sections of $\cL$,
  and $\bE$, $\bF$, and $\bG$ be objects of $MF(X, \cL, W)$,
  $MF(X, \cL, V),$ and $MF(X, \cL, U)$, respectively. There are isomorphisms:
  \begin{enumerate}
  \item  $\bE \otimes_{MF} \bF \cong \bF \otimes_{MF} \bE$
in $MF(X, \cL, W+V)$, 

\item $(\bE \otimes_{MF} \bF) \otimes_{MF} \bG \cong \bE \otimes_{MF} (\bF
\otimes_{MF} \bG)$
in $MF(X, \cL, W+V+U)$, and  

\item $\HomMF( \bE \otimes_{MF} \bF, \bG ) \cong \HomMF( \bE, \HomMF(
  \bF, \bG))$ in $MF(X, \cL, U - V - W )$.

\end{enumerate}

\end{prop}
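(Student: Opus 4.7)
The plan is to define each of the three isomorphisms componentwise from the corresponding structural isomorphism for quasi-coherent sheaves (symmetry of $\otimes$, associativity of $\otimes$, and tensor-hom adjunction), and then verify compatibility with the differentials of the matrix factorizations. Since the differentials of $\bE \otimes_{MF} \bF$ and $\HomMF(\bE, \bF)$ are given as explicit $2 \times 2$ block matrices built from the components of $e_0, e_1, f_0, f_1$, each check reduces to a finite number of square diagrams of sheaves that commute by naturality of the underlying isomorphism.

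For (1), I would define the swap in each degree using the underlying symmetries $\tau_{ij}: \cE_i \otimes \cF_j \xra{\cong} \cF_j \otimes \cE_i$, together with a Koszul sign $(-1)^{ij}$: the degree-$1$ component is the anti-diagonal matrix interchanging $\cE_0 \otimes \cF_1$ with $\cF_1 \otimes \cE_0$ and $\cE_1 \otimes \cF_0$ with $\cF_0 \otimes \cE_1$, while the degree-$0$ component is diagonal with $\tau_{00}$ and $-\tau_{11}$. The sign is forced by the minus sign appearing in the entry $-1 \otimes f_0$ of $\partial^{\bE \otimes \bF}_1$, and a direct $2 \times 2$ matrix calculation verifies that both block-matrix differentials are intertwined.

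For (2), observe that each degree of $(\bE \otimes_{MF} \bF) \otimes_{MF} \bG$ and of $\bE \otimes_{MF} (\bF \otimes_{MF} \bG)$ decomposes into a direct sum of four triple tensor products of the form $\cE_i \otimes \cF_j \otimes \cG_k$, indexed by triples of a fixed parity and twisted appropriately by $\cL$. Define the associator to be the ordinary sheaf-theoretic associator on each summand, after aligning the two decompositions; compatibility with the differentials is immediate entry-by-entry. Part (3) is handled analogously, identifying both sides of $\HomMF(\bE \otimes_{MF} \bF, \bG) \cong \HomMF(\bE, \HomMF(\bF, \bG))$ as direct sums of sheaves $\uHom(\cE_i \otimes \cF_j, \cG_k)$, respectively $\uHom(\cE_i, \uHom(\cF_j, \cG_k))$, and applying ordinary tensor-hom adjunction summand-by-summand.

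The main obstacle is Koszul sign bookkeeping, especially in (1), together with consistently aligning the direct-sum decompositions in (2) and (3). A conceptually cleaner rephrasing, which I would use in the write-up, is to note that when $W = V = U = 0$ these isomorphisms reduce to the standard symmetric monoidal structure on the category of twisted periodic complexes from Definition \ref{twisted_periodic_def}. The general case follows by checking that the curvature relations $e_0 e_1 = W \cdot \mathrm{id}$, $f_0 f_1 = V \cdot \mathrm{id}$, $g_0 g_1 = U \cdot \mathrm{id}$ enter the three combined differentials additively in the correct way, so that the same structural maps on the underlying bigraded sheaves continue to be chain maps.
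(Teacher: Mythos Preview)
Your proposal is correct and takes essentially the same approach as the paper: both define the isomorphisms as the natural ones on underlying sheaves with the Koszul sign $(-1)^{ij}$ (the paper states this explicitly for the symmetry map), and both reduce the verification to a straightforward but tedious check that these maps commute with the block-matrix differentials. Your write-up is more detailed than the paper's, which dispatches the whole thing in three sentences, and your closing remark about the symmetric monoidal structure on twisted periodic complexes is a nice organizing observation that the paper does not make.
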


\begin{proof} The isomorphisms of underlying locally free sheaves are
  the natural ones in each degree, taking care to observe the usual
  sign convention. For example,  the map
  $\bE \otimes_{MF} \bF \to \bF \otimes_{MF} \bE$ sends a section $a \otimes
  b$ of $\cE_i \otimes \cF_j$ to $(-1)^{ij}b \otimes a$.
A straightforward, but tedious, check shows that these isomorphisms
commute with the differentials of the matrix factorizations.
\end{proof}

\begin{defn} 
The {\em dual} of an object $\E =  (\cE_1 \xra{e_1} \cE_0 \xra{e_0}
 \cE_1(1))$ in $MF(X, \cL, W)$ is the object of $MF(X, \cL, -W)$ given by
\[ \E^\vee = \cE_1(1)^\vee \xra{ -e_0^\vee} \cE_0^\vee \xra{e_1^\vee}
\cE_1(1)^\vee(1),
\]
where $(-)^\vee$ denotes the functor $\uHom_{\cO_X}(-, \cO_X)$ and we use the
canonical isomorphisms
$\cE_1(1)^\vee(1) \cong \cE_1^\vee(-1)(1) \cong \cE_1^\vee$.
\end{defn}

\begin{rem} \label{double_dual}Equivalently, $\bE^\vee$ is $\HomMF(\bE, \cO_X)$ where
  $\cO_X$ denotes the matrix factorization $(0 \to \cO_X \to 0)$
  belonging to  $MF(X, \cL, 0)$. We also note that there is a natural isomorphism $(\bE^\vee)^\vee \cong \bE$. 
\end{rem}

\begin{prop} \label{P72a}
Let $\E$ and $\F$ be objects of $MF(X, \cL, W)$ and $MF(X,\cL,V)$,
respectively. There is a natural isomorphism
in $MF(X, \cL, V-W)$
$$
\E^\vee \otimes_{MF} \F \map{\cong} \HomMF(\bE, \bF).
$$
\end{prop}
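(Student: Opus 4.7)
The plan is to construct an explicit isomorphism $\Phi: \bE^\vee \otimes_{MF} \bF \xra{\cong} \HomMF(\bE, \bF)$ componentwise, using the canonical evaluation isomorphism
\[
\mathrm{ev}_{\cA, \cB}: \cA^\vee \otimes \cB \xra{\cong} \uHom_{\cO_X}(\cA, \cB), \qquad \alpha \otimes b \mapsto \bigl(a \mapsto \alpha(a)\, b \bigr),
\]
which is natural in both arguments and is an isomorphism when $\cA$ is locally free of finite rank. First I would unpack Definition \ref{def_ten_prod_mf} in the case where the first factor is $\bE^\vee = (\cE_1(1)^\vee \xra{-e_0^\vee} \cE_0^\vee \xra{e_1^\vee} \cE_1(1)^\vee(1))$, and use the canonical identifications $\cE_1(1)^\vee(1) \cong \cE_1^\vee$, $\cE_1(1)^\vee \otimes \cF_0 \cong \uHom(\cE_1, \cF_0(-1))$, etc.\ to identify the underlying graded locally free sheaves of $\bE^\vee \otimes_{MF} \bF$, term by term under $\mathrm{ev}$, with those of $\HomMF(\bE, \bF)$ as listed in Definition \ref{def_hom_complex}.

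The remaining work is to verify that $\Phi$ intertwines the differentials. Both differentials are $2 \times 2$ block matrices whose entries are built from the structure maps $e_i$ and $f_j$. Under $\mathrm{ev}$, the operation $e_i^\vee \otimes 1$ on the tensor-product side corresponds to pre-composition $e_i^*$, while $1 \otimes f_j$ corresponds to post-composition $(f_j)_*$. Comparing Definitions \ref{def_ten_prod_mf} and \ref{def_hom_complex} entry-by-entry, the diagonal blocks (involving only the $f_j$'s) agree immediately, while the off-diagonal blocks (involving $e_0, e_1$) differ by signs due to the differing sign conventions in the two definitions and the extra $-1$ already present in the top differential of $\bE^\vee$. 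To reconcile the signs I would modify $\mathrm{ev}$ by $-1$ on the appropriate direct summands (for instance on $\cE_1^\vee \otimes \cF_1$ in degree $0$ and on $\cE_1(1)^\vee \otimes \cF_0$ in degree $1$); a direct block-matrix computation then shows that $\Phi$ is a strict morphism of matrix factorizations of $V - W$.

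Since each component of $\Phi$ is an isomorphism (the evaluation $\mathrm{ev}$ up to a sign), the assembled map $\Phi$ is an isomorphism of matrix factorizations, and naturality in both arguments is inherited from the naturality of $\mathrm{ev}$. The main obstacle in this proof is not conceptual but combinatorial: carefully tracking the twists by $\cL$ (in particular the interplay between $\cE_1(1)^\vee$, $\cE_1^\vee(-1)$, and $\uHom(\cE_1, -(-1))$) and the various sign conventions in the definitions of $\bE^\vee$, $\otimes_{MF}$, and $\HomMF$, so that the sign choice defining $\Phi$ makes the two matrix factorizations agree on the nose rather than merely up to homotopy.
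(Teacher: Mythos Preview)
Your approach is essentially identical to the paper's: both construct the isomorphism componentwise from the canonical evaluation map $\cE^\vee \otimes \cF \cong \uHom(\cE,\cF)$, adjust by a sign on one summand, and leave the compatibility with differentials as a direct block-matrix check. One small correction: the sign placement that actually works is a single $-1$ on the summand $\cE_1(1)^\vee \otimes \cF_1(1) \cong \uHom(\cE_1,\cF_1)$ in degree $0$ (with no sign in degree $1$); your ``for instance'' choice of putting signs in both degrees does not make the $\partial^{-1}$ square commute, so when you carry out the computation be sure to pin this down.
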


\begin{proof}
Recall that for locally free coherent sheaves $\cE$ and $\cF$, there is a canonical isomorphism $\cE^\vee \otimes \cF \map{\cong}
\uHom(\cE, \cF)$ given on sections by  $\delta \otimes f \mapsto (e \mapsto
\delta(e) \cdot f)$.
In homological degree $1$, the isomorphism we seek is the direct sum of these  canonical ones:
$$
\begin{bmatrix}
can & 0 \\
0 & can \\
\end{bmatrix}:
\begin{matrix}
\cE_0^\vee \otimes \cF_1 \\
\oplus \\
\cE_1^\vee \otimes  \cF_0(-1) \\
\end{matrix}
\map{\cong}
\begin{matrix}
\uHom(\cE_0, \cF_1)  \\
\oplus \\
\uHom(\cE_1, \cF_0(-1)).
\end{matrix}
$$
In degree $0$, it is
$$
\begin{bmatrix}
can & 0 \\
0 & -can \\
\end{bmatrix}:
\begin{matrix}
\cE_0^\vee \otimes \cF_0 \\
\oplus \\
\cE_1(1)^\vee \otimes  \cF_1(1) \\
\end{matrix}
\map{\cong}
\begin{matrix}
\uHom(\cE_0, \cF_0)  \\
\oplus \\
\uHom(\cE_1, \cF_1),
\end{matrix}
$$
where we have also used the canonical isomorphism 
$\uHom(\cE_1(1), \cF_1(1)) \cong \uHom(\cE_1, \cF_1)$.
We omit the straightforward verification that the differentials commute
with these isomorphisms.
\end{proof}

\begin{rem} In the case $W=V$, the proposition gives an isomorphism of
  twisted periodic complexes.
\end{rem}

\begin{prop} \label{dualten}
For matrix factorizations
  $\bE$ and $\bF$ in $MF(X, \cL, W)$ and $MF(X, \cL, V)$,
  respectively, there are natural isomorphisms
  \begin{enumerate}
  \item $
(\bE \otimes_{MF} \bF)^\vee \cong \bE^\vee \otimes_{MF} \bF^\vee
$
in $MF(X, \cL, -W-V)$ and
\item $\HomMF(\bE, \bF)^\vee \cong \HomMF(\bF, \bE)$ in  $MF(X, \cL, W-V)$.
\end{enumerate}
\end{prop}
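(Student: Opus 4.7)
The plan is to establish (1) directly from the component-wise duality isomorphisms, and then derive (2) as a formal consequence of (1) together with Proposition \ref{P72a} and Proposition \ref{assoc_comm}.

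For part (1), I would begin by writing out both sides degree by degree. In each homological degree, the two sides agree via the canonical isomorphism $(\cE \otimes \cF)^\vee \cong \cE^\vee \otimes \cF^\vee$ for locally free coherent sheaves, combined with the canonical identifications $\cG(1)^\vee \cong \cG^\vee(-1)$ and $(\cG \oplus \cG')^\vee \cong \cG^\vee \oplus \cG'^\vee$. Concretely, in degree $1$ we get a candidate isomorphism
\[
\bigl(\cE_0 \otimes \cF_0\bigr)^\vee \oplus \bigl((\cE_1 \otimes \cF_1)(1)\bigr)^\vee
\xra{\cong}
\cE_0^\vee \otimes \cF_0^\vee \oplus \cE_1(1)^\vee \otimes \cF_1(1)^\vee,
\]
and analogously in degree $0$. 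With these identifications in place, what remains is to check that the differentials of $(\bE \otimes_{MF} \bF)^\vee$ (obtained by transposing and negating the off-diagonal parts as in the definition of the dual) match those of $\bE^\vee \otimes_{MF} \bF^\vee$ (given by the Koszul-style formulas of Definition \ref{def_ten_prod_mf}). This is the main technical obstacle: the signs appearing in the dual (the $-e_0^\vee$ in Definition of dual) must interact correctly with the signs in the off-diagonal entries of $\partial^{\bE \otimes \bF}$, and a small amount of sign-bookkeeping, possibly assisted by inserting appropriate $\pm 1$'s on one of the summands (as was done in the proof of Proposition \ref{P72a}), will be needed to make the squares commute on the nose.

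For part (2), I would avoid any further direct calculation by chaining together earlier results. Using Proposition \ref{P72a} twice, the double-dual identification from Remark \ref{double_dual}, commutativity of $\otimes_{MF}$ from Proposition \ref{assoc_comm}(1), and part (1) just proved, we have natural isomorphisms
\[
\HomMF(\bE, \bF)^\vee
\cong (\bE^\vee \otimes_{MF} \bF)^\vee
\cong (\bE^\vee)^\vee \otimes_{MF} \bF^\vee
\cong \bE \otimes_{MF} \bF^\vee
\cong \bF^\vee \otimes_{MF} \bE
\cong \HomMF(\bF, \bE),
\]
each in the appropriate category of matrix factorizations (the total potential tracks as $-(V-W) = W - V$). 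Naturality in $\bE$ and $\bF$ follows from the naturality of each of the isomorphisms used, so no further work is required once (1) is in hand.
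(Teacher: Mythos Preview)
Your proof of part (2) is essentially identical to the paper's: the same chain of isomorphisms using Proposition \ref{P72a}, part (1), the double-dual identification, and commutativity of $\otimes_{MF}$.

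For part (1), your approach is correct but more laborious than the paper's. You propose a direct component-wise verification, writing out the sheaves in each degree and then checking the sign bookkeeping on the differentials. The paper instead obtains (1) formally, with no computation: since $\bE^\vee = \HomMF(\bE, \cO_X)$ by Remark \ref{double_dual}, one has
\[
(\bE \otimes_{MF} \bF)^\vee = \HomMF(\bE \otimes_{MF} \bF, \cO_X) \cong \HomMF(\bE, \HomMF(\bF, \cO_X)) = \HomMF(\bE, \bF^\vee) \cong \bE^\vee \otimes_{MF} \bF^\vee,
\]
the first isomorphism being the Hom-tensor adjunction of Proposition \ref{assoc_comm}(3) and the last being Proposition \ref{P72a}. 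This bypasses all sign-chasing, at the cost of having already done the (comparable) sign verification inside Propositions \ref{assoc_comm} and \ref{P72a}. Your direct route has the virtue of being self-contained and making the isomorphism explicit, which could be useful if one later needs to know the map on the nose; the paper's route is shorter and shows that (1) is really a formal consequence of the adjunction machinery already in place.
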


\begin{proof} 
Part (1) follows from $\operatorname{Hom}$-tensor adjointness and Proposition \ref{P72a}.

For (2), we have
the following isomorphisms:
\begin{align*}
\HomMF(\bE, \bF)^\vee 
&\cong
( \bE^\vee \otimes_{MF} \bF)^\vee
\cong
(\bE^\vee)^\vee \otimes_{MF} \bF^\vee \\
\cong \bE \otimes_{MF} \bF^\vee
&\cong
\bF^\vee \otimes_{MF} \bE \cong
\HomMF(\bF, \bE)
 \end{align*}
by \ref{P72a}, part (1) of this Proposition, \ref{double_dual}, \ref{assoc_comm}(2), and \ref{P72a}, respectively.
\end{proof}

\begin{prop}
\label{switch_isom} Let $W_1, \dots, W_4$ be global sections of $\cL$
and let $\bE_i$ be an object of $MF(X, \cL, W_i)$, for $i
  =1, \dots 4$. There is an
  isomorphism 
\[ 
\HomMF(\bE_1,\bE_2) \otimes_{MF} \HomMF(\bE_3, \bE_4) \cong 
\HomMF(\bE_1, \bE_4) \otimes_{MF} \HomMF(\bE_2, \bE_3) 
\]
in $MF(X, \cL, W_2-W_1+W_4-W_3)$.
\end{prop}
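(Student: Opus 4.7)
The plan is to express both sides of the claimed isomorphism as four-fold tensor products of the $\bE_i$ and their duals, and to identify the two resulting expressions using the symmetric monoidal structure on matrix factorizations developed in Proposition~\ref{assoc_comm}, together with the $\otimes$--$\operatorname{Hom}$ identification from Proposition~\ref{P72a}.

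Concretely, I would first apply Proposition~\ref{P72a} to each $\operatorname{Hom}$-object on both sides, obtaining natural isomorphisms
\[
\HomMF(\bE_1,\bE_2)\otimes_{MF}\HomMF(\bE_3,\bE_4) \;\cong\; (\bE_1^\vee \otimes_{MF} \bE_2)\otimes_{MF}(\bE_3^\vee \otimes_{MF} \bE_4),
\]
and an analogous formula for the right-hand side. By the associativity and commutativity isomorphisms of Proposition~\ref{assoc_comm}, this four-fold tensor product may then be re-bracketed and its factors permuted freely (subject to Koszul signs). Transposing the middle two factors, regrouping, and applying Proposition~\ref{P72a} in reverse then identifies the result with the stated right-hand side. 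The same chain of isomorphisms simultaneously matches the potentials, since under Proposition~\ref{P72a} the potential of $\bE_i^\vee$ is $-W_i$ and tensor product adds potentials.

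The hard part will be purely bookkeeping. My expectation is that there is no genuinely new content beyond Propositions~\ref{assoc_comm} and~\ref{P72a}: one must verify that the composite re-bracketing and transposition of the underlying locally free sheaves is compatible with the matrix factorization differentials, which follows from the coherence of the symmetric monoidal structure already established, and one must take care with the Koszul signs appearing in the commutativity isomorphism (which depend on the $\mathbb{Z}/2$-homological degrees of the factors transposed) and with the canonical identifications $\cE(m)^\vee \cong \cE^\vee(-m)$ needed inside the $\HomMF$-formula of Definition~\ref{def_hom_complex}. Once the sign conventions are aligned with those used in the proofs of Propositions~\ref{assoc_comm} and~\ref{P72a}, no further substantive calculation is required.
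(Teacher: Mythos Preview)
Your proposal is correct and matches the paper's approach exactly: the paper's proof reads ``This follows immediately from Propositions~\ref{assoc_comm} and~\ref{P72a},'' which is precisely the reduction you outline. Your description is in fact more detailed than what the paper provides.
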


\begin{proof}
This follows immediately from Propositions \ref{assoc_comm} and
\ref{P72a}.
\end{proof}

\begin{defn} 
 The \emph{support} of a complex $\cP$ of quasi-coherent sheaves on a
 scheme $X$ is
$$ 
\supp \cP = \{x \in X \, | \, \text{ $\cP_x$ is not exact} \} = 
\bigcup_{i \in \Z} \supp \cH^i( \cP ). 
$$ 
For an
  object $\cP$ of $TPC(X, \cL) = MF(X, \cL, 0)$, i.e., a twisted
  periodic complex, we have that 
\[ \supp \cP = \supp \cH^0(
  \cP ) \cup \supp \cH^1( \cP ) \] since
  $\cH^{i + 2} \cong \cH^i (1 )$. In particular the support of a
  twisted periodic complex is
  a closed subset of $X$.
\end{defn}

Recall that if $\cL = \cO_X$ is the
trivial line bundle, then $TPC(X, \cO_X)$ is the category of
$\Z/2$-graded complexes of locally free coherent sheaves on $X$. 

\begin{lem} \label{lem:istar} Assume $X$ is a regular Noetherian separated
  scheme and $\cL$ is a line bundle on $X$.
For a point $x \in X$, let $k(x)$ denote its residue field and let
 $i_x: \Spec k(x)  \to X$ be the canonical map. 
Given $\cP \in TPC(X, \cL)$, a point $x \in X$ belongs to
$\supp(\cP)$ if and only if the $\Z/2$-graded complex of $k(x)$-vector
spaces $i_x^*(\cP)$ is not exact.
\end{lem}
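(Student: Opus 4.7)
The plan is to work locally at $x$ and reduce to a statement over $R := \cO_{X,x}$ with residue field $k := k(x)$ and maximal ideal $\fm$. After choosing a trivialization $\cL_x \cong R$, the stalk $\cP_x$ becomes a $2$-periodic complex of finitely generated free $R$-modules (with $d^2 = 0$), and $i_x^*\cP$ is identified with $\cP_x \otimes_R k$. Since $\supp(\cP) = \supp\cH^0(\cP) \cup \supp\cH^1(\cP)$, the condition $x \in \supp(\cP)$ is equivalent to $\cP_x$ being non-exact. Thus it suffices to prove: $\cP_x$ is exact if and only if $\cP_x \otimes_R k$ is exact.

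For the direction ``$\cP_x$ exact $\Rightarrow \cP_x \otimes_R k$ exact,'' I would show the stronger fact that $\cP_x$ is contractible. Writing $Z^i := \ker(d^i)$, exactness gives short exact sequences $0 \to Z^i \to \cP_x^i \to Z^{i+1} \to 0$ with $\cP_x^i$ free. Since $R$ is regular, every $Z^i$ has finite projective dimension, and by $2$-periodicity $Z^{i+2} \cong Z^i$. If $\pd_R(Z^j) \geq 1$ for some $j$, the sequences give $\pd_R(Z^{j+2}) = \pd_R(Z^j) + 2$, contradicting $Z^{j+2} \cong Z^j$. Hence every $Z^i$ is projective, each short exact sequence splits, and $\cP_x$ is contractible. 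Tensoring the contracting homotopy with $k$ then yields one for $\cP_x \otimes_R k$, which is therefore exact.

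The converse direction is the main subtlety, and (perhaps surprisingly) does not require regularity; instead it rests on a Nakayama lifting. Suppose $\cP_x \otimes_R k$ is exact. As a $2$-periodic complex of finite-dimensional $k$-vector spaces, it is split-exact and admits a contracting homotopy $\overline{s}$. Since each term of $\cP_x$ is free, I can lift $\overline{s}$ to a degree $-1$ graded $R$-linear endomorphism $s$ of $\cP_x$, and set $u := ds + sd$. A direct computation using $d^2 = 0$ shows $du = dsd = ud$, so $u$ is a chain map. Moreover $u \equiv \id \pmod{\fm}$, so on each finitely generated free term $\cP_x^n$, Nakayama forces $u^n$ to be an automorphism; thus $u$ is a chain automorphism of $\cP_x$ and $u^{-1}$ commutes with $d$. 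Setting $s' := s u^{-1}$ then yields
\[ ds' + s'd \;=\; (ds)u^{-1} + s(u^{-1} d) \;=\; (ds + sd) u^{-1} \;=\; u u^{-1} \;=\; \id, \]
so $\cP_x$ is contractible, in particular exact. This completes the biconditional.
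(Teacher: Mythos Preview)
Your proof is correct, but it takes a different route from the paper's. The paper reduces, as you do, to the statement that over a regular local ring $(S,\fm,k)$ a complex $\cP$ of finitely generated free $S$-modules is exact if and only if $k\otimes_S \cP$ is exact; it then proves both directions simultaneously by induction on a regular system of parameters $x_1,\dots,x_n$, using the short exact sequence $0\to\cP\xra{x_n}\cP\to\cP/x_n\cP\to 0$, the associated long exact sequence, and Nakayama's Lemma (applied to the finitely generated cohomology modules) to conclude that $\cP$ is exact if and only if $\cP/x_n\cP$ is exact.

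By contrast, you treat the two implications separately: for ``exact $\Rightarrow$ exact after $\otimes\,k$'' you exploit periodicity together with finite global dimension to force the cycles to be projective, hence the complex contractible; for the converse you lift a contracting homotopy and correct it via the Nakayama argument on $u=ds+sd$. The paper's argument is shorter, handles both directions at once, and does not use periodicity (it applies to arbitrary, possibly unbounded, complexes of finitely generated free modules). Your argument, on the other hand, yields the stronger conclusion that $\cP_x$ is contractible, and isolates the fact that the converse direction does not require regularity at all, which is a pleasant bonus.
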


\begin{proof}
We can reduce immediately to the assertion that for a regular local
ring $(S, \fm, k)$ and a (possibly unbounded) complex
$\cP$ of finitely generated free $S$-modules, $\cP$ is exact 
if and only if $k \otimes_S \cP$ is exact.

Let $x_1, \ldots, x_n \in S$ be a regular system of parameters.
The long exact sequence in cohomology associated to the short exact sequence 
$$
 0 \to \cP \map{x_n} \cP \to \cP/x_n\cP \to 0
$$
of complexes
and Nakayama's Lemma give that $\cP$ is exact if and only if
$\cP/x_n\cP$ is exact. The result follows by induction on $n$.
\end{proof}

\begin{rem} The assumption that $X$ is regular is essential for Lemma
  \ref{lem:istar}.
\end{rem}

\newcommand{\cQ}{\mathcal{Q}}
\begin{prop} \label{prop:cap}
Assume $X$ is a regular Noetherian separated
  scheme and $\cL$ is a line bundle on $X$.
For objects
 $\cP$ and $\cQ$ of $TPC(X, \cL)$, we have
\[ \supp ( \cP \otimes_{MF} \cQ ) = \supp \cP \cap \supp \cQ. \]
\end{prop}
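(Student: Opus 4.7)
The plan is to reduce everything to a statement about $\Z/2$-graded complexes of vector spaces via Lemma~\ref{lem:istar}. For any point $x \in X$, let $i_x : \Spec k(x) \to X$ denote the canonical map. The functor $i_x^*$ is symmetric monoidal: since $i_x^* \cL$ is canonically identified with a one-dimensional $k(x)$-vector space (and can be trivialized after choosing a generator), and since the construction of $\otimes_{MF}$ in Definition~\ref{def_ten_prod_mf} uses only tensor products of locally free sheaves and the structure maps of $\cL$, one checks directly that
\[
i_x^*(\cP \otimes_{MF} \cQ) \;\cong\; i_x^*\cP \otimes_{k(x)} i_x^*\cQ,
\]
where on the right we use the tensor product of $\Z/2$-graded complexes of $k(x)$-vector spaces (which is what $TPC$ becomes once the line bundle is trivialized).

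Second, I would invoke the Künneth formula over a field to conclude the key algebraic fact: if $A$ and $B$ are $\Z/2$-graded complexes of finite-dimensional $k(x)$-vector spaces, then
\[
H^n(A \otimes_{k(x)} B) \;\cong\; \bigoplus_{i+j \equiv n \pmod 2} H^i(A) \otimes_{k(x)} H^j(B),
\]
and since tensor product of nonzero vector spaces is nonzero, $A \otimes_{k(x)} B$ is exact if and only if $A$ is exact or $B$ is exact. (The $\Z/2$-graded Künneth formula reduces to the ordinary one by unrolling the two-periodic complex; the vanishing of higher $\operatorname{Tor}$ over a field makes the spectral sequence or direct chain-level argument trivial.)

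Putting these together: by Lemma~\ref{lem:istar}, a point $x$ lies in $\supp(\cP \otimes_{MF} \cQ)$ if and only if $i_x^*(\cP \otimes_{MF} \cQ) \cong i_x^*\cP \otimes_{k(x)} i_x^*\cQ$ is not exact, which by Künneth happens if and only if both $i_x^*\cP$ and $i_x^*\cQ$ are not exact, which by Lemma~\ref{lem:istar} again is equivalent to $x \in \supp\cP \cap \supp\cQ$. This gives both containments at once. The only point requiring any care is the verification of the monoidal compatibility $i_x^*(\cP \otimes_{MF} \cQ) \cong i_x^*\cP \otimes_{k(x)} i_x^*\cQ$; this is bookkeeping with the canonical isomorphisms used in Definition~\ref{def_ten_prod_mf}, and poses no conceptual difficulty once one fixes a trivialization of $i_x^*\cL$. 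The regularity of $X$ enters only through Lemma~\ref{lem:istar}.
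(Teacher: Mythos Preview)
Your proposal is correct and follows essentially the same approach as the paper: reduce to $\Z/2$-graded complexes of $k(x)$-vector spaces via Lemma~\ref{lem:istar}, use the monoidal compatibility $i_x^*(\cP \otimes_{MF} \cQ) \cong i_x^*\cP \otimes_{k(x)} i_x^*\cQ$, and then argue that the tensor product of two such complexes is exact if and only if one factor is. The only cosmetic difference is that the paper phrases the last step via formality (any $\Z/2$-graded complex $V$ over a field splits as $\cH^0(V)\oplus \cH^1(V)[1]$) rather than K\"unneth, but these are two ways of saying the same thing.
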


\begin{proof}
For any  $x \in X$ we have
\[
i_x^*( \cP \otimes_{MF)} \cQ)  \cong
i_x^*( \cP ) \otimes_{k(x)}^{\Z/2} i_x^*( \cQ),
\]
where $\otimes_{k(x)}^{\Z/2}$ denotes the tensor product for $\Z/2$-graded complexes of
$k(x)$-vector spaces (i.e., for the category $TPC(\Spec k(x), \cO)$).
For any $\Z/2$-graded complex of $k(x)$-vector spaces $V$,
we have 
$V \cong \cH^0(V) \oplus \cH^1(V)[1]$.
It follows that for a pair $V, W$ of such complexes, we have
$$
\begin{aligned}
V \otimes_{TPC} W
\cong  \,
& \left ( \cH^0(V) \otimes_k \cH^0(W) \right ) \, \oplus \,  \left ( \cH^1(V) \otimes_k
\cH^1(W) \right ) \, \oplus \\ 
& \left ( \cH^0(V) \otimes_k \cH^1(W)[1] \right ) \, \oplus \, \left (
  \cH^1(V) \otimes_k \cH^0(W)[1] \right ).
\end{aligned}
$$
In particular, $V \otimes_{TPC} W$
is exact if and only if $V$ or $W$ is  exact.
The result follows from Lemma \ref{lem:istar}.
\end{proof}

\begin{prop}
\label{supp_dual}
 Assume $X$ is a regular Noetherian separated
  scheme and $\cL$ is a line bundle on $X$. For an object $\cP$ of $TPC(X, \cL)$, there is an equality
\[ \supp(\cP) = \supp(\cP^*). \]
\end{prop}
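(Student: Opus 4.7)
The plan is to reduce the statement to a pointwise check via Lemma~\ref{lem:istar} and then exploit the fact that $k(x)$-linear duality is exact.

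First, I would observe that since $\cP = (\cP^{-1} \to \cP^0 \to \cP^{-1}(1))$ is an object of $TPC(X,\cL) = MF(X,\cL,0)$, its components are locally free coherent sheaves, and the dual $\cP^*$ (defined by the same formula as $\bE^\vee$ in Proposition~\ref{P72a}, with $W=0$) is again an object of $TPC(X,\cL)$ whose components are locally free. Since formation of the $\cO_X$-linear dual of a locally free sheaf commutes with any pullback, for each $x \in X$ with canonical map $i_x: \Spec k(x) \to X$ there is a natural isomorphism of $\Z/2$-graded complexes of $k(x)$-vector spaces
$$
i_x^*(\cP^*) \;\cong\; \bigl(i_x^*\cP\bigr)^{*},
$$
where on the right we take $k(x)$-linear duals degreewise. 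The twist by $\cL$ pulls back to a one-dimensional $k(x)$-vector space and is thus harmless for the exactness question.

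Second, the key point is that for any $\Z/2$-graded complex $V^{\bullet}$ of $k(x)$-vector spaces, the functor $\rHom_{k(x)}(-,k(x))$ is exact, so $V^\bullet$ is exact if and only if its $k(x)$-linear dual $(V^\bullet)^*$ is exact. Combining this with the isomorphism above yields that $i_x^*(\cP)$ is exact if and only if $i_x^*(\cP^*)$ is exact.

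Finally, applying Lemma~\ref{lem:istar} (which is available because $X$ is regular) to both $\cP$ and $\cP^*$, I would conclude
$$
x \in \supp(\cP) \iff i_x^*(\cP) \text{ not exact} \iff i_x^*(\cP^*) \text{ not exact} \iff x \in \supp(\cP^*),
$$
proving the desired equality. The main obstacle is really only bookkeeping: making sure that the twist by $\cL$ does not interfere with the identification $i_x^*(\cP^*) \cong (i_x^*\cP)^*$, and that this identification indeed respects the $\Z/2$-graded differential structure so that exactness transfers. Both are routine once one notes that $i_x^*\cL$ is a line over a field and that the components of $\cP$ are locally free of finite rank.
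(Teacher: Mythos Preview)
Your proposal is correct and follows essentially the same approach as the paper: both reduce to stalks via Lemma~\ref{lem:istar}, use the compatibility $i_x^*(\cP^*) \cong (i_x^*\cP)^*$, and then argue that $k(x)$-linear duality preserves exactness of $\Z/2$-graded complexes. The only cosmetic difference is that the paper justifies this last step by invoking the splitting $V \cong \cH^0(V) \oplus \cH^1(V)[1]$, whereas you invoke exactness of $\rHom_{k(x)}(-,k(x))$; both are immediate.
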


\begin{proof} 
We have the isomorphism
$$
i^*_x(\cP^*) \cong \left(i_x^*(\cP)\right)^*
$$
of $\Z/2$-graded complexes of $k(x)$-vector spaces. 
For any such complex $V$, we have
$V \cong \cH^0(V) \oplus \cH^1(V)[1]$ and 
$V^* \cong \cH^0(V)^* \oplus \cH^1(V)^*[1]$. Thus
$V$ is exact if and only if $V^*$ is exact, using Lemma \ref{lem:istar}.
\end{proof}

\section{Stable support} \label{suppsec}
In this section, returning to the context and assumptions of
\S \ref{assumptions}, we study the properties of stable support defined in
\ref{suppdef}.  In
particular we prove Theorems \ref{introthm4} and \ref{introthm5} from
the Introduction. 

Recall that the stable support set of a pair $(M,N)$ of 
complexes of $R$-modules with bounded and finitely
generated cohomology and $M$ perfect over $Q$, is defined to
be
$$
V_Q^{\vf}(M,N) =
\supp \widetilde{ \Ext {ev} R M N }
\cup
\supp \widetilde{ \Ext {odd} R M N } \subseteq \ps R.
$$
If $M$ and $N$ are both perfect over $Q$, then
it follows from \ref{stable_ext_hommf} that the stable support of $(M,N)$
may be computed as the support of a twisted-periodic complex:
$$
V_Q^{\vf}(M,N) = \supp \HomMF(\bE_M, \bE_N).
$$

In the following $Q$ is assumed to be regular so that we may use
Propositions \ref{prop:cap} and \ref{supp_dual}.
\begin{thm} \label{suppthm} 
Let $Q$ be a regular Noetherian  ring of finite Krull dimension and
let $R = Q/(f_1, \cdots, f_c)$ for a $Q$-regular sequence $f_1, \dots,
f_c$.
For complexes of $R$-modules
$M$, $N$, $M'$, and $N'$ with bounded finitely generated cohomology, we have
\begin{enumerate}
\item $V_Q^{\vf}(M,N) = \emptyset$ if
and only if $\Ext n R M N = 0$ for all $n \gg 0$,
\item $V_Q^{\vf}(M,N) \cap V_Q^{\vf}(M',N') = V_Q^{\vf}(M,N') \cap V_Q^{\vf}(M',N)$, and 
\item $V_Q^{\vf}(M,N) = V_Q^{\vf}(M,M) \cap V_Q^{\vf}(N,N) = V_Q^{\vf}(N,M)$.
\end{enumerate}
\end{thm}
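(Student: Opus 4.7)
The plan is to reduce every statement to properties of twisted periodic complexes $\HomMF(\bE_M,\bE_N)$ on $\ps Q$. Since $Q$ is assumed regular of finite Krull dimension, each of $M$, $N$, $M'$, $N'$ is automatically perfect over $Q$, so by Remark \ref{supp_rem} we have
\[
V_Q^{\vf}(M,N) \;=\; \supp\bigl(\HomMF(\bE_M,\bE_N)\bigr),
\]
and similarly for the other pairs. The three parts then become purely formal manipulations with the tensor--hom calculus developed in Section \ref{props_mf}.

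For (1), I would argue: $V_Q^{\vf}(M,N)=\emptyset$ iff the coherent sheaves $\widetilde{\Ext{ev}RMN}$ and $\widetilde{\Ext{odd}RMN}$ are zero on $\ps R$, which by the standard dictionary between graded $R[T_1,\dots,T_c]$-modules and coherent sheaves on $\ps R$ is equivalent to both $\Ext{ev}RMN$ and $\Ext{odd}RMN$ vanishing in all sufficiently high degrees; that is, $\Ext n R M N = 0$ for $n \gg 0$. (Here the finite generation over $R[T_1,\dots,T_c]$ from Gulliksen's theorem, recovered in the remark after Corollary \ref{sheafify_ext}, is what makes "sheaf zero" equivalent to "module zero in high degrees".)

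For (2), I would chain the isomorphisms and support formulas from Section \ref{props_mf}:
\[
\begin{aligned}
V_Q^{\vf}(M,N)\cap V_Q^{\vf}(M',N')
&= \supp\HomMF(\bE_M,\bE_N)\cap\supp\HomMF(\bE_{M'},\bE_{N'})\\
&= \supp\bigl(\HomMF(\bE_M,\bE_N)\otimes_{MF}\HomMF(\bE_{M'},\bE_{N'})\bigr)\\
&= \supp\bigl(\HomMF(\bE_M,\bE_{N'})\otimes_{MF}\HomMF(\bE_{M'},\bE_N)\bigr)\\
&= V_Q^{\vf}(M,N')\cap V_Q^{\vf}(M',N),
\end{aligned}
\]
using Proposition \ref{prop:cap} for the second and fourth equalities (the relevant $\HomMF$'s are matrix factorizations of $W-W=0$, i.e.\ twisted periodic complexes, so Proposition \ref{prop:cap} applies), and Proposition \ref{switch_isom} for the middle swap.

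For (3), the key observation is that $V_Q^{\vf}$ is symmetric in its two arguments: by Proposition \ref{dualten}(2) we have $\HomMF(\bE_M,\bE_N)^\vee \cong \HomMF(\bE_N,\bE_M)$, and by Proposition \ref{supp_dual} the support is invariant under dualization, so $V_Q^{\vf}(M,N) = V_Q^{\vf}(N,M)$. Specializing (2) to $M'=N$ and $N'=M$ gives $V_Q^{\vf}(M,N)\cap V_Q^{\vf}(N,M) = V_Q^{\vf}(M,M)\cap V_Q^{\vf}(N,N)$, and using symmetry the left side collapses to $V_Q^{\vf}(M,N)$. The only step that requires any genuine care is verifying that the hypotheses of Propositions \ref{prop:cap} and \ref{supp_dual} — namely that $X=\ps Q$ is regular and that we are looking at twisted periodic complexes — are met; both hold here since $Q$ is regular and each $\HomMF(\bE_M,\bE_N)$ is, by construction, a matrix factorization of $W-W=0$, i.e.\ an object of $\TP(\ps Q,\cO(1))$. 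No serious obstacle is anticipated; the proof is essentially a rearrangement of the formal identities collected in the previous section.
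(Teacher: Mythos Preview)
Your proposal is correct and follows essentially the same route as the paper: part (1) via the graded-module/sheaf dictionary and Gulliksen finiteness, part (2) via Propositions \ref{prop:cap} and \ref{switch_isom}, and part (3) by specializing (2) and invoking the duality symmetry from Propositions \ref{dualten}(2) and \ref{supp_dual}. The only cosmetic difference is that the paper also cites \ref{prop:cap} in the symmetry step, but your use of \ref{dualten}(2) together with \ref{supp_dual} already suffices.
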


\begin{proof}
The first assertions follows from the definition, using that for a
finitely generated graded
$R[T_1, \dots, T_c]$-module $E$, the associated coherent sheaf
$\widetilde{E}$ vanishes if and only if
$E_n = 0$ for all $n \gg 0$ if and only if $\supp \widetilde{E} = \emptyset$.

Since $Q$ is regular every $Q$-module has finite projective dimension.
In particular $V_Q^{\vf}(M,N) = \supp \HomMF(\bE_M, \bE_N)$ and we may
use the results of the previous section. The second assertion follows
from \ref{switch_isom} and
\ref{prop:cap}. Using the second assertion we have that $V_Q^\vf(M,M)
\cap V_Q^{\vf}(N,N) = V_Q^\vf(M,N) \cap V_Q^\vf(N,M)$. By
\ref{dualten}, \ref{prop:cap}, and
\ref{supp_dual} there is an equality $V_Q^\vf(M,N) = V_Q^\vf(N,M)$.
\end{proof}

Although Theorem \ref{suppthm} assumes $Q$ is regular, we do not make this assumption in the rest of this section.

The \emph{singular locus} of a Noetherian scheme $Z$ is
$$
\sing(Z) = \{ z \in Z \,
  | \, \text{ $\cO_{Z,x}$ is not a regular local ring} \}.
$$
Recall
that $\YQ: Y \into \ps Q$ denotes the zero subscheme of $W
= \sum_i f_iT_i \in \cO_{\ps Q}(1)$. By \cite[10.2]{1105.4698} there
is a containment $\sing (Y) \subseteq \ps R$.
\begin{lem}
\label{supp_is_in_sing}
For every pair $(M,N)$ of 
complexes of $R$-modules with bounded and finitely
generated cohomology and $M$ perfect over $Q$, there is a containment
\[ V_Q^{\vf}(M,N) \subseteq \sing(Y). \]
\end{lem}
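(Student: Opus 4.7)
The plan is to reduce the statement to a pointwise check via the stalk formula for the stable Ext sheaf, and then observe that at a regular point of $Y$ all stable Ext modules vanish because the singularity category of a regular local ring is trivial.

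First I would rewrite the stable support set in terms of stable Ext sheaves on $Y$: by Corollary \ref{suppcor} there is an equality
\[ V_Q^{\vf}(M,N) = \supp \suExt^{0}_{\cO_Y}(\cM, \cN) \cup \supp \suExt^{1}_{\cO_Y}(\cM, \cN), \]
where $\cM = \beta_* \pi^* M$ and $\cN = \beta_* \pi^* N$. So it is enough to show that each closed subset $\supp \suExt^{q}_{\cO_Y}(\cM,\cN) \subseteq Y$ is contained in $\sing(Y)$ for $q = 0, 1$.

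Next I would pass to stalks. For a point $y \in Y \setminus \sing(Y)$ the local ring $\cO_{Y,y}$ is regular of finite Krull dimension. The second isomorphism of Proposition \ref{prop:1450} gives a natural identification
\[ \suExt^q_{\cO_Y}(\cM,\cN)_y \cong \sExt^q_{\cO_{Y,y}}(\cM_y, \cN_y). \]
Since $\cO_{Y,y}$ is regular, every bounded complex of finitely generated $\cO_{Y,y}$-modules is perfect, hence $\Dsg(\cO_{Y,y}) = 0$. By Definition \ref{stable_ext_defn} this forces $\sExt^q_{\cO_{Y,y}}(\cM_y, \cN_y) = 0$ for every $q \in \Z$, and therefore $y \notin \supp \suExt^q_{\cO_Y}(\cM, \cN)$. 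This gives the desired inclusion.

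The only real obstacle — and it is a minor one — is confirming that $\cM$ actually defines an object of $\rDsg(Y \into \ps Q)$, so that Definition \ref{ssExtdef} and the stalk formula of Proposition \ref{prop:1450} are available. This follows from the hypothesis that $M$ is perfect over $Q$ together with the properness of $\beta$ and $\pi$; it is already implicit in the hypotheses of Corollary \ref{sheafify_ext} and Remark \ref{supp_rem}, so no additional argument is required beyond invoking those results.
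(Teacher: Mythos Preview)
Your proof is correct and follows essentially the same approach as the paper's: rewrite $V_Q^{\vf}(M,N)$ via Corollary \ref{suppcor} as the support of the stable $\uExt$ sheaves, use the stalk formula of Proposition \ref{prop:1450}, and observe that $\Dsg(\cO_{Y,y}) = 0$ at regular points. The only cosmetic difference is that the paper invokes just $\suExt^0$ (since $\supp \suExt^1 \subseteq \supp \suExt^0$ by the remark following Definition \ref{suppdef}), while you treat $q=0,1$ separately; both are fine.
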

\begin{proof}
By \ref{suppcor} and \ref{suppdef}, we have that $V_Q^{\vf}(M,N) =
\supp \sExt^0_{\cO_Y}( \cM, \cN)$, where $\cM = \RY_* \pi^* M$ and $\cN = \RY_* \pi^* N$ in the
notation of \S \ref{assumptions}. For any point $y \in Y$, we have 
\[\sExt^0_{\cO_Y}( \cM, \cN)_y \cong \sExt^0_{\cO_{Y,y}}(
\cM_y, \cN_y) = \rHom_{\Dsg(\cO_{Y,y})}( \cM_y, \cN_y)\]
where the first isomorphism is by \ref{prop:1450} and the second is by
Definition \ref{stable_ext_defn}. If $y \notin
\sing(Y)$, then $\Dsg(\cO_{Y,y}) = 0$ and hence $\sExt^0_{\cO_Y}( \cM,
\cN)_y  = 0$.
\end{proof}

\subsection{Relation to other notions of support}
\newcommand{\AB}{\operatorname{AB}}
\newcommand{\BIK}{\operatorname{BIK}}
\newcommand{\St}{\operatorname{St}}

In \cite{AvBu00} Avramov and Buchweitz defined a notion of support under the additional assumption that $Q$ is local. 
In that case, where for simplicity we assume the residue field $k$ of $Q$ is algebraically closed,
the
  \emph{AB-support variety} of $M$ and $N$, which we write $V_Q^{\vf}(M,N)^{\AB}$,
  is the union of the supports in $\A_{k}^c$ of the graded $k[T_1,
  \dots, T_c]$-modules $\Ext
  {ev} R M N \otimes_R k$ and $\Ext {odd} R M N \otimes_R
  k$.
It follows immediately from the definitions that 
 $V_Q^{\vf}(M,N)^{\AB}$ is the cone of the closed subset 
\[
V_Q^{\vf}(M,N)
  \times_{\Spec R} \Spec k \subseteq
\ps k.
\]
In particular, we deduce from Theorem \ref{suppthm} that 
  the analogous formulas for $V_Q^{\vf}(M,N)^{\AB}$ hold. These were
  first established
  in \cite{AvBu00}.

Benson, Iyengar, Krause defined a support for all objects in the infinite
completion $\mathsf{K}( \operatorname{Inj} R)$ of $\Db(R)$ in \cite{BIK08}. Here
$\mathsf{K}( \operatorname{Inj} R)$ is the homotopy category of
injectives; see \cite{Kr05} for further details. For $M$ an object in
$\Db(R)$, the support in the sense of \cite{BIK08}, which we
write as $V_Q^\vf(M)^{\BIK}$, may be computed as \[V_Q^\vf(M)^{\BIK} = \supp_{R[T_1, \ldots, T_c]} \Ext
{ev}  R M M \, \cup \, \supp_{R[T_1, \ldots, T_c]} \Ext
{odd}  R M M \subseteq \mathbb{A}^c_R.\]
Thus $V_Q^\vf(M,M)$ is equal to the image of $V_Q^\vf(M)^{\BIK}$ under
the canonical map $\mathbb{A}^c_R \setminus \{ 0 \} \to \ps R$.

Finally, in \cite{1105.4698}, for a
Noetherian separated scheme $Y$ with hypersurface singularities,
Stevenson defined a
support for all objects in the infinite completion
$\mathsf{K}_{\operatorname{ac}}( \operatorname{Inj} Y)$ of $\Dsg(Y)$;
here $\mathsf{K}_{\operatorname{ac}}( \operatorname{Inj} Y)$ is the
homotopy category of acyclic complexes of injective quasi-coherent
sheaves on $Y$. In
case $Y \into \ps Q$ is the zero subscheme of $W \in \cO(1)$,
where we assume that $Q$ is regular, then the support of a  coherent
sheaf $\cM$ on $Y$, which we write as $V_Q^\vf(\cM)^{\St}$, is
\[ V_Q^\vf(\cM)^{\St} = \{ \, x \in \Sing Y \, | \, \cM_x \neq 0 
\text{ and } \pd_{\cO_{Y,x}}
  \cM_x = \infty \} \subseteq \Sing Y. \]
Indeed, by \cite[8.9]{1105.4692}, support may be
computed affine locally, and then the proof of
\cite[5.12]{1105.4698} shows that the support of compact objects on affine hypersurfaces
takes the form above. By \cite[10.2]{1105.4698} there is a containment
$\Sing Y \subseteq \ps R$.

We claim that for $M$ an object of $\Db(R)$,
there is an equality
\[ V_Q^\vf(M, M) =  V_Q^\vf(\cM)^{\St}, \]
where $\cM = \RY_* \pi^*M$. Indeed, by \ref{suppcor} and the
proceeding remark we have that
\[V_Q^\vf(M, M) = \supp
\suExt^{0}_{\cO_Y}(\cM, \cM).\]
The equality of support sets now follows from the isomorphism $\suExt^{0}_{\cO_Y}(\cM,
\cM)_x \cong \sExt^0_{\cO_{Y,x}}( \cM_x, \cM_x)$ and
\cite[4.2]{AvBu00} which implies that $\cM_x$ has finite projective
dimension if and only if $\sExt^0_{\cO_{Y,x}}( \cM_x, \cM_x) = 0$.

\subsection{Realization}
The following theorem answers a question posed to us by Avramov.

\begin{thm} \label{thm_realization} 
Let $Q$ be a Noetherian  ring of finite Krull dimension and
let $R = Q/(f_1, \cdots, f_c)$ for a $Q$-regular sequence $f_1, \dots,
f_c$. For every closed subset $C$ of $Y$ that is contained in
  $\sing(Y)$ and satisfies $C \cap \sing( \ps Q) = \emptyset$, there is an $R$-module $M$ such that $M$ has finite
  projective dimension over $Q$ and satisfies $V_Q^{\vf}(M,M) = C$. In
  particular if $Q$ is regular, then the result holds for every 
  $C \subset \sing(Y)$ and, in this situation, $M$ may be chosen to be
  an MCM $R$-module. 
\end{thm}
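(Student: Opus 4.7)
\emph{Proof proposal.} The plan is to realize the given closed subset $C$ as the stable support of a matrix factorization built from $\cO_C$, then transfer across the equivalence $\Psi$ of Corollary \ref{orlovcor} to obtain the required $R$-module. Giving $C$ the reduced induced scheme structure (which is harmless since the stable support set sees only the underlying closed subset), I would view $\cO_C$ as a coherent sheaf on $\ps Q$ via the composition $C \into Y \into \ps Q$. Since $C \cap \sing(\ps Q) = \emptyset$, each stalk $\cO_{\ps Q, y}$ for $y \in C$ is a regular local ring, so $\cO_C$ has Tor-amplitude uniformly bounded by $\dim \ps Q$; combined with Serre's theorem producing surjections $\bigoplus \cO_{\ps Q}(-n)^{\oplus k} \twoheadrightarrow \cO_C$, iteration and truncation yield a bounded resolution by locally free coherent sheaves, so $\cO_C$ is perfect on $\ps Q$. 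Hence $\cO_C$ defines an object of $\rperf{Y}{\ps Q}$, and by the equivalence $\coker: [MF(\ps Q, \cO(1), W)] \xra{\cong} \rDsg(Y \into \ps Q)$ of Theorem \ref{our_equiv} there exists a matrix factorization $\bE$ with $\coker(\bE) \cong \cO_C$ in $\rDsg(Y \into \ps Q)$.

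Set $M = \Psi(\bE) \in \rDsg(Q \onto R)$; using Corollary \ref{complexes_isom_to_mod_rel_sing} (for the closed immersion $\Spec R \into \Spec Q$) I would replace $M$ by a finitely generated $R$-module of finite projective dimension over $Q$ representing the same class. To compute $V_Q^{\vf}(M, M)$, Corollary \ref{suppcor} reduces the task to describing $\supp \suExt^q_{\cO_Y}(\cM, \cM)$ for $q = 0, 1$, where $\cM = \beta_* \pi^* M$. The natural isomorphism $\cM \cong \coker(\bE) \cong \cO_C$ in $\rDsg(Y \into \ps Q)$ (established within the proof of Corollary \ref{isom_ext_global_sections_hom}) identifies $\suExt^q_{\cO_Y}(\cM, \cM)$ with $\suExt^q_{\cO_Y}(\cO_C, \cO_C)$. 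By Proposition \ref{prop:1450} the stalk at $y \in Y$ is $\sExt^q_{\cO_{Y,y}}(\cO_{C,y}, \cO_{C,y})$, which vanishes for $y \notin C$ (as $\cO_{C,y} = 0$ there), giving the containment $V_Q^{\vf}(M,M) \subseteq C$.

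For the reverse inclusion I would exploit that $V_Q^{\vf}(M,M)$ is closed in $Y$, so it suffices to show each generic point $y$ of $C$ lies in it. Since $C$ is reduced, $\cO_{C,y} = \kappa(y)$, the residue field of $\cO_{Y,y}$, and the identity of $\kappa(y)$ is nonzero in $\sExt^0_{\cO_{Y,y}}(\kappa(y), \kappa(y))$ precisely when $\kappa(y)$ has infinite projective dimension over $\cO_{Y,y}$; by the Auslander-Buchsbaum-Serre theorem this is equivalent to the non-regularity of $\cO_{Y,y}$, which holds since $y \in C \subseteq \sing(Y)$. Closedness of $V_Q^{\vf}(M,M)$ then forces $C \subseteq V_Q^{\vf}(M,M)$ and hence equality. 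For the final assertion: when $Q$ is regular, $\sing(\ps Q) = \emptyset$ so the side condition on $C$ is automatic, and since $R$ is a complete intersection and therefore Gorenstein, one may replace $M$ by a sufficiently high syzygy (which still has finite projective dimension over $Q$, using that $\pd_Q R < \infty$) to obtain an MCM $R$-module; this does not change $V_Q^{\vf}(M,M)$ because stable support is invariant under syzygies, as noted after Theorem \ref{introthm4}.

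The main obstacle I expect is establishing the reverse containment $C \subseteq V_Q^{\vf}(M,M)$, rather than merely some closed subset thereof: using the closedness of the stable support to reduce the problem to the generic points of $C$, where Auslander-Buchsbaum-Serre gives the required nonvanishing, is the key maneuver that makes the construction succeed; without this reduction one would be forced to argue at every closed point of $C$, where $\cO_{C,y}$ is not simply a residue field and its projective dimension over $\cO_{Y,y}$ is much harder to control directly.
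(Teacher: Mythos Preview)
Your proposal is correct and follows essentially the same approach as the paper's own proof: build the module from $\cO_C$ (with reduced structure) via the equivalences $\coker$ and $\Phi$, then compute the stable support by localizing the stable $\uExt$-sheaves and use the generic-point reduction together with Auslander--Buchsbaum--Serre for the reverse containment. The only cosmetic differences are that the paper cites the nonvanishing $\sExt^0_A(A/\fm,A/\fm)\ne 0$ for non-regular $A$ directly (rather than naming Auslander--Buchsbaum--Serre), and obtains the MCM representative by invoking the standard description of $\Dsg(R)$ rather than explicitly passing to a high syzygy.
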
 

\begin{rem}
This recovers a theorem of Bergh \cite{Be07} and Avramov-Iyengar
 \cite{AvIy07}. When $Q$ is regular, Theorem \ref{thm_realization}
 follows from \cite[7.11]{1105.4698} using the relation, as sketched above, of support defined in
 \emph{loc. cit.}\ to stable support.
\end{rem}

\begin{proof} Let $C$ be a
  closed subset of
  $\sing(Y)$ such that $C \cap \sing( \ps Q) = \emptyset$. Regard $\cO_C$ as a coherent sheaf on $Y$ where $C$ is
  given the reduced subscheme structure. The coherent sheaf
$\YQ_* \cO_C$
  is perfect on $\ps Q$, since for any point $x$ in $\sing( \ps Q)$, we have that $(\YQ_*
  \cO_C)_x = 0$, and for any point $x$ not in $\sing( \ps Q)$, every
finitely generated $\cO_{\ps Q, x}$-module has finite projective
dimension. 

Thus $\cO_C$ belongs to $\rDsg( Y \into \ps Q)$ and so
determines an object
$$
M \in \rDsg(Q \onto R)
$$
via the equivalences $\Phi: \rDsg(Q \onto R) \cong \rDsg(Y \into \ps
Q) $ of Theorem \ref{orlov}. Moreover, every object of $\rDsg(Q \onto
R)$ is isomorphic to an $R$-module by \ref{complexes_isom_to_mod_rel_sing}, and so
we may assume $M$ is an $R$-module. When $Q$ is regular, every object
of $\rDsg(Q \onto R) = \Dsg(R)$ is isomorphic to an MCM $R$-module and
so we may assume $M$ is an MCM $R$-module in this case. 

Let $\cM = \RY_* \pi^* M$, in the
notation of \S \ref{assumptions}. We have an isomorphism $\cO_C \cong
\cM$ in $\rDsg(Y \into \ps Q)$ since $\RY_* \pi^*$ induces an inverse
equivalence of $\Phi$. Thus we have that $\bE_{\cM} \cong
\bE_{\cO_C}$, and hence that 
\begin{equation}\HomMF( \bE_{\cM}, \bE_{\cM}) \cong
\HomMF( \bE_{\cO_C}, \bE_{\cO_C}). \label{mf_eq1}
\end{equation} We wish to show that $V_Q^\vf(M,M)
= C$. By \ref{suppcor}, \ref{stable_ext_hommf}, and \eqref{mf_eq1}, it
is enough to show that 
\[\supp \sExt^0_{\cO_Y}(\cO_C, \cO_C) = C.\] The containment
$\supp \sExt^0_{\cO_Y}( \cO_C, \cO_C) \subseteq C$ is clear. To prove
the opposite containment, let $y \in C$ be the generic point of an
irreducible component of the reduced subscheme associated to $C$,  and observe that, since $\supp \sExt^0_{\cO_Y}( \cO_C, \cO_C)$ is a
closed subset of $\ps Q$, it suffices to prove 
$y \in \supp \sExt^0_{\cO_Y}( \cO_C, \cO_C)$.
Since $C$ is reduced and $y$ is minimal, we have $\cO_{C,y} = A/\fm$ where $(A,
\fm)$ is the local ring of $Y$ at $y$. Thus
$$
\sExt^0_{\cO_Y}( \cO_C, \cO_C)_y
\cong
\sExt^0_A( A/\fm, A/\fm).
$$
We also have that $A = B/(f)$, for some local ring $B$ with $f$ a
non-zero divisor, and $\pd_B M \li$.
Since $y \in C \subset
\sing(Y)$, $A$ is not regular and hence 
$\sExt^0_A( A/\fm, A/\fm) \ne 0$.
\end{proof}

\appendix
\section{Orlov's Theorem}\label{appendix} 
In this appendix we establish a generalization of a theorem of Orlov \cite[Theorem
2.1]{MR2437083}. This generalization does not require the schemes in
question to be defined over a ground field and drops a
smoothness assumption.
Our proof follows \emph{loc.\ cit}.\ closely;
most parts of that proof work in the more general setup. 

Let $S$ be a Noetherian separated scheme of finite Krull dimension
that has enough locally free sheaves (i.e., every coherent sheaf on
$S$ is the quotient of a locally free coherent sheaf). Let $\cE$ be a vector
bundle of rank $r$ on $S$ and let $U \in \Gamma(S, \cE)$ be a regular
section. Let $j: S' \into S$ be the zero subscheme determined by $U$.

Let $q: X = \bP( \cE ) \to S$ be the projective bundle corresponding to
$\cE$ and write $\cO_X(1)$ for the associated line bundle on
$X$.
Let $W \in \Gamma( X,
\cO_X(1) )$ be the section induced by $U$ and let $u: Y \into X$ be
the zero subscheme of $W$. Also consider the locally free sheaf $j^*
\cE$ on $S'$, which is isomorphic to the normal bundle $\cN_{S'/S}$ of
the inclusion of $S'$ into $S$. Let $p: Z = \bP( j^* \cE) \to S'$ be the corresponding
projective bundle and $\cO_Z(1)$ the canonical line bundle.

The canonical map $Z \to X$ factors through $Y \to
X$ via a map $i: Z \to Y$. There is a short
exact sequence of coherent sheaves on $Z$
\begin{equation}\label{normal_bundle_ses} 0 \to \cN_{Z/Y} \to p^* j^*
  \cE \to \cO_Z(1) \to 0,
\end{equation}
where $\cN_{Z/Y}$ is the normal bundle of
$Z$ in $Y$ and $p^* j^* \cE \to \cO_Z(1)$ is the canonical map.

These constructions are summarized in the following diagram:
\[ \xymatrix{ Z = \bP( j^* \cE) \ar[r]^(.65)i \ar[d]^p & Y \ar[r]^(.38)u & X =
  \bP( \cE ) \ar[d]_q \\
S' \ar[rr]^j & & S} \]

Since $p$ is flat, $\mathbf{L} p^* = p^*$ preserves boundedness of
complexes. Since $i$ is a closed immersion, $\R
i_* =  i_*$ preserves coherence and
boundedness. It follows that $i_* p^*$ induces a functor
\[
\Phi_Z = i_* p^*: \Db( S' ) \to \Db( Y ).
\]
Here, as in the body of the paper we write $\Db(Y)$ for the bounded
derived category of quasi-coherent sheaves on $Y$ with coherent cohomology.

We recall the definitions of relatively perfect complexes and the
relative singularity category from \ref{defn_rperf}.
\begin{defn}
Let $i: Y \into X$ be a closed immersion of subschemes of finite flat
dimension.
An object $\cF$ in $\Db(Y)$ is a
\emph{relatively perfect complex} for $i$ if
$i_* \cF$ is a perfect complex on $X$. We write $\rperf Y X$ for the full
subcategory of $\Db( Y)$ whose objects are the relatively perfect
complexes for $i$.
\end{defn}

The category $\rperf Y X$ is a thick
  subcategory of $\Db(Y)$ and there is an inclusion $\Perf Y \subseteq \rperf Y
  X$. Moreover, the induced functor
\[ \rperf Y X / \Perf Y \to \Db(Y)/\Perf Y = \Dsing(Y)\]
is fully faithful. We thus identify $\rperf Y X/\Perf Y$ with the
corresponding full subcategory of $\Dsing(Y)$.
\begin{defn}
  The \emph{relative singularity category} of $Y$ in $X$ is
\[ \rDsg( Y \subseteq X ) := \rperf Y X/\Perf Y. \]
\end{defn}

\begin{thm}
\label{Orlov_thm}
Using the notation above, the functor $\Phi_Z$ induces a functor
\[ \bar{\Phi}_Z: \rDsg(S' \subseteq S) \to \rDsg(Y \subseteq X)\]
that is an equivalence of triangulated categories. 
\end{thm}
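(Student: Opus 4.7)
The plan is to follow Orlov's argument in \cite{MR2437083} closely, adapting it to the setting where the base scheme $S$ need not be regular.

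First I would verify that $\Phi_Z = i_* p^*$ descends to a well-defined functor $\bar{\Phi}_Z$. The square with vertices $Z, X, S', S$ is Cartesian and $q$ is flat, so flat base change gives a natural isomorphism $u_* i_* p^* \cong q^* j_*$ on $\Db(S')$. Hence $\cF \in \rperf{S'}{S}$ implies $u_* \Phi_Z(\cF) \cong q^* j_* \cF \in \Perf(X)$, i.e.\ $\Phi_Z(\cF) \in \rperf{Y}{X}$. The short exact sequence \eqref{normal_bundle_ses} further exhibits $i: Z \into Y$ as a regular closed embedding of codimension $r-1$, so $i_*$ locally admits a Koszul resolution by locally free sheaves and therefore sends $\Perf(Z)$ into $\Perf(Y)$; combined with the obvious fact that $p^*$ preserves perfectness this gives $\Phi_Z(\Perf(S')) \subseteq \Perf(Y)$, and the induced $\bar{\Phi}_Z$ is a triangulated functor.

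The core of the argument is then the Beilinson--Orlov semi-orthogonal decomposition
\[ \Db(X) = \langle q^* \Db(S) \otimes \cO_X(-r+1), \ldots, q^* \Db(S) \otimes \cO_X(-1), q^* \Db(S) \rangle \]
for the projective bundle $X = \bP(\cE) \to S$, and its analogue for $Z \to S'$. Tensoring with the Koszul-type resolution $0 \to \cO_X(-1) \xrightarrow{W} \cO_X \to u_* \cO_Y \to 0$ and pulling back through $u$, I would extract a semi-orthogonal decomposition of $\Db(Y)$ consisting of $r-1$ ``horizontal'' pieces of the form $u^* q^* \Db(S) \otimes \cO_Y(k)$ for $k = -r+2, \ldots, 0$, together with the image of $\Phi_Z$. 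On restricting to $\rperf{Y}{X}$ and then quotienting by $\Perf(Y)$, the horizontal pieces collapse: tracing through projection formulas, any object in these pieces whose pushforward to $X$ is perfect is already perfect on $Y$. Hence only the image of $\bar{\Phi}_Z$ survives in $\rDsg(Y \subseteq X)$, yielding essential surjectivity. Full faithfulness follows from the same semi-orthogonality, using the Koszul resolution of $u_* \cO_Y$ and flat base change to compute $\rHom_Y(\Phi_Z\cF, \Phi_Z\cG[n])$ in terms of $\rHom_{S'}(\cF, \cG[n])$ modulo contributions factoring through $\Perf$.

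The main obstacle is the non-regularity of $S$: in Orlov's original setup $\Perf(S) = \Db(S)$ forces the horizontal pieces of the $\Db(Y)$ decomposition to vanish literally in $\Dsg(Y)$, whereas in our setting one must identify \emph{precisely} which horizontal objects descend to zero in $\rDsg(Y \subseteq X)$ and match them with objects that become perfect on $X$ after pushforward. A related technical point is the use of Serre--Grothendieck duality for the smooth projective morphisms $p$ and $q$; this still goes through using the relative dualizing sheaves $\omega_{Z/S'}$ and $\omega_{X/S}$, but the computations entering the construction of the Beilinson decomposition and the $\Hom$-calculation must be redone in their relative form rather than invoking absolute dualizing complexes on $Z$ or $X$.
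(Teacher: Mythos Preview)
Your route is genuinely different from the paper's, and the sketch has a real gap at the essential-surjectivity step. The paper does \emph{not} use a semi-orthogonal decomposition of $\Db(Y)$. Instead it works with the right adjoint $\Phi_Z^* = \R p_* i^\flat$: it cites Orlov's Proposition~2.2 to show the unit $\operatorname{id} \to \Phi_Z^*\Phi_Z$ is an isomorphism (hence $\Phi_Z$ is fully faithful on $\Db$), verifies by projection-formula computations that both $\Phi_Z$ and $\Phi_Z^*$ send relatively perfect complexes to relatively perfect complexes, and then shows in a separate lemma that the induced $\bar{\Phi}_Z^*$ has trivial kernel. That kernel lemma is the substantive new input: it uses the matrix-factorization description of $\rDsg(Y\subseteq X)$ to replace $\cG$ by a sheaf with $\cG(n)\cong \cG[2n]$ in $\rDsg$, deduces that $\R p_*(i^*\cG(n))$ is perfect for \emph{all} $n$, and then invokes a perfectness criterion on projective bundles (objects on $Z$ are perfect iff all twisted pushforwards to $S'$ are perfect) plus a local commutative-algebra argument. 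Once the adjoint has trivial kernel and the other side is fully faithful, the equivalence follows formally.

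Your proposal hides exactly this difficulty. Even granting a semi-orthogonal decomposition $\Db(Y)=\langle \Phi_Z(\Db(S')),\,u^*q^*\Db(S)(k)\rangle$, you need it to restrict to $\rperf{Y}{X}$: given $\cG\in\rperf{Y}{X}$, each of its semi-orthogonal components must again be relatively perfect, and in particular the $\Phi_Z$-component must be $\Phi_Z(\cA)$ with $j_*\cA$ perfect. Extracting that component is done by applying the adjoint $\Phi_Z^*$, so you are back to proving that $\Phi_Z^*$ preserves relative perfectness and, for surjectivity, that $\cG$ and $\Phi_Z\Phi_Z^*\cG$ differ by a perfect complex---which is equivalent to the kernel of $\bar{\Phi}_Z^*$ being zero. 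Your phrase ``tracing through projection formulas'' covers the horizontal pieces once you already know they lie in $\rperf{Y}{X}$, but it does not supply the twisting-plus-perfectness-criterion argument that actually forces this. Either import the paper's kernel lemma (and hence the matrix-factorization input and the projective-bundle perfectness criterion), or give an independent proof that the semi-orthogonal decomposition restricts; as written, the proposal does neither.
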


\begin{rem}
 In \cite[Theorem 2.1]{MR2437083} $S$ is assumed to be a regular
 scheme over a field. In this case $X$ is also regular and hence there are equivalences
 $\rDsg( S' \subseteq S) \cong \Dsg(S')$ and $\rDsg( Y \subseteq X)
 \cong \Dsg(Y)$. 
\end{rem}

\begin{proof}
The functor $\Phi_Z$ has a right adjoint, written $\Phi_Z^*$,
which
is given by $\R p_* i^{\flat}$. Here $i^{\flat}$ is the right adjoint to
$\R i_*$ and is given by $\mathbb{L} i_*( - ) \otimes
\omega_{Z/Y}$, where $\omega_{Z/Y} = \wedge^{r-1} \cN_{Z/Y}[-r+1]$;
see \cite[III.7.3]{MR0222093}.
The proof of Proposition 2.2 in \cite{MR2437083} goes through verbatim to
show that the natural map $\text{id} \to \Phi_Z^* \Phi_Z$ is an
isomorphism and thus that $\Phi_Z$ is fully faithful.

Now observe that if $\cF $ is an object of  $\rperf {S'} S$ then
$\Phi_Z(\cF)$ is an object of $\rperf Y X$. Indeed, the diagram
\[ \xymatrix{ Z \ar[r]^k \ar[d]_p & X \ar[d]^q \\
S' \ar[r]^j & S} \]
where $k = u \circ i$, is a Cartesian square and $q$ is flat. Since $j_*
\cF$ is perfect on $S$, we get that $q^* j_* \cF \cong
k_* p^* \cF \cong u_* \Phi_Z$ is perfect on
$X$.

Using also that $p^*$
and $i_*$ take perfect complexes to perfect complexes, we obtain an
induced functor
\[ \bar{\Phi}_Z: \rDsg(S' \subseteq S) \to \rDsg(Y \subseteq X),
\]
and this functor is fully faithful by \cite[1.1]{MR2437083}.

We now show that 
 if $\cG$ is
in $\rperf Y X$ then
$\Phi_Z^*( \cG )$ is in $\rperf {S'} S$. We have 
\[
j_* \Phi_Z^*( \cG ) = j_* \R p_*
i^{\flat} \cG \cong \R q_* u_* i_*( \mathbb{L} i^* \cG \otimes
\omega_{Z/Y}) \cong \R q_* u_*( \cG \otimes i_* \omega_{Z/Y}) 
\]
where the last isomorphism uses the projection formula.
Using the isomorphism
$u^* u_* \cG \cong \cG$ and the projection formula again, we obtain
\[
u_*( \cG \otimes
i_* \omega_{Z/Y})\cong u_*( u^* u_* \cG \otimes i_* \omega_{Z/Y})
\cong u_* \cG \otimes u_* i_* \omega_{Z/Y}.
\]
Since $\omega_{Z/Y}$ is locally free on $Z$
 and $i$ and $u$ have finite flat dimension,
$u_* i_* \omega_{Z/Y}$ is perfect. Since we are assuming
$u_* \cG$ is
perfect, this shows that 
$u_*( \cG \otimes i_* \omega_{Z/Y})$ is also perfect. It now follows from 
\cite[III.4.8.1]{MR0354655} that
$j_* \Phi_Z^*( \cG )  \cong \R q_* u_*( \cG
 \otimes i_* \omega_{Z/Y})$ is perfect.

Since
$\Phi_Z^*$ sends objects of  $\rperf Y X$ to objects of $\rperf {S'}
S$, we obtain an induced functor
\[ 
\bar{\Phi}^*_{Z}: \rDsg(Y \subseteq X) \to \rDsg(S' \subseteq S)
\]
that is right adjoint to $\bar{\Phi}_Z$.

By Lemma \ref{kernel_is_zero} below,
$\bar{\Phi}^*_{Z}$ has trivial kernel, and the rest of the proof is a formality. We have a pair of adjoint
functors
\[ \xymatrix{ \rDsg(S' \subseteq S)\ar@/^1pc/[rrr]^{\bar{\Phi}_Z} &&& \rDsg(Y \subseteq X)
  \ar@/^1pc/[lll]^{\bar{\Phi}_{Z^*}}} \]
such that one is fully faithful and the kernel of the other is
zero, and so the pair must be mutually inverse equivalences; see, for
example the
proof of Theorem 2.1 in \cite{MR2437083}.
\end{proof}

\begin{lem}
  \label{kernel_is_zero}
For $\cG \in \rDsg(Y \subseteq X)$, if
$\bar{\Phi}_{Z^*}( \cG ) \cong 0$ then $\cG \cong 0$.
\end{lem}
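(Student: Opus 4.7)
The plan is to deduce the lemma from a ``generation'' statement via the adjunction $\bar\Phi_Z \dashv \bar\Phi_{Z^*}$: if $\bar\Phi_{Z^*}(\cG) \cong 0$, then for every $\cA \in \rDsg(S' \subseteq S)$ and every $n \in \Z$,
\[
\Hom{\rDsg(Y \subseteq X)}{(\bar\Phi_Z\cA)[n]}{\cG} \;\cong\; \Hom{\rDsg(S' \subseteq S)}{\cA[n]}{\bar\Phi_{Z^*}\cG} \;=\; 0,
\]
so the essential image of $\bar\Phi_Z$ is contained in the thick right orthogonal $\cG^{\perp}$. It therefore suffices to show that this image split-generates $\rDsg(Y \subseteq X)$, since then $\cG \in \cG^{\perp}$ gives $\id_\cG = 0$, forcing $\cG \cong 0$.

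To establish generation I would invoke a Beilinson-Orlov semi-orthogonal decomposition
\[
\Db(Y) \;=\; \bigl\langle\, i_*p^*\Db(S'),\ u^*q^*\Db(S),\ u^*q^*\Db(S)(-1),\ \ldots,\ u^*q^*\Db(S)(-(r-2))\, \bigr\rangle,
\]
whose first piece is the essential image of $\Phi_Z$ (compare \cite[Theorem 2.5]{MR2437083}). Every $\cG \in \rperf{Y}{X}$ then fits into a tower of triangles with successive cones in these pieces, and the key calculation is that any cone lying in a twisted piece $u^*q^*\Db(S)(-m)$, $0 \leq m \leq r-2$, that belongs to $\rperf{Y}{X}$ actually lies in $\Perf(Y)$ and so becomes zero in $\rDsg(Y \subseteq X)$. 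Indeed, for $\cF = u^*q^*\cB \otimes \cO_Y(-m) \in \rperf{Y}{X}$, the projection formula gives that $q^*\cB \otimes u_*\cO_Y$ is perfect on $X$; applying $\L u^*$ and using the identification $\L u^* u_*\cO_Y \cong \cO_Y \oplus \cO_Y(-1)[1]$ (which follows from the Koszul resolution $0 \to \cO_X(-1) \xra{W} \cO_X \to u_*\cO_Y \to 0$, whose differential $W$ pulls back to zero on $Y$), one concludes that $u^*q^*\cB$ is perfect on $Y$, whence $\cF \in \Perf(Y)$. On the other hand, flat base change around the Cartesian square relating $Z, Y, S', S$ identifies $\rperf{Y}{X} \cap i_*p^*\Db(S')$ with $\Phi_Z(\rperf{S'}{S})$, the essential image of $\bar\Phi_Z$. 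Combining these, every object of $\rperf{Y}{X}$ is, modulo $\Perf(Y)$, in the triangulated subcategory generated by the image of $\bar\Phi_Z$.

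The main obstacle is establishing the semi-orthogonal decomposition of $\Db(Y)$ in the generality of the appendix, where $S$ is only assumed Noetherian, separated, and of finite Krull dimension---not regular or equicharacteristic as in \cite{MR2437083}. Orlov's construction must be carried out over a general Noetherian base, and one must verify that the mutation/projection functors of the SOD preserve $\rperf{Y}{X}$, so that the SOD descends cleanly to $\rDsg(Y \subseteq X)$. Once these compatibilities are in place the rest of the argument is the formal generation-via-adjunction sketched above.
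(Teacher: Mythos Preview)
Your approach is genuinely different from the paper's, and closer in spirit to Orlov's original argument in the regular case. The paper does not invoke a semi-orthogonal decomposition of $\Db(Y)$. Instead it proves directly that any $\cG \in \rperf{Y}{X}$ with $\Phi_Z^*(\cG)$ perfect on $S'$ must itself be perfect on $Y$, as follows: replace $\cG$ by the cokernel of a matrix factorization (via \cite[6.6]{BW11a}), so that $\cG$ admits an infinite locally free right resolution and hence $\L i^*\cG \cong i^*\cG$; compute $\Phi_Z^*(\cG) \cong \R p_*(i^*\cG(-1)) \otimes \Lambda^r j^*\cE$ and use the twisted periodicity $\cG(n) \cong \cG[2n]$ in $\rDsg(Y\subseteq X)$ to see that $\R p_*(i^*\cG(n))$ is perfect for \emph{all} $n \in \Z$; apply a Quillen-regularity lemma (Lemma~\ref{perfect_iff_all_twists}) to deduce that $i^*\cG$ is perfect on $Z$; and finally reduce to a purely local statement---for $B = A/(f_1)$ and $C = B/(f_2,\ldots,f_c)$, if a $B$-module $M$ has an infinite free right resolution and $M \otimes_B C$ has finite projective dimension over $C$, then $M$ is free---which follows from \cite[0.1]{Ei80} and a change-of-rings comparison of projective dimensions.

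Your route buys a cleaner categorical picture (generation via adjunction) and avoids the somewhat ad hoc local computation at the end. The cost is exactly what you flag: one must establish the SOD of $\Db(Y)$ over a general Noetherian base with no regularity or ground-field assumption, and then check that the SOD filtration of an object of $\rperf{Y}{X}$ has all its graded pieces again in $\rperf{Y}{X}$ (so that your perfection argument for the twisted pieces actually applies to them). Neither step is automatic in this generality, and the paper's argument is designed precisely to bypass both by exploiting the matrix-factorization description of $\rDsg(Y\subseteq X)$ already available from \cite{BW11a}. A minor point: your twisted pieces should be written with $\L u^*$ rather than $u^*$, since $u$ is not flat; with that correction your perfection calculation for them goes through.
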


\begin{proof} We need to prove that if $\cG \in \rrperf(Y \into X)$ is
  such that $\Phi_Z^*(\cG)$ is perfect on $S'$, then $\cG$ is perfect
  on $Y$. 
By \cite[6.6]{BW11a} we may assume that $\cG$ is the coherent sheaf on
$Y$ given by the cokernel of an
object of $MF(X, \cO(1), W)$; see Section \ref{setup} for the
definition. In particular this implies, by \cite[5.2.1]{BW11a}, that $\cG$ has an infinite right
resolution by locally free coherent sheaves on $Y$. It follows from
this that
$\mathbb{L} i^* \cG \cong i^* \cG$ and hence $\Phi_Z^*( \cG ) \cong \R p_*(i^*
\cG \otimes \omega_{Z/Y})$.  

Using the short exact sequence
\eqref{normal_bundle_ses} and \cite[p. 139]{MR0222093} we have that
$\omega_{Z/Y} \cong \Lambda^{r} p^* j^* \cE \otimes \cO_Z(-1)$. The
projection formula 
gives 
\[ 
\R p_*(i^*
\cG \otimes \omega_{Z/Y}) \cong \R p_*( i^* \cG(-1)) \otimes
\Lambda^{r} j^* \cE,
\]
and hence $\R p_*( i^* \cG(-1))$ is perfect on $S'$, since $\Lambda^{r}
j^* \cE$ is a line bundle.
By the definition of $MF(X, \cO(1), W)$ and the equivalence $[MF(X,
\cO(1), W)] \cong \rDsg(S'
\subseteq S)$ of Theorem~\ref{our_equiv}, one sees that $\cG(n) \cong \cG[2n]$ in $\rDsg(S'
\subseteq S)$ for all $n \in \Z$. Thus $\R p_*(
i^* \cG(n))$ is perfect for all $n \in \Z$. It now follows from Lemma
\ref{perfect_iff_all_twists} below  that $i^* \cG$ is perfect on
$Z$.

Note that for any $x \in Y$, $\cG_x$ has an infinite right resolution by
free $\cO_{Y,x}$-modules. Thus we may assume that we are the following
situation: let $A$ be a commutative local ring, $f_1,
\ldots, f_c$ an $A$-regular sequence, set $B = A/(f_1), C =
B/(f_2, \ldots, f_c)$, and let $M$ be a finitely generated $B$ module with an
infinite free right resolution such that $M \otimes_B C$ has finite
projective dimension over $C$. To finish the proof it is enough to show
that under these conditions $M$ must be free. By
\cite[Lemma 0.1]{Ei80}, the $B$-regular sequence $(f_2, \ldots, f_c)$
is also $M$-regular. Thus by \cite[4.3.12]{We94} we have $\pd_B M = \pd_C (M
\otimes_B C)$. Finally, since $C$ has finite projective dimension over
$B$, one checks that the infinite free right resolution of $M$ remains
exact upon tensoring by $C$ and so $M \otimes_B C$
also has an infinite right free resolution. Thus $M \otimes_B C$ is
free, and hence so is $M$ because they have the same projective dimension.
\end{proof}

\begin{lem}{\cite[Lemma 2.6]{MR2437083}}
 \label{perfect_iff_all_twists}
 An object $\cG \in \Db( Z )$ is perfect if and only if $\R p_*(
 \cG(n))$ is perfect over $S'$ for all $n \in \Z$.
\end{lem}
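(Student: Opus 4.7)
The plan is as follows. The ``only if'' direction is standard: if $\cG$ is perfect on $Z$, then so is every twist $\cG(n) = \cG \otimes \cO_Z(n)$, and since $p\colon Z = \bP(j^*\cE) \to S'$ is smooth, proper, and of finite Tor-dimension, $\R p_*$ carries perfect complexes to perfect complexes by \cite[III.4.8.1]{MR0354655}.

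For the ``if'' direction, I would use the Beilinson resolution of the diagonal on the projective bundle $p\colon Z \to S'$, which has relative dimension $r-1$. Writing $\pi_1, \pi_2\colon Z \times_{S'} Z \to Z$ for the projections and $\Delta$ for the diagonal, there is a locally free resolution of $\Delta_*\cO_Z$ of length $r-1$ whose terms have the form
$$\pi_1^*\cO_Z(-a) \otimes \pi_2^*\Omega^a_{Z/S'}(a), \qquad a=0,\ldots,r-1.$$
This is constructed as the Koszul complex on the canonical section of $\pi_1^*\cO_Z(1) \otimes \pi_2^*T_{Z/S'}(-1)$, and works over any Noetherian base. Convolving $\cG$ with this resolution via $\cG \mapsto \R\pi_{2,*}(\pi_1^*\cG \otimes -)$ presents $\cG$ as a finite iterated extension of objects
$$p^*\bigl(\R p_*(\cG \otimes \Omega^a_{Z/S'}(a))\bigr) \otimes \cO_Z(-a), \qquad a=0,\ldots,r-1.$$

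It therefore suffices to show each $\R p_*(\cG \otimes \Omega^a_{Z/S'}(a))$ is perfect on $S'$: pulling back along the flat map $p$ and tensoring with the line bundle $\cO_Z(-a)$ preserve perfection, so $\cG$ will then be a finite iterated extension of perfect complexes on $Z$, hence perfect. To see that $\R p_*(\cG \otimes \Omega^a_{Z/S'}(a))$ is perfect, I would invoke the Euler sequence
$$0 \to \Omega^1_{Z/S'}(1) \to p^*(j^*\cE) \to \cO_Z(1) \to 0,$$
whose exterior powers give a finite Koszul-type resolution of $\Omega^a_{Z/S'}(a)$ by sheaves of the form $p^*\Lambda^b(j^*\cE) \otimes \cO_Z(a-b)$. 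Tensoring with $\cG$, applying $\R p_*$, and using the projection formula then expresses $\R p_*(\cG \otimes \Omega^a_{Z/S'}(a))$ as a finite iterated extension of objects $\R p_*(\cG(k)) \otimes \Lambda^b(j^*\cE)$; these are perfect by hypothesis, since tensoring with the locally free sheaf $\Lambda^b(j^*\cE)$ on $S'$ preserves perfection.

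The main obstacle I anticipate is simply verifying that the resolution of the diagonal is available in our generality, where $S'$ need not be regular, nor defined over a field. Fortunately the construction is purely in terms of the Euler sequence and the Koszul complex of a regular section of a locally free sheaf on the smooth morphism $Z \times_{S'} Z \to Z$, both of which are valid over an arbitrary Noetherian base; so this is a technical check rather than a substantive difficulty.
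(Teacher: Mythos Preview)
Your proof is correct, but it follows a genuinely different route from the paper's. The paper first reduces to the case where $S'$ is affine, then replaces $\cG$ by a high syzygy so as to assume $\cG$ is a coherent sheaf, and twists so that $\cG$ is Castelnuovo--Mumford $0$-regular. It then invokes Quillen's resolution \cite[8.1.11]{MR0338129}, which produces a finite resolution
\[
0 \to \cO_Z(-r+1) \otimes_R T_{r-1} \to \cdots \to \cO_Z \otimes_R T_0 \to \cG \to 0
\]
with $T_i$ certain $R$-modules, and shows inductively that each $T_i$ is perfect over $R$ using the hypothesis. Your argument instead works globally and directly in the derived category via the Beilinson resolution of the diagonal and Fourier--Mukai formalism, avoiding both the reduction to the affine case and the passage to a single coherent sheaf. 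Your approach is cleaner and more conceptual; the paper's is more elementary in that it only requires classical regularity theory rather than the Fourier--Mukai machinery. The two are of course related under the hood (Quillen's resolution is essentially what the Beilinson spectral sequence produces for a regular sheaf), but as written the arguments are distinct.
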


Since we are not working over a field, the argument given in
\emph{loc.\ cit}.\ does not apply.
\begin{proof}
If $\cG$ is perfect, then so is $\cG(n)$ for all $n$, and by \cite[III.4.8.1]{MR0354655} so is $\R p_*( \cG(n) )$.

To show the converse we may work locally and assume $S'$ is affine, so
that $p$ has the form
$p: Z = \bP^r_R \to \Spec R$.  Since $\cG$ is perfect if and only if some syzygy
in a locally free resolution of $\cG$ is, we may assume that $\cG$ is
a coherent sheaf. Moreover, since $\cG$ is
perfect if and only if $\cG(n)$ is
perfect for some $n$, we may assume that $\rH^i( Z, \cG(n) ) = 0$ for
all $i > 0$ and $n \geq 0$ by Serre's Vanishing Theorem. In particular this implies that $\cG$ is
regular and that $\R p_* \cG \cong p_* \cG$. 
(See Section \ref{sec_proj_res} where the definition is of regularity is
recalled.)

Since $\cG$ is regular, \cite[8.1.11]{MR0338129} shows that there is a resolution of $\cG$ of the form
\begin{equation}
0 \to \cO_X(-r+1) \otimes_R T_{r-1} \to \cdots \to \cO_X \otimes_R
T_0 \to \cG \to 0\label{eq:0-to-co_x}
\end{equation}
where $T_i$ are $R$-modules defined as follows. We set
$T_0$ to be $p_* \cG$, which is perfect by assumption, and define
$Z_1$ to be the kernel of the map $\cO_X(-1) \otimes_R T_0 \to
\cG$. Note that $p_*( Z_1(n) )$ is perfect for all $n$. Inductively
set $T_i = p_*( Z_{i-1}(i) )$ and $Z_i$ to be the kernel of
the canonical map $\cO_X(-i) \otimes_S T_i \to
Z_{i-1}$. By \cite[p. 132]{MR0338129} $Z_{i-1}(i)$ is regular.  Using this, it
follows from the definition and induction that each $T_i$ is perfect over $R$. Thus each term in
the resolution \eqref{eq:0-to-co_x} of $\cG$ is perfect over $Z$ and so $\cG$ must be as well.
\end{proof}

\section{Stable Ext-modules and complete resolutions}
\label{compl_resl_sec}
Here we show that if $M$ is a complex with bounded
finitely generated cohomology over a commutative Noetherian ring $A$,
and $M$ has a complete resolution in the sense of \cite{Ve06}, then
one may compute $\sExt^n_A( M , - ) := \Hom {\Dsg(A)} {\Sigma^{-n} M} -$ using the complete
resolution. This is well-known in the case $A$ is Gorenstein by
\cite{Bu87}, and is no-doubt known to the experts in this
generality. However we could not find the result in the literature.

\begin{defn}
  Let $M$ be a complex of $A$-modules with bounded and finitely
  generated cohomology. A \emph{complete resolution of $M$} is a diagram
\[ T \xra{\gamma} P \xra{\delta} M \]
where $T$ is an acyclic complex of projective $A$-modules such that $\Hom A
T A$ is also acyclic (i.e.\ $T$ is \emph{totally acyclic}), $\delta$ is a projective resolution (i.e.\ a
quasi-isomorphism such that $P$ a complex of projective $A$-modules with
$P^i = 0$ for $i \gg 0$), and $\gamma^i$ (the degree $i$ component of 
$\gamma$) is an isomorphism for $i \ll 0$.
\end{defn}

\begin{rem}
  The previous definition was made in \cite{Ve06} for arbitrary complexes; see also \cite{AvMart}.
\end{rem}

\begin{ex}
Let $R = Q/(f_1, \ldots, f_c)$ be as in \S \ref{assumptions}, and let $M$
be a finitely generated $R$-module with finite projective dimension
over $Q$, i.e.\ $M$ is in $\rDsg( Q \onto R )$. By \cite[1.2.10, 2.2.8]{Ch00} $M$ has finite G-dimension over
$R$,  and by \cite[2.4.1. 3.6]{Ve06} this implies that $M$ has a
complete resolution in the sense above.
\end{ex}

\begin{lem} \label{Lem21}
Let $M$ and $N$ be complexes of $A$-modules with bounded
  finitely generated cohomology and $f: M \to N$ a morphism of chain
  complexes such that $\cone f$ is perfect. If $T \xra{\gamma} P \xra{\delta}
  N$ is a complete resolution of $N$, then
  there exists a complete resolution $T \xra{\gamma'} P' \xra{\delta'} M$  of
  $M$ and a
  diagram 
\[ \xymatrix{ T \ar[r]^{\gamma'} \ar[d]_= & P' \ar[r]^{\delta'} \ar[d] & M \ar[d]_f \\
T \ar[r]^{\gamma} & P \ar[r]^{\delta} & N
} \]
that commutes up to homotopy.
\end{lem}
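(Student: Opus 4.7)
The plan is to construct $P'$ by splicing the totally acyclic complex $T$ onto an ordinary projective resolution of $M$ in sufficiently low degrees. The key observation is that $f$ having perfect cone forces the syzygies of $M$ and $N$ to eventually agree up to projective summands, so the low-degree part of any complete resolution of $N$ can be reused for $M$.

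First I would fix a projective resolution $\epsilon: R \to M$ with $R^i = 0$ for $i > 0$, and lift $f$ through $\delta$ to obtain a chain map $\tilde{f}: R \to P$ with $\delta \tilde{f} \simeq f \epsilon$ (using that $\delta$ is a quasi-isomorphism from a bounded-above complex of projectives). The mapping cone $\cone(\tilde{f})$ is then a projective resolution of $\cone(f)$, which is perfect by hypothesis and hence has finite projective dimension. This yields an integer $n_0 \ll 0$ for which $\cone(\tilde{f})$ is acyclic in all degrees $\le n_0$, and by taking $n_0$ smaller if necessary we may also arrange that $\gamma^i$ is an isomorphism for $i \le n_0$. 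Since an acyclic complex of projectives splits termwise into contractible pieces, a generalized Schanuel argument applied to $\cone(\tilde{f})^{\le n_0}$ yields projective modules $Q_1, Q_2$ and an isomorphism $\Omega^{-n_0+1}R \oplus Q_1 \cong \Omega^{-n_0+1}P \oplus Q_2$ that is compatible with the map induced by $\tilde{f}$.

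Using this data, I would define $P'$ by setting $(P')^i = T^i$ with the differentials of $T$ for all $i \le n_0$, by setting $(P')^i = R^i$ for $n_0 < i \le 0$, and by absorbing $Q_1$ and $Q_2$ into a single bridging degree to realize the Schanuel identification; the bridging differential is then the composition of the quotient onto the appropriate syzygy with the inclusion into $T^{n_0} = P^{n_0}$ via $\gamma^{n_0}$. Define $\epsilon': P' \to M$ to agree with $\epsilon$ on the top part and to vanish on the bridging summand; then $\epsilon'$ remains a quasi-isomorphism because we have only introduced a contractible projective summand. Take $\gamma': T \to P'$ to be the identity in degrees $\le n_0$, extended uniquely using the differentials of $T$; it is an isomorphism in all sufficiently low degrees by construction. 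Finally, the natural chain map $\phi: P' \to P$ which equals $\gamma$ in low degrees and factors $\tilde{f}$ in high degrees makes both squares of the stated diagram commute up to homotopy: $\phi \gamma' \simeq \gamma$ by construction, and $f \epsilon' \simeq \delta \phi$ follows from $f \epsilon \simeq \delta \tilde{f}$ together with the fact that the bridging summand is contractible.

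The principal obstacle is making the bridging surgery rigorous: the Schanuel isomorphism supplies the underlying modules but one must carefully assemble the differentials so that $P'$ is a genuine chain complex, so that $\gamma'$ actually extends to a chain map (rather than only a map of graded modules), and so that the homotopy witnessing $f \epsilon' \simeq \delta \phi$ can be produced from the contracting homotopy of $\cone(\tilde{f})^{\le n_0}$. This is essentially a diagram chase and is the technical heart of the proof.
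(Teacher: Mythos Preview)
Your approach is correct in outline, but it takes a genuinely different and more laborious route than the paper. The paper avoids the hand-built splice entirely: it chooses a bounded complex of finitely generated projectives $F$ with a quasi-isomorphism $F \xra{\simeq} \cone f$, lifts the composite $P \xra{\delta} N \to \cone f$ to a map $g: P \to F$, and simply sets $P' := \cone(g)[-1]$. This is automatically a bounded-above complex of projectives, it agrees with $P$ (hence with $T$ via $\gamma$) in all sufficiently low degrees because $F$ is bounded, and the five-lemma gives the quasi-isomorphism $\delta': P' \to M$. The lift $\gamma': T \to P'$ then exists for free: since $T$ is totally acyclic and $F$ is a bounded complex of projectives, $\Hom_A(T,F)$ is acyclic, so every map $T \to P$ factors through $P' \to P$ up to homotopy.

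Your Schanuel-and-splice construction reaches the same destination, but it trades that one-line invocation of total acyclicity for the ``bridging surgery'' you correctly identify as the delicate point: you must manufacture the differential across the seam, verify $P'$ is a complex, extend $\gamma'$ past degree $n_0$ as a genuine chain map (not just a graded map), and then produce the homotopy for the right-hand square by hand from the contracting homotopy of $\cone(\tilde f)^{\le n_0}$. All of this can be done, but the cone construction in the paper makes every one of these verifications automatic.
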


\begin{proof}
Consider the triangle 
\[
M \xra{f} N \to \cone f \to M[1]
\] in
$\mathsf{K}(A)$, the homotopy category of chain complexes of
$A$-modules. Let $\delta'': F \xra{\simeq} \cone f$ be a quasi-isomorphism from a
bounded complex of finitely generated projective $A$-modules, which
exists since $\cone f$ was assumed perfect. Let $g: P \to F$ be
a lifting over $\delta''$ of the composition $P \xra{\delta} N \to \cone f$. 
We can complete this to a map of triangles:
\[ \xymatrix{ \cone g[-1] \ar[d]_{\delta'} \ar[r] & P \ar[r]^g \ar[d]_\delta^\simeq & F \ar[r] \ar[d]^\simeq_{\delta''} & \cone g \ar[d] \\
M \ar[r]^f & N \ar[r] & \cone f \ar[r] & M[1]
}\]
Set $P' := \cone g[-1]$.
By construction $P'$ is a complex of projective $A$-modules
with $(P')^i = 0$ for $i \gg 0$ and $(P')^i = P^i$ for $i \ll
0$. Moreover $\delta'$ is a quasi-isomorphism since $\delta$ and
$\delta''$ are. Thus
$\delta'$ is a projective resolution of $M$.

Since $\Hom A T A$ is acyclic, and since
$F$ is a bounded complex of finitely generated projective
$A$-modules, it follows that $\Hom A T F$ is acyclic, i.e.\ there are
no maps from $T$ to $F$ in $\mathsf{K}(A)$. Thus the map
$\gamma: T \to P$ lifts, up to homotopy, to a map $\gamma': T \to
P'$. This is the complete resolution we seek.
\end{proof}

\begin{ex}
\label{mf_compl_resl_ex}
 Let $B$ be a local Noetherian commutative ring and $f
 \in B$ a
 non-zero divisor. Set $A = B/(f)$. By \cite[Theorem 1]{1101.4051} (see also Theorem
 \ref{our_equiv} above) there is a fully faithful functor
\[\coker:[MF(\Spec B, \cO_B, f)] \into \Dsg(A).\] Let $M$ be a complex of $A$-modules
 with bounded and finitely generated cohomology, and assume that $\bE =
 (B^n \xra{\phi} B^n \xra{\psi} B^n)$
 is a matrix factorization of $f$ such that $\coker \bE \cong M \in
 \Dsg(A)$. Set
 $$
T := \cdots \to A^n \xra{\phi \otimes_B A} A^n \xra{\psi \otimes_B
   A} A^n \to \cdots.
$$
It is clear that $T \to T^{\leq 0} \to \coker \bE$
 is a complete resolution. By Lemma \ref{Lem21}  there exists some projective resolution $P
 \to M$ which fits into a complete
 resolution $T \to P \to M$.
\end{ex}

When $A$ is Gorenstein the following is contained in \cite[6.1.2]{Bu87}.
\begin{lem}
\label{stable_ext_by_compr}
Let $A$ be a commutative Noetherian ring and let $M$ be an $A$-complex
with bounded finitely generated cohomology and a complete resolution $T
\xrightarrow{\gamma} P \to
M.$ Let $N$ be any $A$-complex with bounded finitely
generated cohomology. There is an isomorphism, natural in $N$,
\[ \eta^q_M : \rH^q \Hom A T N \xra{\cong} \sExt^q_A( M , N ). \]
Moreover, there is a commutative diagram
\[ \xymatrix{ \rH^q \Hom A P N \ar[rr]_{\rH^q \Hom A \gamma - } \ar[d]_\cong && 
  \ar[d]_\cong^{\eta^q} \rH^q \Hom A T N\\
\ar[rr] \Ext q A M N&& \sExt_A^q(M, N )
} \]
where the lower horizontal map is \eqref{eq:normal_to_stable}.
\end{lem}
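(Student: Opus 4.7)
My plan is to construct $\eta^q_M$ in two stages: first for $q \gg 0$ via a long exact sequence argument, and then extend to arbitrary $q$ via a direct roof construction.

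For $q \gg 0$, I will apply $\Hom A {-} N$ to the triangle $T \xra{\gamma} P \to \cone(\gamma) \to T[1]$ in $K(A)$, yielding a long exact sequence relating the cohomologies of $\Hom A T N$, $\Hom A P N$ (which computes $\Ext{q}{A}{M}{N}$), and $\Hom A {\cone(\gamma)} N$. The key structural claim is that $\rH^q \Hom A {\cone(\gamma)} N = 0$ for $q \gg 0$. To see this, apply the $3 \times 3$-lemma to the short exact sequences $0 \to \sigma^{<m}T \to T \to T/\sigma^{<m}T \to 0$ and the analogous one for $P$, together with the fact that $\gamma$ restricts to an isomorphism $\sigma^{<m}T \xra{\cong} \sigma^{<m}P$ for $m \ll 0$; this gives $\cone(\gamma) \simeq \cone(\bar\gamma)$ in $K(A)$, where $\bar\gamma \colon T/\sigma^{<m}T \to P/\sigma^{<m}P$, and $\cone(\bar\gamma)$ is bounded below by $m - 1$. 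Consequently $\Hom A {\cone(\bar\gamma)} N$ vanishes in all cohomological degrees above an explicit bound depending on $m$ and the amplitude of $N$. This yields $\gamma^* \colon \Ext{q}{A}{M}{N} \xra{\cong} \rH^q \Hom A T N$ for $q \gg 0$. Combined with the standard fact that $\Ext{q}{A}{M}{N} \to \sExt^q_A(M,N)$ is an isomorphism for $q \gg 0$ (because roofs in $\Dsg(A)$ with perfect cones contribute trivially to hom groups in high cohomological degree), this defines $\eta^q_M$ as an isomorphism in this range, natural in $N$, fitting the commutative diagram.

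To extend to arbitrary $q$, I proceed by direct construction. Given $\alpha \colon T \to N[q]$, I choose $m \ll 0$ with both $\alpha^i = 0$ for $i < m$ (possible since $N[q]$ is bounded below) and $\gamma^i$ an isomorphism for $i \leq m$. Then $\alpha$ factors as $T \onto \tau \xra{\bar\alpha} N[q]$ where $\tau := T/\sigma^{<m}T$ has cohomology concentrated in a single degree (equal to $Z^m T$), so $\tau \in \Db(A)$. Using $\bar\gamma \colon \tau \to \tau_P := P/\sigma^{<m}P$, whose cone is $\cone(\bar\gamma) \simeq \cone(\gamma)$ by the previous paragraph, I transport $\bar\alpha$ to a morphism in $\Hom_{\Dsg(A)}(M, N[q]) = \sExt^q_A(M,N)$, via the roof description of $\Dsg(A)$ and the identification of $\tau_P$ with $M$ up to a perfect piece. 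Independence of the construction from $m$ follows by comparing compatible truncations.

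The main obstacle I anticipate is reconciling the direct construction with the high-degree version and verifying well-definedness: that $\eta^q_M$ is independent of $m$ (which reduces to the observation that for $m_1 < m_2$, the inclusion $\sigma^{<m_1} \subseteq \sigma^{<m_2}$ restricts $\gamma$ to isomorphisms on both, so the two truncations differ by a perfect complex), and that $\eta^q_M$ is an isomorphism (which I verify by constructing an inverse: any morphism $M \to N[q]$ in $\Dsg(A)$ is represented by a roof that can be pulled back along $T \onto \tau$ to a chain map on $T$). Commutativity of the diagram then reduces to showing that for $\alpha = \beta \circ \gamma$ with $\beta \colon P \to N[q]$, the construction recovers the canonical image of $[\beta]$ under $\Ext{q}{A}{M}{N} \to \sExt^q_A(M,N)$.
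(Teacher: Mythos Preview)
Your first stage, establishing the isomorphism for $q \gg 0$ via the triangle $T \to P \to \cone(\gamma)$ and the boundedness of $\cone(\gamma)$, is sound. However, the second stage contains a genuine error in the direction of truncation. You set $\tau_P = P/\sigma^{<m}P = P^{\geq m}$ for $m \ll 0$; since $P$ is bounded above, $\tau_P$ is a \emph{bounded} complex of finitely generated projectives, hence perfect, hence zero in $\Dsg(A)$. Your claimed ``identification of $\tau_P$ with $M$ up to a perfect piece'' therefore fails: the cone of the surjection $P \to \tau_P$ is $\sigma^{<m}P[1]$, which is an unbounded-below complex of projectives with bounded cohomology (quasi-isomorphic to a shift of $Z^m(T)$), and this is \emph{not} perfect unless $M$ has finite projective dimension. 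Consequently the roof you build collapses to the zero morphism in $\Dsg(A)$ and carries no information.

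The paper repairs this by truncating from \emph{above}: one takes $T^{\leq k}$ for $k$ large enough that $P^i = N^i = 0$ for $i > k$, so that $\gamma$ factors as $T \to T^{\leq k} \xra{\epsilon} P$. Now $T^{\leq k}$ is bounded above but not below, hence not perfect, and $\cone(\epsilon)$ is bounded (it agrees with $\cone(\gamma)$ in degrees $\leq k$ and vanishes above), hence perfect. The roof $P \xla{\epsilon} T^{\leq k} \xra{f} N$ then directly represents a morphism $M \to N$ in $\Dsg(A)$. For injectivity and surjectivity the paper invokes Lemma~\ref{Lem21}, which shows that any map into $M$ with perfect cone can be covered by a complete resolution sharing the same $T$; this is the mechanism that lets one compare an arbitrary roof in $\Dsg(A)$ with a map out of some $T^{\leq l}$. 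Your sketch of the inverse (``any roof can be pulled back along $T \onto \tau$'') does not supply this ingredient, and it is precisely here that the argument requires work beyond formal manipulation of truncations.
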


\begin{proof}
  First note that we may assume
  that $q = 0$. We view the bounded derived category of $A$-modules as the homotopy
  category of complexes of finitely generated projective modules
  with bounded cohomology. Thus we replace $M$ by $P$ and we may
  assume that $N$ is a complex of finitely generated projective
  $A$-modules. Fix an integer $k$ such that
  $P^i = 0 = N^i$ for $i \geq k$. 

There is a natural map $T \to T^{\leq k}$, and the map $\gamma: T \to
P$ factors as
\[ \xymatrix{ T \ar[dr]^{\gamma} \ar[d] & \\
T^{\leq k} \ar[r]^\epsilon & P} \]
Note also that the cone of $\epsilon$ is perfect. 
In the remainder of this proof, we use $\simeq$ to denote a 
morphism whose cone is perfect. For example, we have $\epsilon:
T^{\leq k} \xra{\simeq} P$.

The natural map $T \to T^{\leq k}$ induces an isomorphism $\Hom A
{T^{\leq k}} N \xra{\cong} \Hom A T N$. Given an element $f$ in $Z^0 \Hom A
{T^{\leq k}} N$, we send it to the element of $\Hom {\Dsg(A)} P N$
represented by $P \xla{\epsilon}   T^{\leq k} \xra{f} N$.
If $f$ is null-homotopic
then this diagram represents the zero morphism. Thus we have a
well-defined map
\[ \eta^0_M: \rH^0 \Hom A T N \cong \rH^0 \Hom A
{T^{\leq k}} N \to \Hom {\Dsg(A)} P N = \sExt^0_A( M , N ).\] This map is independent of
the choice of $k$ and thus is functorial in $N$.

We now show that the map fits into the commutative diagram
above. Given an element $h \in \rH^0 \Hom A P N \cong \Ext 0 A P N$,
its image in $\Hom {\Dsg(A)} P N$ is represented by $P \xla{=} P
\xra{h} N$. In the other direction, it is sent to $h \circ \gamma$ in
$\rH^0\Hom A T N$. By definition $\eta^0$ sends this to $P \xla{\epsilon}
T^{\leq k} \xra{ h \circ \epsilon} N$.
The diagram
\[ \xymatrix{ & P \ar[dl]_= \ar[dr]^h & \\
P & & N \\
& T^{\leq k} \ar[ul]_{\epsilon} \ar[uu]_{\epsilon} \ar[ur]_{h \circ \epsilon
  }
} \]
shows that the diagram commutes.

We now show that $\eta^0$ is an isomorphism. Suppose that
$\eta(f) = 0$. Then there exists a complex of projective modules $X$ and a diagram
\[ \xymatrix{ & T^{\leq k} \ar[dl]_\epsilon^{\simeq} \ar[dr]^f & \\
P & & N \\
& X \ar[ul]_{\simeq} \ar[uu]_{\simeq} \ar[ur]_{g \sim 0}
} \]
in the homotopy category. Clearly $T \to T^{\leq k} = T^{\leq k}$ is a complete
resolution. By the previous lemma, there exists a complete resolution $T \to
X' \to X$ making the following diagram commute:
\[ \xymatrix{ T \ar[r] & T^{\leq k} \ar[r]^= & T^{\leq k} \ar[dr]^f & \\
T \ar[u]^= \ar[r] & X' \ar[r] \ar[u] & X \ar[u]^\simeq \ar[r]_{g \sim 0} & N
} \]
In other words we have a commutative diagram
\[ \xymatrix{ T \ar[r] \ar[dr]_{\sim 0} & T^{\leq k} \ar[d]^f \\
& N } \]
This shows that $f$ is null-homotopic and so $\eta$ is injective.

Now let $P \xla{\simeq} X \to N$ be any morphism from $P$ to $N$ in
$\Dsg(A)$. By the previous lemma there exists a complete
resolution $T \to X' \to X$ and a commutative diagram
\[ \xymatrix{ T \ar[r] \ar[d]_= & X' \ar[r] \ar[d] & X \ar[d]_\simeq \\
T \ar[r] & P \ar[r]_= & P
} \]
{}From this, one sees that there exists an integer $l$ and a commutative
diagram
\[ \xymatrix{ & X \ar[dr] \ar[dl]_\simeq & \\
P & & N \\
& T^{\leq l} \ar[uu] \ar[ul]^\simeq \ar[ur]
} \]
which shows that $P \xla{\simeq} X \to N$ is in the image of $\eta$.
\end{proof}


\bibliographystyle{plain}
\renewcommand{\baselinestretch}{1.1}
\renewcommand{\MR}[1]{%
  {\href{http://www.ams.org/mathscinet-getitem?mr=#1}{MR #1}}}
\providecommand{\bysame}{\leavevmode\hbox to3em{\hrulefill}\thinspace}
\newcommand{\arXiv}[1]{%
  \relax\ifhmode\unskip\space\fi\href{http://arxiv.org/abs/#1}{arXiv:#1}}


\end{document}